\definecolor{darkblue}{rgb}{0.0,0,0.7}
\newcommand{\darkblue}{\color{darkblue}}
\definecolor{darkred}{rgb}{0.68,0,0}
\newcommand{\darkred}{\color{darkred}}
\definecolor{darkgreen}{rgb}{0,.38,0}
\newcommand{\darkgreen}{\color{darkgreen}}
\newcommand{\defn}[1]{\emph{\darkblue #1}}
\newcommand{\defna}[1]{\emph{\darkred #1}}
\newcommand{\defnb}[1]{\emph{\darkblue #1}}
\newcommand{\defng}[1]{\emph{\darkgreen #1}}
\setlist[enumerate]{
	label=\textnormal{({\roman*})},
	ref={\roman*}}
\def\th@plain{%
	\thm@notefont{}% same as heading font
	\itshape % body font
}
\def\th@definition{%
	\thm@notefont{}% same as heading font
	\normalfont % body font
}
\def\fdsy@scale{1}
\newcommand\fdsy@mweight@normal{Book}
\newcommand\fdsy@mweight@small{Book}
\newcommand\fdsy@bweight@normal{Medium}
\newcommand\fdsy@bweight@small{Medium}
\DeclareFontFamily{U}{FdSymbolB}{}
\DeclareFontShape{U}{FdSymbolB}{m}{n}{
	<-7.1> s * [\fdsy@scale] FdSymbolB-\fdsy@mweight@small
	<7.1-> s * [\fdsy@scale] FdSymbolB-\fdsy@mweight@normal
}{}
\DeclareFontShape{U}{FdSymbolB}{b}{n}{
	<-7.1> s * [\fdsy@scale] FdSymbolB-\fdsy@bweight@small
	<7.1-> s * [\fdsy@scale] FdSymbolB-\fdsy@bweight@normal
}{}
\DeclareSymbolFont{fdrelations}{U}{FdSymbolB}{m}{n}
\DeclareMathSymbol{\lescc}{\mathrel}{fdrelations}{66}
\newtheorem{thm}{Theorem}[section]
\newtheorem{lemma}[thm]{Lemma}
\newtheorem{cor}[thm]{Corollary}
\newtheorem{prop}[thm]{Proposition}
\newtheorem{conj}[thm]{Conjecture}
\theoremstyle{definition}
\newtheorem{ex}[thm]{Example}
\newtheorem{rem}[thm]{Remark}
\numberwithin{figure}{section}
\numberwithin{equation}{section}
\def\emp{\nothing}
\def\nn{\mathbb N}
\def\rr{\mathbb R}
\def\sm{\smallsetminus}
\def\al{\alpha}
\def\ve{\varepsilon}
\def\cF{\mathcal F}
\def\cL{U}
\def\cS{\mathcal S}
\def\<{\langle}
\def\>{\rangle}
\def\Ups{\Upsilon}
\def\0{{\mathbf 0}}
\def\nothing{\varnothing}
\def\.{\hskip.06cm}
\def\ts{\hskip.03cm}
\def\lra{\leftrightarrow}
\def\di{{\small{\ts\diamond\ts}}}
\def\nin{\noindent}
\def\SP{{\textsc{\#P}}}
\def\SP{{\textsc{\#{}P}}}
\def\aF{\textrm{F}}
\def\aFr{\textrm{\em F}}
\def\aN{\textrm{N}}
\def\aNr{\textrm{\em N}}
\def\xx{\textbf{\textit{x}}}
\def\yy{\textbf{\textit{y}}}
\def\RG{\textrm{G}}
\DeclareMathOperator{\Ec}{\mathcal{E}} %The set of all linear extensions
\DeclareMathOperator{\Nc}{\mathcal{N}}
\DeclareMathOperator{\Rb}{\mathbb{R}}
\DeclareMathOperator{\vb}{\mathbf{v}}
\DeclareMathOperator{\wb}{\mathbf{w}}
\DeclareMathOperator{\xb}{\mathbf{x}}
\DeclareMathOperator{\yb}{\mathbf{y}}
\newcommand{\iA}{\textnormal{A}} %% Macro for convex bodies
\newcommand{\iB}{\textnormal{B}} %% Macro for convex bodies
\newcommand{\iC}{\textnormal{C}} %% Macro for convex bodies
\newcommand{\iK}{\textnormal{K}} %% Macro for convex bodies
\newcommand{\iQ}{\textnormal{Q}} %% Macro for convex bodies
\newcommand{\ibQ}{{\textnormal{\bf Q}}} %% Macro for convex bodies
\newcommand{\iV}{\textnormal{V}} %% Mixed volumes
\newcommand{\Vol}{\textnormal{Vol}} %% Normal volume
\title[On the cross-product conjecture for the number of linear extensions]{On the cross-product conjecture for
\\ the number of linear extensions}
\date{\today}
\author[Swee Hong Chan \and Igor Pak \and Greta Panova]{\ Swee Hong Chan$^{\star}$ \ \. \and \ \. Igor~Pak$^{\di}$ \ \. \and \ \. Greta~Panova$^{\natural}$}
\thanks{\thinspace ${\hspace{-.45ex}}^\star$Department of Mathematics,
Rutgers University, Piscataway, NJ, 08854.
 \.  Email: \ts \texttt{sweehong.chan@rutgers.edu}}
\thanks{\thinspace ${\hspace{-.99ex}}^\di$Department of Mathematics,
UCLA, Los Angeles, CA, 90095. \.  Email: \ts \texttt{{pak}@math.ucla.edu}}
\thanks{\thinspace ${\hspace{-0.45ex}}^\natural$Department of Mathematics, USC, Los Angeles, CA 90089. \. Email: \ts \texttt{{gpanova}@usc.edu}}
\begin{document}

\begin{abstract}
We prove a weak version of the \emph{cross--product conjecture}:  \.
$\aF(k+1,\ell) \. \aF(k,\ell+1) \ge (\frac12+\ve) \. \aF(k,\ell) \. \aF(k+1,\ell+1)$,
where \ts $\aF(k,\ell)$ \ts is the number of linear extensions for which the
values at fixed elements $x,y,z$ are $k$ and $\ell$ apart, respectively, and where
\ts $\ve>0$ \ts depends on the poset.  We also prove the converse inequality and
disprove the \emph{generalized cross--product conjecture}.  The proofs use
geometric inequalities for mixed volumes and combinatorics of words.
\end{abstract}

	\maketitle

%\vskip-3.cm

\section{Introduction} \label{s:intro}

% We often wonder what happens when we prove a special case of a conjecture.
% Are we moving in the right direction and advancing towards the truth?
% But if our tools are much too weak to resolve the conjecture, is this
% step even meaningful?  On the other hand, if the conjecture is false,
% aren't we moving against the tide?  Unable to answer any of
% these questions, we keep wondering.
%
%
%\smallskip

This paper is centered around the \defna{cross--product conjecture} (CPP) by
Brightwell, Felsner and Trotter that gives the best known bound for the
celebrated \. \defng{$\frac{1}{3}$--$\frac{2}{3}$ \. Conjecture} \ts \cite[Thm~1.3]{BFT}.
Here we prove several weak versions of the conjecture,
and disprove a stronger version we conjectured earlier in \cite{CPP1}.

\smallskip

Let \ts $P=(X,\prec)$ \ts be a poset with \ts $|X|=n$ \ts elements.
A \defn{linear extension} of $P$ is a bijection \. $L: X \to [n]=\{1,\ldots,n\}$,
such that
\. $L(x) < L(y)$ \. for all \. $x \prec y$.
Denote by \ts $\Ec(P)$ \ts the set of linear extensions of $P$.
% and let \. $e(P):=|\Ec(P)|$.
%
Fix distinct elements \. $x,y,z\in X$.
For $k,\ell \geq 1$, let
\[
\cF(k,\ell) \ := \ \big\{L \in \Ec(P) \, : \, L(y)-L(x)=k, \.  L(z)-L(y)=\ell \big\},
\]
and let \. $\aF(k,\ell) \. := \. \big|\cF(k,\ell)\big|$.

\smallskip

\begin{conj}[{\rm \defng{Cross--product conjecture}~\cite[Conj.~3.1]{BFT}}{}]\label{conj:CPC}
We have:
\begin{equation}\label{eq:CPC} \tag{CPC}
\aFr(k+1,\ell) \, \aFr(k,\ell+1) \ \ge \ \aFr(k,\ell) \, \aFr(k+1,\ell+1) \ts.
\end{equation}
\end{conj}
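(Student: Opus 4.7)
The plan is to prove CPC by constructing a concrete injection
\[
\Phi \colon \cF(k,\ell) \times \cF(k+1,\ell+1) \;\hookrightarrow\; \cF(k+1,\ell) \times \cF(k,\ell+1).
\]
For a linear extension $L$, let $A(L), B(L), C(L), D(L) \subset X$ denote the sets of elements placed, respectively, before $x$, strictly between $x$ and $y$, strictly between $y$ and $z$, and after $z$. Then $|B(L)|$ and $|C(L)|$ control the two gap statistics, and the counts on the source and target of $\Phi$ differ by exactly one unit in $|B|$ while leaving $|C|$ invariant: the injection must enlarge $B(L_1)$ by one and shrink $B(L_2)$ by one, with neither $(y,z)$-interval changing.

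The construction I would attempt is as follows. Because $|B(L_2)| = k$ exceeds $|B(L_1)| = k-1$, there is an element $u \in B(L_2) \setminus B(L_1)$, and I would canonically pick $u$ as the $\prec$-minimal such element subject to the additional constraint $u \notin C(L_1)$. I would then define $L_1'$ by excising $u$ from its current position in $L_1$ and re-inserting it into $B(L_1)$ at the position dictated by the relative order that $L_2$ assigns to $B(L_2) \cup \{x,y\}$, and $L_2'$ analogously by excising $u$ from $B(L_2)$ and re-inserting it at the position of $u$ in $L_1$. Injectivity follows by a matching rule on the output: given $(L_1', L_2')$, the distinguished element $u$ is recovered as the $\prec$-minimal member of $B(L_1') \setminus B(L_2')$ satisfying the dual constraint, so $\Phi$ is invertible on its image.

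The principal obstacle is two-fold. First, a $u$ satisfying $u \in B(L_2) \setminus \bigl(B(L_1) \cup C(L_1)\bigr)$ may fail to exist when $B(L_2) \subseteq B(L_1) \cup C(L_1)$; this resonant configuration is exactly what prevents the paper's combinatorics-of-words technique from going beyond the constant $\tfrac{1}{2}+\ve$. Second, the re-insertion of $u$ into $B(L_1)$ must respect the covering relations of $P$, which may force one to replace the direct insertion by a sequence of adjacent transpositions of $\prec$-incomparable elements. The crux of the plan is therefore a supplementary injection for the resonant case: I would seek a dual construction using an element of $C(L_2) \setminus C(L_1)$ that re-purposes $(y,z)$-slack to compensate, and then fuse the two injections into a single map whose fibres remain disjoint. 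Assembling these pieces compatibly, so that together they cover every pair in the source exactly once, is where the work of closing the factor-of-two gap concentrates; given the paper's disproof of the generalized CPC, this step must crucially exploit the single-chain order $x, y, z$ in the gap definition and cannot survive any symmetric strengthening.
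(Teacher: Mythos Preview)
The statement you are attempting to prove is a \emph{conjecture}, not a theorem: the paper does not prove \eqref{eq:CPC} in general, and explicitly presents it as open. What the paper establishes are the weak versions in Theorem~\ref{t:main} (constant $\tfrac12+\ve$ with $\ve$ depending on $n,k,\ell$), via mixed-volume inequalities applied to slices of the order polytope, together with combinatorial ratio bounds. There is therefore no ``paper's own proof'' of \eqref{eq:CPC} to compare your proposal against.

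Your proposal is not a proof but a plan, and you correctly identify where it breaks. The existence of $u\in B(L_2)\setminus\bigl(B(L_1)\cup C(L_1)\bigr)$ can genuinely fail, and your suggested remedy --- a ``dual construction'' borrowing an element from $C(L_2)\setminus C(L_1)$ --- does not work as stated: such a move alters the $(y,z)$-gap rather than the $(x,y)$-gap, so it does not land in $\cF(k+1,\ell)\times\cF(k,\ell+1)$ without a further compensating swap, and fusing the two partial injections so that their images are disjoint is exactly the unresolved difficulty. Moreover, your injectivity argument (``recover $u$ as the $\prec$-minimal member of $B(L_1')\setminus B(L_2')$'') presumes that the canonical choice of $u$ on the source side matches the canonical choice on the target side, which is not automatic once re-insertion reshuffles the relative orders. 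In short, the obstacles you flag are real and are precisely why \eqref{eq:CPC} remains open; the paper's contribution is to get past $\tfrac12$ by an entirely different (geometric) route rather than by completing any such bijective scheme.
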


\smallskip

The CPC was proved in \cite[Thm~3.2]{BFT} for \ts $k=\ell=1$, and in
\cite[Thm~1.4]{CPP1} for posets of width two.  We also show in  \cite[$\S$3]{CPP1},
that both the \defng{Kahn--Saks} \ts and the \defng{Graham--Yao--Yao inequalities} \ts
follow from \eqref{eq:CPC}.
%and the \defng{XYZ inequalities}.
%
%Our first main result is the following weak version of the \eqref{eq:CPC}.

\smallskip

\begin{thm}[{\rm Main theorem}{}] \label{t:main}
Let \ts $P=(X,\prec)$ \ts be a poset on \ts $|X|=n$ elements.
Fix distinct elements \ts $x,y,z\in X$.
Suppose that \. $\aFr(k,\ell+2)\.\aFr(k+2,\ell) > 0$. Then:
\begin{equation}\label{eq:main-thm-1}
{\aFr(k+1,\ell) \, \aFr(k,\ell+1)} \, \geq \,  \left( \tfrac12 \, + \, \tfrac{1}{4\ts n \ts \sqrt{k\ts \ell}}\right) \. \aFr(k,\ell) \, \aFr(k+1,\ell+1).
\end{equation}
Suppose that \. $\aFr(k,\ell+2)=0$ \. and \. $\aFr(k+2,\ell) > 0$.  Then:
\begin{equation}\label{eq:main-thm-2}
{\aFr(k+1,\ell)\, \aFr(k,\ell+1)} \, \geq \, \, \left( \tfrac12  \, + \,  \tfrac{1}{16 \ts n \ts k \ts \ell^2} \right) \. \aFr(k,\ell) \, \aFr(k+1,\ell+1).
\end{equation}
Suppose that \. $\aFr(k+2,\ell)=0$ \. and \. $\aFr(k,\ell+2) > 0$.  Then:
\begin{equation}\label{eq:main-thm-2-swap}
	{\aFr(k+1,\ell)\, \aFr(k,\ell+1)} \, \geq \, \left( \tfrac12  \, + \,  \tfrac{1}{16 \ts n \ts k^2 \ts \ell} \right) \. \aFr(k,\ell) \, \aFr(k+1,\ell+1).
\end{equation}
Finally, suppose that \. $\aFr(k,\ell+2) \ts = \ts \aFr(k+2,\ell) = 0$ \. and \. $\aFr(k,\ell)\. \aFr(k+1,\ell+1)>0$.  Then:
\begin{equation}\label{eq:main-thm-3}
{\aFr(k+1,\ell)\, \aFr(k,\ell+1)} \, = \, {\aFr(k,\ell)\,\aFr(k+1,\ell+1)} \, .
\end{equation}
\end{thm}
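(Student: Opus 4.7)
My plan is to realize each $\aFr(k,\ell)$ as a mixed volume of convex bodies derived from the order polytope $\Oc(P)$, extract the baseline $\tfrac{1}{2}$-factor version of CPC via Alexandrov--Fenchel-type inequalities, and obtain the $+\varepsilon$ correction from a swap-type bijection on linear extensions encoded as words.

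For the geometric setup, I would follow the Stanley, Kahn--Saks, and Graham--Yao--Yao scheme: for fixed values of $L(x)$ and $L(z)$, integrate the order polytope $\Oc(P)$ against the slab condition $f(y)-f(x) \in [k/n,(k{+}1)/n]$, $f(z)-f(y) \in [\ell/n,(\ell{+}1)/n]$, and realize $\aFr(k,\ell)$ as a mixed volume of a fixed $(n{-}3)$-dimensional polytope with $k{-}1$ copies of a body $A$ and $\ell{-}1$ copies of a body $B$, where $A$ and $B$ depend on $P$ and $\{x,y,z\}$ but not on $(k,\ell)$. Several applications of Alexandrov--Fenchel to the $A$- and $B$-slots give log-concavity relations such as
\[
\aFr(k{+}1,\ell)^2 \, \geq \, \aFr(k{+}2,\ell{-}1)\.\aFr(k,\ell{+}1) \quad\text{and}\quad \aFr(k{+}1,\ell{+}1)^2 \, \geq \, \aFr(k{+}2,\ell)\.\aFr(k,\ell{+}2),
\]
which when combined by AM--GM and Cauchy--Schwarz should yield the $\tfrac{1}{2}$-factor CPC
\[
\aFr(k{+}1,\ell)\.\aFr(k,\ell{+}1) \ \geq \ \tfrac{1}{2}\.\aFr(k,\ell)\.\aFr(k{+}1,\ell{+}1),
\]
plus a positive remainder proportional to $\sqrt{\aFr(k{+}2,\ell)\.\aFr(k,\ell{+}2)}$. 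The technical hurdle is identifying the correct combination of AF inequalities so that the remainder has precisely the shape needed to produce the $\varepsilon$ in \eqref{eq:main-thm-1}; I would test it first against the known width-two case of \cite[Thm~1.4]{CPP1} and the base case $k=\ell=1$ of \cite[Thm~3.2]{BFT}.

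To pin down the explicit constants $\tfrac{1}{4n\sqrt{k\ell}}$, $\tfrac{1}{16nk\ell^2}$, and $\tfrac{1}{16nk^2\ell}$, I would combine the geometric inequality above with the Kahn--Saks-type log-concavities $\aFr(k,\ell)\.\aFr(k{+}2,\ell) \leq \aFr(k{+}1,\ell)^2$ and its $\ell$-analogue, together with trivial bounds like $\aFr(k,\ell) \leq n\.\aFr(k{+}1,\ell{+}1)$ controlling the ratio of neighboring values. In the generic case this should produce a lower bound on the remainder of order $\aFr(k,\ell)\.\aFr(k{+}1,\ell{+}1)/(n\sqrt{k\ell})$, yielding the claimed $1/(4n\sqrt{k\ell})$ gain; the asymmetric degenerate cases \eqref{eq:main-thm-2} and \eqref{eq:main-thm-2-swap} follow by the same template but with only one square-root factor contributing, producing the $k\ell^2$ and $k^2\ell$ denominators. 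The main obstacle will be constructing a combinatorial witness for the remainder term: an explicit swap operation on the words encoding linear extensions, exchanging adjacent letters in the block between positions $L(x)$ and $L(z)$, that simultaneously respects the $y$-constraint and the underlying poset $P$. This is strictly harder than the one-parameter swap used in the proofs of the Kahn--Saks and Graham--Yao--Yao inequalities, because it must track two gap parameters at once.

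The equality case \eqref{eq:main-thm-3} is where I would begin the write-up, as it admits a direct bijective proof that bypasses the mixed-volume machinery. The hypotheses $\aFr(k,\ell{+}2) = \aFr(k{+}2,\ell) = 0$ forbid linear extensions with these gap structures, which rigidifies $P$ near $\{x,y,z\}$ enough to define an explicit bijection $\cF(k,\ell)\times\cF(k{+}1,\ell{+}1) \to \cF(k{+}1,\ell)\times\cF(k,\ell{+}1)$ by swapping a distinguished pair of letters between the two extensions of each input; the vanishing hypotheses guarantee that the swap is well-defined in both directions, since the ``forbidden'' targets with gap structure $(k{+}2,\ell)$ or $(k,\ell{+}2)$ are empty. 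This case both stands alone as a proof of \eqref{eq:main-thm-3} and serves as a combinatorial sanity check for the swap operations underlying the $\varepsilon$-extraction in the generic case.
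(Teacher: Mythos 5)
Your overall architecture --- realize $\aF(k,\ell)$ as a mixed volume of slices of the order polytope, extract a baseline $\tfrac12$ from a geometric inequality, and buy the $\ve$-correction with injections on words --- is the same as the paper's. But the central geometric step is misidentified, and this is a genuine gap. The pairwise Alexandrov--Fenchel log-concavities you write down (the paper's \eqref{eq:LogC-1}--\eqref{eq:LogC-3}), combined by AM--GM and Cauchy--Schwarz, yield only
\[
\frac{\aF(k+1,\ell)\,\aF(k,\ell+1)}{\aF(k,\ell)\,\aF(k+1,\ell+1)} \ \geq \ \frac{\aF(k+2,\ell)\,\aF(k,\ell+2)}{\aF(k+1,\ell+1)^2}
\]
(Corollary~\ref{cor:CPC-logconcave-product}), whose right-hand side can be zero; no combination of two-body inequalities produces the additive constant $\tfrac12$. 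What is needed is the genuinely three-body inequality \. $\iV_{\ibQ}(\iA,\iC)\,\iV_{\ibQ}(\iB,\iC) \ge \tfrac12\,\iV_{\ibQ}(\iA,\iB)\,\iV_{\ibQ}(\iC,\iC)$ \. (Favard's inequality, Theorem~\ref{thm:Favard}) together with its quantitative refinement via the Matsumura--Fenchel inequality \eqref{eq:squares} (Theorem~\ref{thm:Sch}); these exploit the Lorentzian signature of $\iV_{\ibQ}(\cdot,\cdot)$ in a way the pairwise inequalities do not. From \eqref{eq:squares} one does get the remainder $\sqrt{\aF(k+2,\ell)\aF(k,\ell+2)}\,/\,2\aF(k+1,\ell+1)$ that you correctly anticipate (Proposition~\ref{prop:cpc1+eps}), plus a separate degenerate variant (Proposition~\ref{p:cpc+eps0}) needed for \eqref{eq:main-thm-2}. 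Your proposal does not distinguish these two regimes, yet they require different combinatorial inputs: in the nonvanishing case an upper bound \. $\aF(k+1,\ell+1)/\aF(k,\ell+2) = O(nk)$, and in the vanishing case a lower bound \. $\aF(k,\ell)\aF(k+2,\ell)/\aF(k+1,\ell)^2 \ge 1/(2nk\ell^2)$. Neither is a ``trivial bound'' --- both are multi-case injections on words (Lemmas~\ref{l:Fkell-bound1}, \ref{l:Fkell-bound2} and~\ref{l:Fkl-bound3}) --- and the bound you call trivial, $\aF(k,\ell)\le n\,\aF(k+1,\ell+1)$, points in the wrong direction for what the remainder term requires.

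For the equality case \eqref{eq:main-thm-3}, your letter-swap bijection is not justified as stated: you do not say which letters to swap, and the vanishing hypotheses do not obviously make any local swap well-defined. The actual mechanism is structural. The explicit vanishing criterion (Theorem~\ref{thm:vanishing}) describes the support of $\aF$ as a hexagon in the $(k,\ell)$-plane; the hypotheses pin $k$ and $\ell$ to the extreme values $k=n-b(z_1)-b^*(z_2)$ and $\ell=n-b(z_2)-b^*(z_3)$, and these force $b(z_2)+b^*(z_2)\ge n+1$, i.e.\ every element of $X$ is comparable to the middle element $y$. The poset then splits at $y$, the count factors as a product $\aF(p,q)=\aN_{s-p+1}\,\aN_q'$, and the equality is immediate (Lemma~\ref{lem:strict-CPC-zero}). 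Your ``swap the lower halves'' map is exactly this factorization, but it cannot be defined before the comparability statement is proved. A minor bookkeeping point: the relevant mixed volumes involve $n-2$ bodies ($k-1$ copies of one slice, $\ell-1$ of a second, and $n-k-\ell$ of a third), not $n-3$.
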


\smallskip

When \. $\aF(k,\ell)\. \aF(k+1,\ell+1)=0$, the inequality \eqref{eq:CPC}
holds trivially. Curiously, the equality \eqref{eq:main-thm-3} does not
hold in that case since the LHS can be strictly positive
(Example~\ref{ex:vanish-unequal}). Except for the natural
symmetry between \eqref{eq:main-thm-2-swap} and \eqref{eq:main-thm-2}, the proof of remaining
three cases are quite different and occupies much of the paper.

Note that computing the number \. $e(P)$ \. of linear extensions of~$P$ is $\SP$-complete \cite{BW},
even for posets of height two or dimension two~\cite{DP}.  Still, the vanishing assumptions which
distinguish the cases in the Main Theorem~\ref{t:main}, can be decided in polynomial time
(see Theorem~\ref{thm:vanishing}).

\smallskip

The proof of the Main Theorem~\ref{t:main} is a combination of geometric and combinatorial
arguments.  The former are fairly standard in the area, and used largely as a black box.
The combinatorial part is where the paper becomes technical, as the translation of
geometric ratios into the language of posets (following Stanley's pioneering
approach in~\cite{Sta}) leads to bounds on ratios of linear extensions that
have not been investigated until now.  Here we employ the \defng{combinatorics of words} \ts
technology following our previous work \cite{CPP1,CPP,CPP2} (cf.~$\S$\ref{ss:finrem-pin})

Let us emphasize that getting an explicit constant above $1/2$ in the RHS is the main
difficulty in the proof, as the $1/2$ constant is relatively straightforward to obtain
from Favard's inequality.  This was noticed independently
by Yair Shenfeld who derived it from Theorem~\ref{thm:Favard}
in the same way we did in the proof of Theorem~\ref{thm:CPC-half}.\footnote{Yair Shenfeld, personal communication (May~2,~2021).}
In another independent development, Julius Ross, Hendrik  S\"uss and Thomas Wannerer gave a
proof of the same $1/2$ lower bound using the technology of
\defng{Lorentzian polynomials} \cite{BH} combined with a technical
result from \cite{BLP}.\footnote{Julius Ross, personal communication (May~31, 2023).}

\smallskip

Our combinatorial tools also allow us to inch closer to the CPC for two classes of posets.
Fix a subset \. $A\subseteq X$.
We say that a poset $P=(X,\prec)$ is \defn{$t$-thin with respect to~$A$}, if for every \.
$u \in X\sm A$ \. there are at most \ts $t$ \ts
elements incomparable to~$\ts u$.  For $A=\emp$, such posets are a subclass of posets
of width~$t$.
Similarly, we say that a poset \ts $P=(X,\prec)$ \ts is \defn{$t$-flat with respect to~$A$},
if for every \. $u \in A$ \. there are at most \ts $t$ \ts elements comparable to~$u$.
For $A=X$, such posets are a subclass of posets of height~$t$.

\smallskip

\begin{thm} \label{t:thin}
Let \ts $P=(X,\prec)$ \ts be a finite poset.  Fix distinct elements \. $x,y,z\in X$,
and let \. $A:=\{x,y,z\}$.  Suppose that \ts $P$ \ts is either \ts $t$-thin with respect
to~$A$, or $t$-flat with respect to~$A$.  Then:
\begin{equation}\label{eq:thin-thm}
{\aFr(k+1,\ell) \, \aFr(k,\ell+1)} \, \geq \, \left(\tfrac12 \. + \. \tfrac{1}{16 \ts t \ts (t+1)^3}\right) \. {\aFr(k,\ell) \, \aFr(k+1,\ell+1)}.
\end{equation}
%\com{SH}{Possible strengthening, we only need that  $t^*(x),t^*(y),t^*(z) \leq t$ here, i.e. the definition of $t$-flat needs only to be checked for $x,y,z$, which is a more local condition. On the other hand, the same cannot be done for $t$-thin, and the maximum really needs to be taken over all elements of $P$.}
\end{thm}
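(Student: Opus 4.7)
The plan is to follow the same overall strategy as in the Main Theorem~\ref{t:main}, but to exploit the thin or flat structure to replace the factors of $n$, $k$, and $\ell$ in the excess-over-$\tfrac12$ bound by quantities depending only on~$t$.

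First, I would run the geometric reduction of Theorem~\ref{t:main} verbatim: interpret $\aFr(k,\ell)$ as (a multiple of) a mixed volume of slices of the order polytope of~$P$, apply Favard's inequality (Theorem~\ref{thm:Favard}) to obtain the $\tfrac12$ bound, and identify the excess
\[
\aFr(k+1,\ell)\,\aFr(k,\ell+1) \. - \. \tfrac12\,\aFr(k,\ell)\,\aFr(k+1,\ell+1)
\]
with a combinatorial count of pairs of linear extensions carrying a distinguished transposition, in the style of the word-combinatorics approach of \cite{CPP1, CPP, CPP2}.

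Next, I would use the thin/flat hypothesis to produce a block decomposition of this excess. In the $t$-thin case, each element $u \in X \sm A$ has at most $t$ elements incomparable to it, so the natural swap graph on linear extensions is locally sparse away from~$A$, and pairs of linear extensions organize into equivalence classes whose size is bounded by a function of~$t$ alone. In the $t$-flat case, each element of~$A$ is comparable to at most $t$ elements, which symmetrically restricts the relative positions of $x$, $y$, $z$ in a linear extension; the same block decomposition applies with the roles of ``inside'' and ``outside'' $A$ interchanged. In either case it suffices to analyze a single block and average over them, rather than to average over all of $[n]$, $[k]$, $[\ell]$ as in \eqref{eq:main-thm-1}.

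The main obstacle is to extract the precise constant $\frac{1}{16\,t\,(t+1)^3}$ from this analysis. I would expect a factor of $t$ to arise from averaging over the choice of swap target, together with a factor of $(t+1)^3$ coming from the number of local configurations one must compare against in order to deploy the combinatorics-of-words injection used in the proof of \eqref{eq:main-thm-1}. The technical challenge is to verify that the two hypotheses really do lead to essentially dual local estimates, and that the block bounds survive the interaction between the constraints at $x$, $y$, $z$ and the rest of the poset; once this is in place, the argument should close in the same shape as the proof of Theorem~\ref{t:main}.
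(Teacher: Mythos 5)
There is a genuine gap. Your first step already diverges from anything the paper can support: Favard's inequality (Theorem~\ref{thm:Favard}) gives exactly the constant $\tfrac12$, and the excess over $\tfrac12$ is \emph{not} identified with a count of pairs of linear extensions carrying a distinguished transposition --- no such identity is available. What actually controls the excess are the strengthened mixed-volume inequalities, Propositions~\ref{prop:Vol-CPC-1} and~\ref{prop:Vol-CPC-2}, which via Lemma~\ref{lem:CPC-Fkl-mixed-volume} express the gain over $\tfrac12$ in terms of ratios such as $\sqrt{\aF(k,\ell+2)\,\aF(k+2,\ell)}/\aF(k+1,\ell+1)$ and $\aF(k,\ell)\,\aF(k+2,\ell)/\aF(k+1,\ell)^2$ (Propositions~\ref{prop:cpc1+eps} and~\ref{p:cpc+eps0}). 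The combinatorics-of-words input is then an \emph{upper} bound on ratios like $\aF(k+1,\ell+1)/\aF(k,\ell+2)$, obtained from explicit injections $\psi\colon \cF(k+1,\ell+1)\to I\times\cF(k,\ell+2)$ with $|I|$ bounded by a function of $t$ (Lemmas~\ref{l:Fkell-bound1}, \ref{l:Fkell-bound2}, \ref{l:Fkl-bound3}, Corollary~\ref{cor:thin}, Proposition~\ref{p:Fk-bound2}). Your proposed ``block decomposition of the swap graph'' is not a substitute for these two ingredients: it neither produces the needed strengthening of Favard nor the injective ratio bounds, and as stated it cannot be checked to yield any constant at all.

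A second, independent gap is the missing case analysis on the vanishing of $\aF(k,\ell+2)$ and $\aF(k+2,\ell)$. Proposition~\ref{prop:cpc1+eps} improves on $\tfrac12$ only when $\aF(k,\ell+2)\,\aF(k+2,\ell)>0$; if one of these vanishes the excess term there is zero and your argument stalls at $\tfrac12$. The paper handles this by splitting into the nonvanishing case (Theorem~\ref{t:thin-part1}, giving $\tfrac12+\tfrac{1}{2(2t^2+t)}$), the singly-vanishing case (Theorem~\ref{t:thin-part2}, using the degenerate inequality of Proposition~\ref{prop:Vol-CPC-2} together with Proposition~\ref{p:Fk-bound2}, which is where the constant $\tfrac{1}{16\,t\,(t+1)^3}$ actually comes from), its dual, and the doubly-vanishing case where Lemma~\ref{lem:strict-CPC-zero} gives exact equality. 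Without this trichotomy and the degenerate Favard variant, the claimed bound cannot be reached.
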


\smallskip

Note that the constant in the RHS of \eqref{eq:thin-thm} depends only on~$t$, and
thus holds for posets of arbitrary large size~$n$, see also~$\S$\ref{ss:finrem-needle}.
We also have the following counterpart to the CPC.

\smallskip

\begin{thm}[{\rm \defng{Converse cross--product inequality}}{}] \label{t:main-converse}
Suppose that \. $\aFr(k,\ell) \. \aFr(k+1,\ell+1)>0$. Then:
\[
\aFr(k+1,\ell) \, \aFr(k,\ell+1) \, \leq  \,  2k\ell(\min\{k,\ell\}+1) \ts n \.\cdot
\. \aFr(k,\ell) \. \aFr(k+1,\ell+1).
\]
\end{thm}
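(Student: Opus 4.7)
The plan is to prove Theorem~\ref{t:main-converse} by constructing a combinatorial injection between pairs of linear extensions, in the spirit of the combinatorics-of-words arguments from \cite{CPP1,CPP,CPP2}. Set $M := 2k\ell(\min\{k,\ell\}+1)n$ and write $A := \cF(k,\ell)$, $B := \cF(k+1,\ell)$, $C := \cF(k,\ell+1)$, $D := \cF(k+1,\ell+1)$. I aim to define an injective map $\Phi\colon B\times C\hookrightarrow A\times D\times[M]$; the claimed inequality then follows by comparing cardinalities.

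Given a pair $(L_1,L_2)\in B\times C$, observe that $L_1$ has exactly $k$ elements strictly between $x$ and $y$ (listed in $L_1$-order as $\alpha_1,\ldots,\alpha_k$), and $L_2$ has exactly $\ell$ elements strictly between $y$ and $z$ (listed in $L_2$-order as $\delta_1,\ldots,\delta_\ell$). The map $\Phi$ will perform a \emph{double slide}: it selects indices $i\in[k]$ and $j\in[\ell]$ (contributing the factor $k\ell$ in $M$), and then slides $\alpha_i$ out of the $(x,y)$-block of $L_1$ past $z$ to obtain $L_1'\in A$ (reducing both $L(y)-L(x)$ and $L(z)-L(x)$ by one), while sliding $\delta_j$ out of the $(y,z)$-block of $L_2$ and into its $(x,y)$-block to obtain $L_2'\in D$. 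The insertion position of $\alpha_i$ (resp.\ $\delta_j$) in $L_1'$ (resp.\ $L_2'$) is forced by the poset relations to lie in an interval of length at most $\min\{k,\ell\}+1$, accounting for that factor in $M$; a pivot element is recorded to identify which of the $n$ elements of $X$ was moved (accounting for the factor $n$); and a binary flag handles a case split between two symmetric versions of the construction (accounting for the factor $2$).

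Injectivity of $\Phi$ is checked by describing the inverse: from $(L_1',L_2',m)$ one decodes from $m\in[M]$ the indices $i,j$, the insertion slots, the pivot element, and the binary flag, and then reverses the two slides to recover $(L_1,L_2)$ uniquely. A cardinality comparison $|B|\cdot|C|\leq M\cdot|A|\cdot|D|$ finishes the proof.

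\textbf{Main obstacle:} The slide of $\alpha_i$ past $y$ and $z$ requires that $\alpha_i$ be incomparable in $P$ to each element it passes over; in particular, when $\alpha_i\prec y$ or $\alpha_i\prec z$ the natural slide fails. Similarly for the slide of $\delta_j$ backward past $y$ in $L_2$. Handling these blocked slides is the technical heart of the argument: one must re-route the move through an alternative slot (absorbed by the $\min\{k,\ell\}+1$ factor) or switch to the dual construction (absorbed by the factor~$2$), all while ensuring that these alternative routings remain decodable from the auxiliary data so that the map stays injective. This is precisely the kind of case analysis for which the combinatorics-of-words machinery from \cite{CPP1,CPP,CPP2} and referenced in~$\S$\ref{ss:finrem-pin} is well suited.
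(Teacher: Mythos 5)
There is a genuine gap here: what you have written is a plan whose technical core is explicitly deferred, and moreover one of the two moves you describe does not land in the claimed target set. If $L_2\in\cF(k,\ell+1)$ and you slide $\delta_j$ out of the $(y,z)$-block and into the $(x,y)$-block, the result satisfies $L(y)-L(x)=k+1$ and $L(z)-L(y)=\ell$, i.e.\ it lies in $\cF(k+1,\ell)$, \emph{not} in $D=\cF(k+1,\ell+1)$. To reach $\cF(k+1,\ell+1)$ the inserted element must come from outside the interval $[L(y),L(z)]$ (from before $x$ or from after $z$), which is a different and harder move: such an element may be comparable to everything it must pass, and controlling this is exactly where all the work lies. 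Your ``Main obstacle'' paragraph acknowledges the blocked-slide problem but resolves it only by gesture (``re-route through an alternative slot,'' ``switch to the dual construction''); without specifying these cases and verifying decodability, there is no injection and no proof. Relatedly, your accounting of the constant is not justified: the claim that the insertion position is confined to an interval of length $\min\{k,\ell\}+1$ has no argument behind it, and in fact that factor does not arise this way — it arises because one proves two bounds, one with a factor $\ell+1$ and one (by poset duality, swapping $z_1\leftrightarrow z_3$ and $k\leftrightarrow\ell$) with a factor $k+1$, and takes the better of the two.

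For comparison, the paper does not couple the two linear extensions at all. It factors the statement into two single-ratio bounds, each proved by an injection on a single word with a careful case analysis: Lemma~\ref{l:Fkell-bound2} gives $\aF(k+1,\ell)/\aF(k,\ell)\le 2k\ell$, and Lemma~\ref{l:Fkl-bound3}, applied with $k'=k-1$, $\ell'=\ell+1$, gives $\aF(k,\ell+1)/\aF(k+1,\ell+1)\le n(\ell+1)$; multiplying and then dualizing yields the theorem. Your product map $\Phi$ on pairs would in any case decompose into two independent single-word injections of exactly this type, so the pairing adds nothing; the substance you still need to supply is precisely the content of those two lemmas — the identification, in each comparability configuration, of a movable element, the bound on how far it moves in terms of $t(\cdot)$, $t^*(\cdot)$, $b(\cdot,\cdot)$, and the bookkeeping that makes the combined map invertible.
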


\smallskip

Note that the inequality in the theorem is asymptotically tight, see Proposition~\ref{prop:converse}.
On the other hand, originally we believed in the following stronger version of the CPC:

\smallskip

\begin{conj}[{\rm\defng{Generalized cross--product conjecture}~\cite[Conj.~3.2]{CPP1}}{}]\label{conj:GCPC}
%Let \ts $x,y,z\in X$ \ts be distinct elements of a finite poset \ts $P=(X,\prec)$.
We have:
\begin{equation}\label{eq:GCPC} \tag{GCPC}
\aFr(k,\ell) \, \aFr(p,q) \,  \ \le \ \aFr(p,\ell) \, \aFr(k,q)  \quad \text{for all} \quad k\le p\ts, \, \ell \le q\ts.
\end{equation}
\end{conj}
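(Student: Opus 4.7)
The plan is to deduce (GCPC) from (CPC) by telescoping, since (GCPC) is precisely the integrated form of the infinitesimal one-step inequality. First I would handle the strip $p = k+1$ by induction on $q-\ell$: the base case $q = \ell+1$ coincides with (CPC). For the inductive step, I would rewrite (CPC) at $(k,q)$ as
\[
\aFr(k+1,q+1) \. \aFr(k,q) \, \le \, \aFr(k+1,q) \. \aFr(k,q+1)\ts,
\]
and multiply it by the inductive hypothesis $\aFr(k,\ell)\.\aFr(k+1,q) \le \aFr(k+1,\ell)\.\aFr(k,q)$; after cancelling the positive factor $\aFr(k,q)\.\aFr(k+1,q)$, one obtains the instance at $q+1$. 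Once the strip $p = k+1$ is established uniformly in $\ell$ and $q$, a second induction on $p-k$ pushes the first coordinate one unit at a time by the same cancellation trick, and this delivers (GCPC) for all $k \le p$, $\ell \le q$.

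The main obstacle is that the telescoping requires (CPC) with the sharp constant $1$ at every point $(k,\ell)$. Theorem~\ref{t:main} only supplies (CPC) inflated by a multiplicative factor $\tfrac12 + O\big(\tfrac{1}{n\sqrt{k\ell}}\big)$, and iterating a constant strictly less than $1$ across $(p-k)(q-\ell)$ steps causes the bound to decay exponentially, which is fatal for the telescoping. So the reduction above is conditional on upgrading Theorem~\ref{t:main} to full (CPC), a target currently achieved only for $k=\ell=1$ \cite{BFT} and for posets of width two \cite{CPP1}.

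Failing this, I would look for a direct route. One option is to realize $\aFr(k,\ell)$ as a mixed volume of a two-parameter family of slices of the order polytope of $P$, in the spirit of Stanley~\cite{Sta}, and to search for a two-parameter Alexandrov--Fenchel-type inequality encoding log-submodularity in $(k,\ell)$. The basic difficulty here is that Alexandrov--Fenchel delivers log-concavity along a single interpolation, whereas (GCPC) is a joint statement in two independent indices, so one would need to identify a genuinely new mixed-volume identity. A purely combinatorial alternative would be to produce an explicit injection
\[
\cF(k,\ell) \times \cF(p,q) \ \hookrightarrow \ \cF(p,\ell) \times \cF(k,q)
\]
by swapping carefully chosen intervals of the two linear extensions around $\{x,y,z\}$. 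The hardest part will be that any such swap must globally respect the order~$\prec$, and the rigidity imposed by relations outside $\{x,y,z\}$ becomes severe precisely when both $p-k$ and $q-\ell$ grow large, which is exactly the regime where (GCPC) is most demanding.
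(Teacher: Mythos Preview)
The central problem is that you are attempting to \emph{prove} a statement which the paper \emph{disproves}. Theorem~\ref{t:GCPC-false} asserts that \eqref{eq:GCPC} fails for an infinite family of width-three posets; the argument (Proposition~\ref{prop:imply}) shows that \eqref{eq:GCPC} implies \eqref{eq:CPC-3}, and Proposition~\ref{p:CPC3-false} then supplies explicit counterexamples to \eqref{eq:CPC-3}. Any attempt to establish \eqref{eq:GCPC} in full generality is therefore doomed from the outset, and your alternative routes via a two-parameter Alexandrov--Fenchel inequality or an explicit injection \ $\cF(k,\ell)\times\cF(p,q)\hookrightarrow\cF(p,\ell)\times\cF(k,q)$ \ must likewise fail.

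It is still worth locating where your telescoping reduction breaks, since \eqref{eq:CPC} remains conjectured true while \eqref{eq:GCPC} is now known to be false. Two things go wrong. First, the cancellation step divides by \ $\aFr(k,q)\.\aFr(k+1,q)$ \ without verifying positivity; the support of \ $\aFr(\cdot,\cdot)$ \ is only a hexagon (Theorem~\ref{thm:vanishing}), so such vanishing is generic near the boundary and the chain of inequalities cannot always be propagated. Second, and more to the point, \eqref{eq:CPC} is stated only for $k,\ell\ge 1$, so your induction could at best reach \eqref{eq:GCPC} restricted to positive indices. The paper's counterexamples are obtained via the change of variables \ $a=-k-1$ \ in the proof of Proposition~\ref{prop:imply}, which lands precisely in the regime where one index is negative---a regime your telescoping never enters. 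Hence even a complete proof of \eqref{eq:CPC} would not contradict Theorem~\ref{t:GCPC-false}.
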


\smallskip

For \. $p=k+1$ \. and \. $q=\ell+1$, where \. $k,\ell\geq 1$, this gives \eqref{eq:CPC}.
In \cite[Thm.~3.3]{CPP1}, the inequality \eqref{eq:GCPC} was proved for posets of width two.
However, here we show that it fails in full generality:

\smallskip

\begin{thm} \label{t:GCPC-false}
The inequality \eqref{eq:GCPC} fails for an infinite family of posets of width three.
\end{thm}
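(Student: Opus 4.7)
The plan is to exhibit a single concrete finite poset of width three for which \eqref{eq:GCPC} fails strictly, and then upgrade it to an infinite family by padding with a chain that does not change the relevant ratios.

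\textbf{Step 1: Base counterexample.} First, I would search for an explicit poset $P^\ast = (X, \prec)$ of width exactly three, together with distinguished elements $x,y,z \in X$ and four indices $k \le p$ and $\ell \le q$, such that direct enumeration of $\Ec(P^\ast)$ yields the strict inequality
\[
\aF(k,\ell)\,\aF(p,q) \ > \ \aF(p,\ell)\,\aF(k,q).
\]
Since \eqref{eq:GCPC} holds for posets of width two by \cite[Thm.~3.3]{CPP1}, the counterexample must genuinely use an antichain of size three outside of $\{x,y,z\}$. Because \eqref{eq:CPC} is the case $p = k+1$, $q = \ell+1$ and is widely believed, the indices should be chosen with $p - k \ge 2$ or $q - \ell \ge 2$, i.e., on a rectangle strictly larger than $1 \times 1$, where log-supermodularity of $\aF(\cdot,\cdot)$ on the integer lattice is allowed to fail. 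The most efficient route is a targeted computer enumeration over small posets (say up to ten elements), organized by the covering relations among $x,y,z$ and by the placement of the third antichain element; the four values $\aF(k,\ell)$, $\aF(p,q)$, $\aF(p,\ell)$, $\aF(k,q)$ are then computed exactly.

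\textbf{Step 2: Extension to an infinite family.} Given such a base poset $P^\ast$ on $m$ elements, I would form $P^\ast_N$ by prepending a chain $c_1 \prec c_2 \prec \cdots \prec c_N$ below every element of $X$, i.e., decreeing $c_i \prec u$ for all $i \in [N]$ and all $u \in X$. Then $|P^\ast_N| = m + N$, and since each $c_i$ is comparable to everything in $X$, no $c_i$ belongs to an antichain containing any element of $X$; the $c_i$'s form a chain among themselves, so the width of $P^\ast_N$ remains three. Every linear extension of $P^\ast_N$ must place $c_1, \ldots, c_N$ in positions $1, \ldots, N$, and the remaining positions are filled by an arbitrary linear extension of $P^\ast$ shifted uniformly by $N$. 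In particular, both differences $L(y) - L(x)$ and $L(z) - L(y)$ are invariant under this shift, so
\[
\aF_{P^\ast_N}(k,\ell) \ = \ \aF_{P^\ast}(k,\ell) \qquad \text{for all } k,\ell \ge 1.
\]
The strict failure of \eqref{eq:GCPC} at indices $(k,\ell,p,q)$ therefore persists verbatim for every $P^\ast_N$, and the sequence $\{P^\ast_N\}_{N \ge 1}$ is the required infinite family.

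\textbf{Main obstacle.} The real difficulty is locating the base poset $P^\ast$ in Step~1. One must simultaneously arrange (a) that $x$, $y$, $z$ sit far enough apart so that a $2\times 2$ window $\{k,p\} \times \{\ell,q\}$ with $p - k \ge 2$ or $q - \ell \ge 2$ is supported, (b) that all four values $\aF(k,\ell), \aF(p,\ell), \aF(k,q), \aF(p,q)$ are nonzero, and (c) that the additional antichain of size three disrupts log-supermodularity in the required direction. Because width-two posets satisfy (GCPC) and the underlying Alexandrov–Fenchel-type inequality produces log-concavity on the diagonal $p=k+1$, $q=\ell+1$, the obstruction is subtle and essentially two-dimensional; identifying it almost certainly requires computer-assisted search, though once the candidate is in hand the verification is purely numerical and routine.
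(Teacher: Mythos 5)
Your Step~2 (padding with a global minimum chain) is sound: it preserves width three and leaves every value $\aF(k,\ell)$ unchanged, so it would indeed upgrade one counterexample to an infinite family. But the proposal has a genuine gap at its core: Step~1 never actually produces the base poset. The theorem is an existence statement whose entire content is the explicit counterexample, and you defer exactly that to an unexecuted computer search (``I would search for an explicit poset\dots''). As written, nothing is proved. Moreover, your heuristic for where to look --- a window with $p-k\ge 2$ or $q-\ell\ge 2$, on the grounds that \eqref{eq:CPC} itself is believed --- would likely steer the search in the wrong direction: the paper's counterexample to \eqref{eq:GCPC} lives on a unit window $p=k+1$, $q=\ell+1$, but with a \emph{negative} first index, obtained by swapping the roles of $z_1$ and $z_2$.

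Concretely, the paper proceeds in two fully explicit steps. First (Proposition~\ref{prop:imply}), the substitution $z_1'\gets z_2$, $z_2'\gets z_1$, $z_3'\gets z_3$ gives $\aF'(a,b)=\aF(-a,a+b)$, and with $a=-k-1$, $b=\ell+k+1$ the inequality \eqref{eq:CPC-3} becomes an instance of \eqref{eq:GCPC} at consecutive (but negative) indices; hence any violation of \eqref{eq:CPC-3} violates \eqref{eq:GCPC}. Second (Proposition~\ref{p:CPC3-false}), for every $k\ge 1$, $\ell\ge 2$ it exhibits a width-three poset on $k+\ell+3$ elements --- a long chain through $z_1,z_2,z_3$ decorated with three extra elements $u,v,w$ --- and computes by hand $\aF(k,\ell+2)=(\ell+1)\ell$, $\aF(k+1,\ell)=2(\ell-1)$, $\aF(k,\ell+1)=2\ell$, $\aF(k+1,\ell+1)=\ell(\ell-1)$, so the relevant ratio is $\ell/(\ell+1)<1$. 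This family is already infinite, so no padding is needed. To repair your argument you would have to either carry out the search and verify a specific example, or supply a construction of comparable explicitness.
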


\smallskip

Our final result further confirms that CPC is somehow special among similar families of
inequalities. While these other inequalities are not always true, % (see Proposition~\ref{p:CPC3-false}),
they are not simultaneously too far off in the following sense.

\smallskip

\begin{thm} \label{t:CPC-two-three}
For every \ts $P=(X,\prec)$, every distinct \ts $x,y,z\in X$, and every \ts $k,\ell\ge 1$, at least two of the inequalities \ts \eqref{eq:CPC}, \ts
\eqref{eq:CPC-2} \ts and \ts \eqref{eq:CPC-3} \ts are true, where
	\begin{align}\label{eq:CPC-2}\tag{CPC1}
		\aFr(k+2,\ell) \. \aFr(k,\ell+1) \ &\leq \   \aFr(k+1,\ell) \. \aFr(k+1,\ell+1),\\
		\label{eq:CPC-3}\tag{CPC2}
		\aFr(k,\ell+2) \. \aFr(k+1,\ell) \ &\leq \   \aFr(k,\ell+1) \. \aFr(k+1,\ell+1).
	\end{align}
\end{thm}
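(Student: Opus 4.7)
The plan is to prove the contrapositive: at most one of the three inequalities \eqref{eq:CPC}, \eqref{eq:CPC-2}, \eqref{eq:CPC-3} can fail strictly. The argument reduces to three instances of Alexandrov--Fenchel log-concavity for $\aF$, which follow from Stanley's mixed-volume interpretation and should be available from the same geometric framework underlying the Main Theorem (compare also \cite{CPP1}):
\begin{align*}
\aF(k+1,\ell)^2 \ &\ge \ \aF(k,\ell)\.\aF(k+2,\ell), \\
\aF(k,\ell+1)^2 \ &\ge \ \aF(k,\ell)\.\aF(k,\ell+2), \\
\aF(k+1,\ell+1)^2 \ &\ge \ \aF(k+2,\ell)\.\aF(k,\ell+2).
\end{align*}
The first two are log-concavity in a single coordinate; the third is log-concavity along the anti-diagonal $k+\ell = \textnormal{const}$. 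All three are instances of Alexandrov--Fenchel, corresponding to letting the two varying ``slots'' in the mixed volume for $\aF$ be of types $(x,x)$, $(y,y)$, or one of each, respectively.

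I then argue by contradiction: assume two of the three inequalities fail strictly. In each of the three resulting subcases, multiplying the two strict failures together and cancelling the (manifestly positive) common factors will contradict exactly one of the three AF inequalities above.

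If both \eqref{eq:CPC-2} and \eqref{eq:CPC-3} fail, strict positivity of their left-hand sides forces $\aF(k+1,\ell),\aF(k,\ell+1)>0$; multiplying the failures and cancelling $\aF(k+1,\ell)\.\aF(k,\ell+1)$ gives $\aF(k+2,\ell)\.\aF(k,\ell+2) > \aF(k+1,\ell+1)^2$, contradicting the third AF inequality. If \eqref{eq:CPC} and \eqref{eq:CPC-2} both fail, positivity of the LHS of each failure yields $\aF(k,\ell),\aF(k+1,\ell+1),\aF(k+2,\ell),\aF(k,\ell+1)>0$; multiplying and cancelling $\aF(k,\ell+1)\.\aF(k+1,\ell+1)$ gives $\aF(k,\ell)\.\aF(k+2,\ell)>\aF(k+1,\ell)^2$, contradicting the first AF inequality. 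If \eqref{eq:CPC} and \eqref{eq:CPC-3} both fail, the analogous computation (positivity of $\aF(k,\ell),\aF(k+1,\ell+1),\aF(k,\ell+2),\aF(k+1,\ell)$, then cancelling $\aF(k+1,\ell)\.\aF(k+1,\ell+1)$) yields $\aF(k,\ell)\.\aF(k,\ell+2)>\aF(k,\ell+1)^2$, contradicting the second AF inequality.

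The main step to verify is that the three log-concavity inequalities above are indeed available in exactly the form stated; once they are in hand, the rest is short algebra, and the vanishing cases require no separate treatment, since ``failure'' of each of the three inequalities automatically makes its left-hand side strictly positive and thereby forces every factor that gets cancelled to be strictly positive.
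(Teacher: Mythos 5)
Your proposal is correct and follows essentially the same route as the paper: the three log-concavity inequalities you invoke are exactly the paper's Lemma~\ref{lem:logC} (\eqref{eq:LogC-3}, \eqref{eq:LogC-2}, \eqref{eq:LogC-1}), derived from the Alexandrov--Fenchel inequality via the order-polytope mixed-volume interpretation, and the paper then argues exactly as you do by multiplying two strict failures and contradicting one of them. Your explicit attention to why the cancelled factors are strictly positive is a point the paper's proof passes over silently, but the argument is the same.
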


\smallskip

We prove that inequalities \eqref{eq:CPC-2} \ts and \ts \eqref{eq:CPC-3} \ts
hold for posets of width two (Corollary~\ref{c:CPC23-width-two}).  However, they
are false on infinite families of counterexamples (Proposition~\ref{p:CPC3-false}).
By Theorem~\ref{t:CPC-two-three}, this means that the CPC
holds in all these cases.

\smallskip

\subsection*{Paper structure}  We start with a short background Section~\ref{sec:CPC} on
mixed volumes and variations on the Alexandrov--Fenchel inequalities.  This section is
self-contained in presentation, and uses several well-known results as a black box.
In a lengthy Section~\ref{sec:poset} we show how cross product inequalities arise as
mixed volume, and make some useful calculations.  We also prove Theorem~\ref{t:CPC-two-three}.

We begin our combinatorial study of linear extensions in Section~\ref{s:vanish}, where
we give explicit conditions for vanishing of \. $\aF(k,\ell)$, and explore the consequences
which include the equality~\eqref{eq:main-thm-3}.
In Sections~\ref{s:words} and~\ref{s:cross-vanish}, we prove different cross product inequalities
in the nonvanishing and vanishing case, respectively.  We conclude with explicit examples
(Section~\ref{s:explicit}) and final remarks (Section~\ref{s:finrem}).

%%%%%%%%%%%%%%%%%%%%%%%%%%%%%%%%%%%%%%%%%%%%%%%%%%%%%%%%%%%%%%%%%%%%%%%%%%%%%%%%%%%%%%%%%%%%
%%%%%%%%%%%%%%%%%%%%%%%%%%%%%%%%%%%%%%%%%%%%%%%%%%%%%%%%%%%%%%%%%%%%%%%%%%%%%%%%%%%%%%%%%%%%
%%%%%%%%%%%%%%%%%%%%%%%%%%%%%%%%%%%%%%%%%%%%%%%%%%%%%%%%%%%%%%%%%%%%%%%%%%%%%%%%%%%%%%%%%%%%
%%%%%%%%%%%%%%%%%%%%%%%%%%%%%%%%%%%%%%%%%%%%%%%%%%%%%%%%%%%%%%%%%%%%%%%%%%%%%%%%%%%%%%%%%%%%
%%%%%%%%%%%%%%%%%%%%%%%%%%%%%%%%%%%%%%%%%%%%%%%%%%%%%%%%%%%%%%%%%%%%%%%%%%%%%%%%%%%%%%%%%%%%
%%%%%%%%%%%%%%%%%%%%%%%%%%%%%%%%%%%%%%%%%%%%%%%%%%%%%%%%%%%%%%%%%%%%%%%%%%%%%%%%%%%%%%%%%%%%
%%%%%%%%%%%%%%%%%%%%%%%%%%%%%%%%%%%%%%%%%%%%%%%%%%%%%%%%%%%%%%%%%%%%%%%%%%%%%%%%%%%%%%%%%%%%
%%%%%%%%%%%%%%%%%%%%%%%%%%%%%%%%%%%%%%%%%%%%%%%%%%%%%%%%%%%%%%%%%%%%%%%%%%%%%%%%%%%%%%%%%%%%

% \newpage
\bigskip

\section{Mixed volume inequalities}
\label{sec:CPC}

% In this section we prove various inequalities related to the cross--product conjecture through mixed volume inequalities.
% For the \defng{Minkowski} \ts and \defng{Alexandrov--Fenchel inequalities}, we refer to \cite{BZ-book} and~\cite{Sch}.

\subsection{Alexandrov--Fenchel inequalities}\label{ss:mixed-volumes}
%We follow \cite{CP2} in our presentation \com{SH}{This comment is redundant now, because the details from [CP2] has all been added here, so we need to cite it elsewhere instead, say final remarks.}.
  Fix \ts $n \geq 1$.
For two sets \. $A, B \subset \Rb^n$ \. and constants \ts $a,b>0$, denote by
\[ aA+bB \ := \ \bigl\{ \ts a\xb+ b\yb  \, : \, \xb \in A, \yb \in B  \ts \bigr\}
\]
the \defnb{Minkowski sum} of these sets.
For a  convex body \ts $\iA \subset \Rb^n$ \ts with affine dimension $d$, denote by \ts $\Vol_d(\iA)$ \ts the
volume of~$\iA$.
One of the basic result in convex geometry is \defng{Minkowski's theorem}
that the volume of convex bodies with affine dimension $d$ behaves as a homogeneous polynomial
of degree~$d$ with nonnegative coefficients:

\smallskip

\begin{thm}[{\rm Minkowski, see e.g.~\cite[$\S$19.1]{BZ-book}}{}]\label{thm:Minkowski}
	For all convex bodies \. $\iA_1, \ldots, \iA_r \subset \Rb^n$ \. and \. $\lambda_1,\ldots, \lambda_r > 0$,
	we have:
	\begin{equation}\label{eq:mixed volume definition}
		\Vol_d(\lambda_1 \iA_1+ \ldots + \lambda_r \iA_r) \ =  \ \sum_{1 \ts \le \ts i_1\ts ,\ts \ldots \ts , \ts i_d\ts \le \ts r} \. \iV\bigl(\iA_{i_1},\ldots, \iA_{i_d}\bigr) \, \lambda_{i_1} \ts\cdots\ts \lambda_{i_d}\.,
	\end{equation}
	where the functions \ts $\iV(\cdot)$ \ts are nonnegative and symmetric, and where $d$ is the affine dimension of \ts $\lambda_1 \iA_1+ \ldots + \lambda_r \iA_r$ \ts $($which does not depend on the choice of \ts $\lambda_1,\ldots, \lambda_r)$.
\end{thm}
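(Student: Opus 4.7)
The plan is a standard two-stage reduction: first establish \eqref{eq:mixed volume definition} for convex polytopes, then extend to arbitrary convex bodies by approximation. Both Minkowski addition and the volume $\Vol_d$ are continuous with respect to the Hausdorff metric, and polytopes are dense in this metric. Hence once the polynomial identity in \ts $\lambda_1, \ldots, \lambda_r$ \ts is proved for polytopes with symmetric and nonnegative coefficient functions, it passes to general convex bodies by a limiting argument, with the function $\iV(\cdot)$ defined via the coefficients of the limiting polynomial and inheriting both symmetry and nonnegativity.

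For polytopes I would induct on~$d$, the cases $d \le 1$ being immediate (in the latter, the Minkowski sum of segments is a segment whose length is the corresponding linear combination). For the inductive step, set \. $Q(\lambda) \ts := \ts \lambda_1 \iA_1 + \cdots + \lambda_r \iA_r$ \. and use that the support function is linear in~$\lambda$: \. $h_{Q(\lambda)}(u) \ts = \ts \sum_i \lambda_i \ts h_{\iA_i}(u)$. Refine the normal fans of the $\iA_i$ to a common polyhedral fan in $\Rb^n$; for $u$ in the interior of a maximal cone of this refinement, the face $F_u(\iA_i)$ of $\iA_i$ maximizing $\langle u, \cdot \rangle$ is independent of the choice of representative, and the facet of $Q(\lambda)$ with outer unit normal $u$ equals the Minkowski sum $\sum_i \lambda_i F_u(\iA_i)$. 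Decomposing $Q(\lambda)$ into pyramids over its facets with apex at the origin (assumed to lie in the relative interior of $Q(\lambda)$) yields
\begin{equation*}
\Vol_d(Q(\lambda)) \ = \ \frac{1}{d} \. \sum_{u \ts \in \ts U} \. h_{Q(\lambda)}(u) \, \Vol_{d-1}\Bigl(\sum_i \lambda_i F_u(\iA_i)\Bigr),
\end{equation*}
where $U$ is the finite set of outer unit facet normals. By the inductive hypothesis, each $\Vol_{d-1}(\sum_i \lambda_i F_u(\iA_i))$ is a homogeneous polynomial in~$\lambda$ of degree $d-1$ with nonnegative coefficients; multiplying by the nonnegative linear form $h_{Q(\lambda)}(u)$ yields a homogeneous polynomial of degree~$d$ with nonnegative coefficients valid on the chamber of~$\lambda$-space where the facet structure of $Q(\lambda)$ is constant.

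Symmetry of the coefficient function \. $\iV(\iA_{i_1}, \ldots, \iA_{i_d})$ \. is then built in by passing to the symmetric multilinearization of $\Vol_d(Q(\lambda))$ via polarization, and the affine dimension $d$ of $Q(\lambda)$ is independent of $\lambda_1, \ldots, \lambda_r > 0$ since positive rescaling preserves affine hulls. The main obstacle I expect is the gluing step in the induction: the argument above produces only a chamberwise polynomial expression for $\Vol_d(Q(\lambda))$, and one must verify it extends to a single global polynomial. This is settled by noting that $\Vol_d(Q(\lambda))$ is continuous in $\lambda$, the chamber structure in the parameter space is finite and polyhedral, and a continuous piecewise polynomial of bounded degree on such a fan must be globally polynomial of that degree.
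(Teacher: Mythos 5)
The paper offers no proof of this statement: Minkowski's theorem is imported from \cite[$\S$19.1]{BZ-book} and used as a black box, so there is nothing internal to compare against. Your outline is the classical textbook argument (approximation of convex bodies by polytopes, then induction on $d$ via the pyramid decomposition \. $\Vol_d(Q)=\tfrac1d\sum_{u}h_Q(u)\ts\Vol_{d-1}(F_u(Q))$), and most of it is sound; the approximation step and the independence of $d$ from $\lambda$ are handled correctly.

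There are, however, two genuine defects. First, the step you single out as the main obstacle is resolved by a false principle: a continuous piecewise polynomial of bounded degree on a finite polyhedral fan need \emph{not} be globally polynomial --- $\lambda\mapsto|\lambda|$ on $\rr$, piecewise linear on the fan $\{\lambda\le 0\}\cup\{\lambda\ge 0\}$, is the standard counterexample. Fortunately the obstacle itself is illusory: for $\lambda_1,\ldots,\lambda_r>0$ the normal fan of \. $Q(\lambda)=\sum_i\lambda_i\iA_i$ \. is the common refinement of the normal fans of the $\iA_i$ and therefore does not depend on $\lambda$; consequently the set $U$ of facet normals and the description of each facet as $\sum_i\lambda_i F_u(\iA_i)$ are the same for every $\lambda$ in the open positive orthant, the displayed formula is a single polynomial identity on all of $(0,\infty)^r$, and no gluing across chambers is needed. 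You should prove or cite this fact about normal fans instead of invoking the false gluing lemma. Second, nonnegativity of the coefficients does not follow as written: placing the origin in the relative interior of $Q(\lambda)$ only makes the \emph{value} \. $h_{Q(\lambda)}(u)=\sum_i\lambda_i h_{\iA_i}(u)$ \. nonnegative at that particular $\lambda$, while the coefficients $h_{\iA_i}(u)$ of this linear form in $\lambda$ may still be negative, and multiplying a polynomial with nonnegative coefficients by such a form can destroy nonnegativity. The standard repair is to translate each $\iA_i$ separately so that $0\in\iA_i$ (the polynomial, hence each $\iV(\iA_{i_1},\ldots,\iA_{i_d})$, is invariant under translating the bodies); then $h_{\iA_i}(u)\ge 0$ for all $i$ and $u$, and the induction genuinely produces nonnegative coefficients.
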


\medskip

The coefficients \. $\iV(\iA_{i_1},\ldots, \iA_{i_d})$ \. are called \defnb{mixed volumes}
of \. $\iA_{i_1}, \ldots, \iA_{i_d}$\..
We use \ts $d:=d(\iA_1,\ldots, \iA_r)$ \ts to denote  the affine dimension of the Minkowski sum \. $\iA_1+\ldots +\iA_r$\..

There are many classical inequalities concerning mixed volumes,
and here we list those that will be used in this paper.
Let \. $\iA,\iB,\iC$, $\iQ_1,\ldots, \iQ_{d-2}$ \. be convex bodies in
$\Rb^n$\.. We denote \. $\ibQ=(\iQ_1,\ldots,\iQ_{d-2})$ \. and
use   \.  $\iV_{\ibQ}(\cdot, \cdot)$ \. as a shorthand for \. $\iV(\cdot, \cdot, \iQ_1, \ldots, \iQ_{d-2})$\..

\smallskip

\smallskip

%\medskip

\begin{thm}[{\rm \defng{Alexandrov--Fenchel inequality}, see e.g.~\cite[$\S$20]{BZ-book}}{}]\label{thm:AF}
%	For  convex bodies \ts $\iA$, $\iB$, $\iQ_1$, \ts $\ldots$ \ts , $\iQ_{d-2} \subset \Rb^n$, we have:
	\begin{equation}\label{eq:AF} \tag{AF}
		\iV_{\ibQ}(\iA,\iB)^2  \, \geq \, \iV_{\ibQ}(\iA,\iA) \.  \iV_{\ibQ}(\iB,\iB).
	\end{equation}
\end{thm}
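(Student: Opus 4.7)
The plan is to follow Alexandrov's classical route: reduce to simple polytopes by approximation, and then establish a Lorentzian signature property for the quadratic form of mixed volumes.

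First, using continuity of mixed volumes in the Hausdorff metric (a consequence of the polynomial expansion in Theorem~\ref{thm:Minkowski}), I would approximate $\iA,\iB,\iQ_1,\ldots,\iQ_{d-2}$ by convex polytopes and further perturb so that all of them are \emph{strongly isomorphic simple polytopes}, sharing a common simplicial normal fan with rays $\nu_1,\ldots,\nu_N$. Each such polytope is then encoded by its tuple of support numbers $h=(h_1,\ldots,h_N)\in\Rb^N$, and $\Vol_d$ becomes a degree-$d$ homogeneous polynomial in $h$ whose polarization recovers all mixed volumes. Since \eqref{eq:AF} is preserved under limits (with the understanding that equality may become strict in the limit), this reduction is lossless.

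Next, fixing $\ibQ=(\iQ_1,\ldots,\iQ_{d-2})$, I would view $\iV_{\ibQ}(\cdot,\cdot)$ as a symmetric bilinear form on the $N$-dimensional space of support-number deformations. The inequality \eqref{eq:AF} is equivalent to the claim that the restriction of this form to the plane spanned by the support vectors of $\iA$ and $\iB$ has nonpositive determinant, which in turn follows from the \emph{Lorentzian} property: the full bilinear form has at most one positive eigenvalue. I would then prove this Lorentzian property by induction on $d$. The base case $d=2$ is Minkowski's classical inequality $\iV(\iA,\iB)^2\ge\Vol_2(\iA)\Vol_2(\iB)$ for planar convex bodies, which follows directly from the Brunn--Minkowski inequality. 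For the inductive step, a classical boundary identity expresses $\iV_{\ibQ}(\iA,\iB)$ as a sum over facets of $\iQ_{d-2}$ of one-dimension-lower mixed volumes weighted by the support values of $\iA$ and $\iB$; combining the inductive Lorentzian property on each facet with a positivity argument controls the spectrum on the whole deformation space.

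The main obstacle --- and genuinely the heart of the argument --- is precisely this spectral step, known as \emph{Alexandrov's hyperbolic lemma}: showing that the inductive data on facets really does glue together to produce at most one positive eigenvalue for the mixed-volume form on the whole deformation space. Establishing it requires a delicate Perron--Frobenius-type argument, and it is here that the bulk of the technical work is concentrated. Equivalent modern routes --- via the Hodge--Riemann relations on the simple toric variety associated to the common normal fan, or via the Lorentzian polynomial framework of Br\"and\'en--Huh --- can substitute for this step, but would import machinery not used elsewhere in the paper, so I would present the direct polytopal argument to keep the proof self-contained.
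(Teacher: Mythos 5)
The paper does not prove this statement at all: Theorem~\ref{thm:AF} is quoted as a classical black box with a citation to \cite[$\S$20]{BZ-book}, and nothing in the paper depends on how it is proved. So there is no ``paper's proof'' to compare against; what you have written is an outline of Alexandrov's second (polytopal) proof, and as an outline it is strategically sound. The reduction to strongly isomorphic simple polytopes via continuity of mixed volumes is correct and lossless for a non-strict inequality; the passage from the Lorentzian signature of the bilinear form $\iV_{\ibQ}(\cdot,\cdot)$ on support-number space to \eqref{eq:AF} is the standard reverse Cauchy--Schwarz argument (one should note the degenerate case $\iV_{\ibQ}(\iA,\iA)=0$, where \eqref{eq:AF} is trivial since mixed volumes are nonnegative, so one may assume the relevant $2$-plane contains a vector on which the form is positive); and the base case $d=2$ via Brunn--Minkowski is right.

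The genuine gap is the one you yourself flag: the entire content of the theorem is concentrated in the ``hyperbolic lemma'' --- that the facet decomposition $\iV_{\ibQ}(\iA,\iB)=\tfrac1d\sum_i h_{\iA}(\nu_i)\ts\iV^{(i)}(\iB,\iQ_1,\ldots,\iQ_{d-2})$ together with the inductive hypothesis on each facet forces at most one positive eigenvalue for the form on all of $\Rb^N$. Naming this step and remarking that it needs ``a delicate Perron--Frobenius-type argument'' is not the same as carrying it out; the inductive hypothesis on the facet forms does not transfer to the global form by any soft linear-algebra argument, and the irreducibility/connectivity input that makes Perron--Frobenius applicable (so that the positive eigenvector is unique and can be identified with the support vector of a fixed reference body) is exactly where all known proofs do their work. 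As submitted, your text is a correct roadmap to the literature rather than a self-contained proof. Since the paper itself treats \eqref{eq:AF} as external, the appropriate fix here is either to cite \cite{BZ-book} or \cite{Sch} for the hyperbolic lemma explicitly, or to commit to writing out that spectral step in full.
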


\smallskip

% The next lemma is the mixed volume counterpart of Lemma~\ref{lem:atlas-corr-half}.
%
% \smallskip

%\smallskip
%

The following technical result generalizes Theorem~\ref{thm:AF} to inequalities involving differences
in~\eqref{eq:AF}; see e.g.\ \cite[$\S7.4$]{Sch}.

\smallskip

\begin{thm}[{\rm see e.g.~\cite[Lemma~7.4.1]{Sch}}{}]\label{thm:Sch}
	We have
	\begin{equation}\label{eq:squares}
	\begin{split}
		& \big(\iV_{\ibQ}(\iA,\iC)^2 - \iV_{\ibQ}(\iA,\iA) \. \iV_{\ibQ}(\iC,\iC) \big) \. \big( \iV_{\ibQ}(\iB,\iC)^2 - \iV_{\ibQ}(\iB,\iB) \. \iV_{\ibQ}(\iC,\iC)  \big)  \\
	& \hskip1.cm \geq  \ \big(\iV_{\ibQ}(\iA,\iC) \. \iV_{\ibQ}(\iB,\iC) \ - \  \iV_{\ibQ}(\iA,\iB) \. \iV_{\ibQ}(\iC,\iC) \big)^2.
	\end{split}
	\end{equation}
\end{thm}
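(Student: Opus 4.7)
My plan is to reduce the inequality \eqref{eq:squares} to a determinant condition on a $3\times 3$ Gram matrix, and then deduce that condition from the Lorentzian nature of the mixed--volume pairing encoded in Theorem~\ref{thm:AF}. Introduce the shorthands \ts $a := \iV_{\ibQ}(\iA,\iA)$, \ts $b := \iV_{\ibQ}(\iB,\iB)$, \ts $c := \iV_{\ibQ}(\iC,\iC)$, \ts $p := \iV_{\ibQ}(\iA,\iB)$, \ts $q := \iV_{\ibQ}(\iA,\iC)$, \ts $r := \iV_{\ibQ}(\iB,\iC)$, \ts and the Gram matrix
\[
M \ := \ \begin{pmatrix} a & p & q \\ p & b & r \\ q & r & c \end{pmatrix}.
\]
A direct algebraic expansion yields the identity
\[
\bigl(q^2 - ac\bigr)\bigl(r^2 - bc\bigr) \ - \ \bigl(qr - pc\bigr)^2 \ = \ c \ts \cdot \ts \det M,
\]
so \eqref{eq:squares} is equivalent to \ts $c \ts \cdot \ts \det M \ge 0$.

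If \ts $c = 0$, then both sides of \eqref{eq:squares} collapse to \ts $q^2 r^2$ \ts and the inequality holds with equality. Assume now \ts $c > 0$; then it suffices to prove \ts $\det M \ge 0$. View \ts $\iV_{\ibQ}(\cdot,\cdot)$ \ts as a symmetric bilinear form on the space of formal Minkowski combinations of convex bodies in \ts $\Rb^n$, extended by multilinearity. Theorem~\ref{thm:AF} expresses a reverse Cauchy--Schwarz inequality on the cone of actual bodies; the classical consequence, going back to Alexandrov's original proof of \eqref{eq:AF}, is that this form has at most one positive eigenvalue when restricted to any finite-dimensional subspace. Restricted to \. $\mathrm{span}(\iA, \iB, \iC)$, the matrix \ts $M$ \ts therefore has at most one positive eigenvalue, and since \ts $\tr M = a + b + c \ge 0$, it must have exactly one eigenvalue \ts $\ge 0$ \ts and two eigenvalues \ts $\le 0$. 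Hence \ts $\det M \ge 0$, completing the argument.

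The main obstacle is the rigorous justification of the ``Lorentzian signature'' claim used in the previous step, since it invokes an extension of the mixed--volume pairing to formal differences of convex bodies, a setting in which \eqref{eq:AF} itself need not hold. This formalization is carried out in \cite[$\S$7.4]{Sch}, whose framework one would invoke as a black box. A more hands-on alternative would be to apply \eqref{eq:AF} directly to the Minkowski combinations \ts $\iA + \lambda \iC$ \ts and \ts $\iB + \mu \iC$ \ts for \ts $\lambda, \mu \ge 0$, expand the resulting polynomial in \ts $(\lambda, \mu)$, and use the hypothesis \ts $c > 0$ \ts to ``orthogonalize against \ts $\iC$'' by passing to the formal shifts \ts $\lambda = -q/c$, \ts $\mu = -r/c$; however, because these shifts are negative, one then has to work out a continuity/approximation argument to legitimize the extrapolation. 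Either route reaches the same conclusion, and I would prefer the signature-based argument as both cleaner and conceptually transparent.
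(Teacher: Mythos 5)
The paper gives no proof of Theorem~\ref{thm:Sch}: it is quoted verbatim as a black box from \cite[Lemma~7.4.1]{Sch}, so there is no internal argument to compare against. Your skeleton is the standard one behind that reference and it checks out: in your notation the identity $(q^2-ac)(r^2-bc)-(qr-pc)^2=c\cdot\det M$ is correct, the case $c=0$ degenerates to an equality, and for $c>0$ the bound $\det M\ge 0$ does follow once $M$ is known to have at most one positive eigenvalue (here $c>0$ already rules out the negative-definite case, so the trace estimate is not even needed).

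Two cautions. First, the one-positive-eigenvalue property is not an immediate formal consequence of the displayed inequality \eqref{eq:AF} for pairs of convex bodies: applying \eqref{eq:AF} to nonnegative Minkowski combinations of $\iA,\iB,\iC$ gives the nonnegativity of the quadratic $F(t)=(q^2-ac)+2t(qr-pc)+t^2(r^2-bc)$ only for $t\ge 0$, whereas \eqref{eq:squares} is equivalent to $F\ge 0$ on all of $\rr$; the missing half-line is exactly where the hyperbolicity of the form $\iV_{\ibQ}(\cdot,\cdot)$ on differences of support functions must enter. That hyperbolicity comes from Alexandrov's \emph{proof} of \eqref{eq:AF} rather than from its statement, and is what \cite[$\S$7.4]{Sch}, \cite{SvH} and \cite[Prop.~2.17]{BH} supply; since the paper black-boxes the entire theorem to the same source, deferring this step is legitimate, but you should present it as the crux rather than as ``the classical consequence'' of Theorem~\ref{thm:AF}. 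Second, your ``hands-on alternative'' is not merely in need of a continuity argument --- it points in the wrong direction. The shifts $\iA'=\iA-(q/c)\ts\iC$ and $\iB'=\iB-(r/c)\ts\iC$ lie in the $\iV_{\ibQ}$-orthogonal complement of $\iC$, where the form is negative semidefinite by hyperbolicity; what one needs there is the \emph{ordinary} Cauchy--Schwarz inequality for $-\iV_{\ibQ}$, namely $\iV_{\ibQ}(\iA',\iB')^2\le \iV_{\ibQ}(\iA',\iA')\ts\iV_{\ibQ}(\iB',\iB')$, which unwinds exactly to \eqref{eq:squares}. Extrapolating \eqref{eq:AF} itself to these negative coefficients would assert the reverse of that and hence the opposite of the claim. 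So the second route is not an independent alternative; it only succeeds by passing through the same signature statement as the first.
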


\medskip

% Note that Theorem~\ref{thm:AF} is a special case of Theorem~\ref{thm:Sch}, as the RHS of \eqref{eq:squares} is always nonnegative.
%Note that Theorem~\ref{thm:Sch} can in fact  be derived from  Theorem~\ref{thm:AF}; see e.g. \cite[$\S7.4$]{Sch}.

\subsection{Favard's inequality for the cross-ratio}\label{ss:mixed-cross}
Towards proving the Main Theorem~\ref{t:main}, we are most interested in bounds on the
\defn{cross-ratio} \.
$${\Upsilon}_{\ibQ}(\iA,\iB,\iC) \ := \ \frac{\iV_{\ibQ}(\iA, \iC) \. \iV_{\ibQ}(\iB, \iC) }{\iV_{\ibQ}(\iA, \iB) \. \iV_{\ibQ}(\iC, \iC) }\,.
$$
We start with the following well-known result which goes back to Favard (see $\S$\ref{ss:finrem-hist}).

\smallskip

\begin{thm}[{\rm \defng{Favard's inequality}, see~e.g.~\cite[Lemma~5.1]{BGL}}{}]\label{thm:Favard}
	Suppose we have $$\iV_{\ibQ}(\iA, \iB) \. \iV_{\ibQ}(\iC, \iC) >0\ts.$$
Then:
\begin{equation}
\label{eq:Favard}
\frac{\iV_{\ibQ}(\iA, \iC) \, \iV_{\ibQ}(\iB, \iC)}{\iV_{\ibQ}(\iA, \iB) \, \iV_{\ibQ}(\iC, \iC)}  \ \geq \ \frac{1}{2}\..
\end{equation}
\end{thm}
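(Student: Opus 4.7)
The plan is to derive Favard's inequality directly by combining the Alexandrov--Fenchel inequality (Theorem~\ref{thm:AF}) with the Schneider-type quadratic inequality (Theorem~\ref{thm:Sch}). The two ingredients fit together cleanly, and no further geometric input is needed beyond what is already stated.

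First, I would upper bound the left-hand side of \eqref{eq:squares}. By \eqref{eq:AF} applied to the pairs $(\iA,\iC)$ and $(\iB,\iC)$, both factors
\[
\iV_{\ibQ}(\iA,\iC)^2 - \iV_{\ibQ}(\iA,\iA)\,\iV_{\ibQ}(\iC,\iC) \quad \text{and} \quad \iV_{\ibQ}(\iB,\iC)^2 - \iV_{\ibQ}(\iB,\iB)\,\iV_{\ibQ}(\iC,\iC)
\]
are nonnegative and bounded above by $\iV_{\ibQ}(\iA,\iC)^2$ and $\iV_{\ibQ}(\iB,\iC)^2$, respectively. Substituting into \eqref{eq:squares} gives
\[
\iV_{\ibQ}(\iA,\iC)^2 \, \iV_{\ibQ}(\iB,\iC)^2 \ \geq \ \bigl(\iV_{\ibQ}(\iA,\iC)\,\iV_{\ibQ}(\iB,\iC) \ - \ \iV_{\ibQ}(\iA,\iB)\,\iV_{\ibQ}(\iC,\iC)\bigr)^2.
\]

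Next, I take square roots. Writing $P := \iV_{\ibQ}(\iA,\iC)\,\iV_{\ibQ}(\iB,\iC)$ and $Q := \iV_{\ibQ}(\iA,\iB)\,\iV_{\ibQ}(\iC,\iC)$, both nonnegative, the inequality becomes $P \geq |P - Q|$. A case split finishes the proof: if $P \geq Q$, then the ratio $P/Q$ is already $\geq 1 \geq 1/2$, while if $P < Q$, the inequality rearranges to $2P \geq Q$, yielding $P/Q \geq 1/2$. Under the hypothesis $Q > 0$, dividing through gives \eqref{eq:Favard}.

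The only conceivable obstacle is verifying that the Schneider inequality \eqref{eq:squares} is genuinely available without additional nondegeneracy assumptions on $\iA,\iB,\iC,\ibQ$, but since it is cited as a black box from~\cite{Sch} and holds for arbitrary convex bodies, there is nothing further to check. The argument is essentially a two-line reduction: Schneider plus AF to bound the discriminant-like left side, and a trivial sign analysis to extract the factor of $1/2$.
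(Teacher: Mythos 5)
Your proof is correct. Note that the paper itself does not prove Theorem~\ref{thm:Favard} at all: it is quoted as a known result with a pointer to \cite[Lemma~5.1]{BGL}, so there is no in-paper argument to compare against line by line. Your derivation from the two stated black boxes is sound: by \eqref{eq:AF} each factor on the left of \eqref{eq:squares} is nonnegative, and by nonnegativity of mixed volumes (Theorem~\ref{thm:Minkowski}) each is at most $\iV_{\ibQ}(\iA,\iC)^2$ resp.\ $\iV_{\ibQ}(\iB,\iC)^2$, so with $P:=\iV_{\ibQ}(\iA,\iC)\,\iV_{\ibQ}(\iB,\iC)$ and $Q:=\iV_{\ibQ}(\iA,\iB)\,\iV_{\ibQ}(\iC,\iC)$ you get $(P-Q)^2\le P^2$, hence $Q\le 2P$, and the hypothesis $Q>0$ lets you divide. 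It is worth observing that your argument is essentially the degenerate case of the paper's own proof of Proposition~\ref{prop:Vol-CPC-1}: there the same starting point \eqref{eq:squares} is massaged (keeping the terms $\beta_1=\iV_{\ibQ}(\iA,\iA)/\iV_{\ibQ}(\iA,\iB)$ and $\beta_2=\iV_{\ibQ}(\iB,\iB)/\iV_{\ibQ}(\iA,\iB)$ rather than discarding them) into the stronger bound $\tfrac12\bigl(1+\sqrt{\beta_1\beta_2}\bigr)\ge\tfrac12$, which is what the paper actually needs to get past the constant $1/2$. So your proof recovers Favard's inequality cleanly, but dropping the $\beta$-terms at the first step is exactly what one must \emph{not} do to obtain the refined inequalities \eqref{eq:Vol-CPC-1} and \eqref{eq:Vol-CPC-2} that drive the Main Theorem.
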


\smallskip

In the next section, we use order polytopes to write the cross product ratio in~\eqref{eq:CPC}
into the cross-ratio \ts $\Ups$.   Then Favard's inequality~\eqref{eq:Favard} \ts $\Ups \ge 1/2$ \ts
easily gives the constant \ts $1/2$ \ts in the inequalities in the Main Theorem~\ref{t:main}
(see Theorem~\ref{thm:CPC-half}).
To move beyond \ts $1/2$ \ts we need to strengthen~\eqref{eq:Favard}, see below.

\smallskip

\begin{rem}
From geometric point of view, the constant \ts $1/2$ \ts in the inequality~\eqref{eq:Favard}
is sharp.  For example, take \ts $\iA$ \ts and \ts $\iB$ \ts non-collinear line segments,
and \ts $\iC=\iA+\iB$, see e.g.\ \cite[Prop.~5.1]{AFO} and~\cite[Thm~6.1]{SZ}.
However, for various families of convex bodies, it is possible to improve the
constant perhaps, although not to~1 as one would wish.  For example, when \ts $\iC$ \ts
is a unit ball in~$\rr^2$ the constant can be improved to \ts $2/\pi$ \ts \cite[Prop.~5.3]{AFO}.
\end{rem}

\medskip

\subsection{Better cross-ratio inequalities}\label{ss:mixed-more}
The following two results follow from \eqref{eq:squares} by elementary arguments.
They are variations on inequalities that are already known in the literature.
We include simple proofs for completeness.

\smallskip

\begin{prop}\label{prop:Vol-CPC-1}
Suppose that  \. $\iV_{\ibQ}(\iA, \iB) \. \iV_{\ibQ}(\iC, \iC) >0$\ts. Then:	
\begin{equation}\label{eq:Vol-CPC-1}
\frac{\iV_{\ibQ}(\iA, \iC) \. \iV_{\ibQ}(\iB, \iC)}{\iV_{\ibQ}(\iA, \iB) \iV_{\ibQ}(\iC, \iC)}  \ \geq \ \frac{1}{2} \bigg(1 + \frac{\sqrt{\iV_{\ibQ}(\iA, \iA) \. \iV_{\ibQ}(\iB, \iB)}}{\iV_{\ibQ}(\iA, \iB)} \bigg).
	\end{equation}
\end{prop}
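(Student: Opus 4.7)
The plan is to derive \eqref{eq:Vol-CPC-1} directly from Theorem~\ref{thm:Sch} by elementary algebra combined with AM--GM and \eqref{eq:AF}. To streamline the notation, I will set
\[
a := \iV_{\ibQ}(\iA,\iC), \quad b := \iV_{\ibQ}(\iB,\iC), \quad c := \iV_{\ibQ}(\iC,\iC), \quad d := \iV_{\ibQ}(\iA,\iB), \quad \al := \iV_{\ibQ}(\iA,\iA), \quad \be := \iV_{\ibQ}(\iB,\iB),
\]
all nonnegative, with $c>0$ and $d>0$ by hypothesis. The Alexandrov--Fenchel inequality \eqref{eq:AF} yields the three bounds $a^2 \ge \al c$, $b^2 \ge \be c$, and $d^2 \ge \al\be$, and the target \eqref{eq:Vol-CPC-1} rearranges into the single assertion $2ab \ge c\bigl(d + \sqrt{\al\be}\bigr)$.

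My first step is to expand \eqref{eq:squares}: the $a^2 b^2$ terms on the two sides cancel, and after dividing through by $c$ one is left with
\[
2 \ts a\ts b\ts d \ \geq \ \al \ts b^2 \. + \. a^2 \ts \be \. + \. (d^2 - \al\be) \ts c \ts .
\]
The key observation is that AM--GM applied to the first two terms on the right gives $\al b^2 + a^2 \be \ge 2ab\sqrt{\al\be}$, which is what introduces the geometric mean $\sqrt{\al\be}$ into the story. Using the factorization $d^2 - \al\be = (d - \sqrt{\al\be})(d+\sqrt{\al\be})$ and regrouping, the inequality takes the form
\[
2\ts ab\ts\bigl(d - \sqrt{\al\be}\bigr) \ \geq \ c \ts \bigl(d - \sqrt{\al\be}\bigr)\bigl(d + \sqrt{\al\be}\bigr) \ts .
\]

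In the generic case $d > \sqrt{\al\be}$ one simply divides by the positive factor $d - \sqrt{\al\be}$ and reaches the desired bound. The only real obstacle is the equality case $d = \sqrt{\al\be}$ of \eqref{eq:AF} applied to the pair $(\iA,\iB)$, where this cancellation destroys all information. I plan to dispatch this case separately: when $d = \sqrt{\al\be}$, the right-hand side of \eqref{eq:Vol-CPC-1} equals $1$, so it suffices to verify the inequality $ab \ge dc = \sqrt{\al\be}\ts c$. This follows at once by multiplying the two remaining Alexandrov--Fenchel bounds $a^2 \ge \al c$ and $b^2 \ge \be c$ and taking square roots. No further difficulty should arise.
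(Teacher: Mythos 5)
Your proof is correct and follows essentially the same route as the paper: expand \eqref{eq:squares}, apply AM--GM to the two cross terms to produce the geometric mean $\sqrt{\iV_{\ibQ}(\iA,\iA)\ts\iV_{\ibQ}(\iB,\iB)}$, factor the difference of squares, and cancel the common factor. The only real divergence is in the degenerate case where \eqref{eq:AF} holds with equality for the pair $(\iA,\iB)$: the paper disposes of it by a perturbation argument (``we can without loss of generality assume $\beta_1\beta_2<1$''), whereas you handle it directly by multiplying the bounds $\iV_{\ibQ}(\iA,\iC)^2\ge \iV_{\ibQ}(\iA,\iA)\ts\iV_{\ibQ}(\iC,\iC)$ and $\iV_{\ibQ}(\iB,\iC)^2\ge \iV_{\ibQ}(\iB,\iB)\ts\iV_{\ibQ}(\iC,\iC)$; your treatment is self-contained and avoids appealing to continuity of mixed volumes, which is a modest gain in rigor at no extra cost.
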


\smallskip

\begin{proof}
	Let $\alpha_1,\alpha_2,\beta_1,\beta_2$  be nonnegative real numbers given by
	\begin{alignat*}{2}
	\alpha_1 \, &:= \, \frac{{\iV_{\ibQ}(\iA, \iC)}}{\sqrt{\iV_{\ibQ}(\iA, \iB) \. \iV_{\ibQ}(\iC, \iC)}}\, , \qquad
	&& \alpha_2 \, := \, \frac{{\iV_{\ibQ}(\iB, \iC)}}{\sqrt{\iV_{\ibQ}(\iA, \iB) \. \iV_{\ibQ}(\iC, \iC)}}\, , \\
	\beta_1 \, &:= \, \frac{\iV_{\ibQ}(\iA,\iA)}{\iV_{\ibQ}(\iA,\iB)}\,, \qquad
	&& \beta_2 \, := \, \frac{\iV_{\ibQ}(\iB,\iB)}{\iV_{\ibQ}(\iA,\iB)}\,.
	\end{alignat*}
Note that \. $\beta_1 \beta_2 \leq 1$ \. by~\eqref{eq:AF}.
	By perturbing the convex bodies again if necessary, we can without loss of generality assume that \ts $\beta_1 \beta_2 < 1$.

	In this notation, we can rewrite \eqref{eq:squares} as
	\[  ( \alpha_1 \alpha_2-1)^2  \ \leq \  (\alpha_1^2-\beta_1) \. (\alpha_2^2-\beta_2).  \]
	Rearranging the terms, this gives:
	\begin{equation}\label{eq:ab-1}
	\alpha_1\alpha_2  \ \geq \   \frac{1}{2} \, + \, \frac{1}{2} \. \big(\alpha_1^2 \beta_2 + \alpha_2^2 \beta_1 \big) \,  - \, \frac{1}{2} \.  \beta_1 \beta_2\..
	\end{equation}
	By applying the AM--GM inequality to the terms \. $\big(\alpha_1^2 \beta_2 + \alpha_2^2 \beta_1 \big)$\., we get
	\[ \alpha_1 \. \alpha_2 \ \geq \ \frac{1}{2} \, + \,  \alpha_1 \alpha_2 \sqrt{\beta_1\. \beta_2}  \,  - \, \frac{1}{2} \. \beta_1 \beta_2\..  \]
	Rearranging the terms, this gives:
	\[ \big(1 \. - \. \sqrt{\beta_1\beta_2} \big) \. \alpha_1 \alpha_2  \ \geq \  \frac{1}{2} \big(1-\beta_1\beta_2 \big).  \]
	Since $\beta_1\beta_2< 1$, we can divide both side of the inequality above by \. $\big(1 \. - \. \sqrt{\beta_1\beta_2} \big)$ \. and get
		\[  \. \alpha_1 \alpha_2  \ \geq \  \frac{1}{2} \big(1+\sqrt{\beta_1\beta_2} \big).  \]
	This gives the desired \eqref{eq:Vol-CPC-1}.
\end{proof}

\smallskip

%In particular, the following corollary is a direct consequence of Proposition~\ref{prop:Vol-CPC-1}.
%Note that we will not use this corollary at all in this paper, as our results require the stronger Proposition~\ref{prop:Vol-CPC-1}.

We now present a variant of Proposition~\ref{prop:Vol-CPC-1} in a degenerate case.

\smallskip

\begin{prop}
\label{prop:Vol-CPC-2}
Suppose that  \. $\iV_{\ibQ}(\iA, \iB) \iV_{\ibQ}(\iC, \iC) >0$ \. and \. $\iV_{\ibQ}(\iB,\iB)=0$. Then:
\begin{equation}\label{eq:Vol-CPC-2}
	\frac{\iV_{\ibQ}(\iA, \iC) \. \iV_{\ibQ}(\iB, \iC)}{\iV_{\ibQ}(\iA, \iB) \. \iV_{\ibQ}(\iC, \iC)} \ \geq \    \bigg(1+ \sqrt{1-\frac{\iV_{\ibQ}(\iA, \iA) \. \iV_{\ibQ}(\iC, \iC)}{\iV_{\ibQ}(\iA, \iC)^2}} \bigg)^{-1}.
\end{equation}
\end{prop}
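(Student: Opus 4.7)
The plan is to deduce \eqref{eq:Vol-CPC-2} directly from Theorem~\ref{thm:Sch}, exploiting the degeneracy \. $\iV_{\ibQ}(\iB,\iB)=0$ \. which kills one of the two difference factors on the left of \eqref{eq:squares}. This mirrors the strategy of Proposition~\ref{prop:Vol-CPC-1}, but since there is no need to invoke AM--GM to balance two symmetric quantities, the argument will be shorter and more direct.

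Introduce the shorthands \. $a := \iV_{\ibQ}(\iA,\iC)$, $b := \iV_{\ibQ}(\iB,\iC)$, $c := \iV_{\ibQ}(\iA,\iB)$, $d := \iV_{\ibQ}(\iC,\iC)$, $e := \iV_{\ibQ}(\iA,\iA)$. With \. $\iV_{\ibQ}(\iB,\iB)=0$, the inequality \eqref{eq:squares} of Theorem~\ref{thm:Sch} collapses to
\[
(a^2 - e d)\. b^2 \ \geq \ (a b - c d)^2.
\]
I would first dispose of a mild nondegeneracy: the hypothesis \. $cd > 0$ \. forces \. $a > 0$, for otherwise the displayed inequality becomes \. $-e d\. b^2 \geq (cd)^2 > 0$, which is impossible since \. $e,d \geq 0$. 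Together with \. $a^2 \geq e d$ \. from Theorem~\ref{thm:AF}, this confirms that the square root on the right-hand side of \eqref{eq:Vol-CPC-2} is well-defined.

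Next I would rewrite the target as
\[
cd \ - \ ab \ \leq \ ab \sqrt{1 - ed/a^2} \ = \ b\.\sqrt{a^2 - e d}\ts,
\]
which is equivalent to \eqref{eq:Vol-CPC-2} after dividing by $cd$. If \. $cd \leq ab$, the left side is nonpositive while the right side is nonnegative, and the inequality is immediate. Otherwise both sides are positive, and squaring yields \. $(cd - ab)^2 \leq b^2(a^2 - e d)$, which is precisely the consequence of \eqref{eq:squares} displayed above.

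There is no serious obstacle to anticipate: the proposition sits essentially one algebraic rearrangement away from Theorem~\ref{thm:Sch}. The only care required is the sign-sensitive step where one moves from \. $cd - ab \leq b\sqrt{a^2 - ed}$ \. to its squared form, which is why I would split into the two cases \. $cd \leq ab$ \. and \. $cd > ab$ \. rather than square blindly.
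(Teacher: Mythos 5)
Your proof is correct and takes essentially the same approach as the paper: both specialize Theorem~\ref{thm:Sch} using \. $\iV_{\ibQ}(\iB,\iB)=0$, dispose of the case \. $\iV_{\ibQ}(\iA,\iB)\ts\iV_{\ibQ}(\iC,\iC) \le \iV_{\ibQ}(\iA,\iC)\ts\iV_{\ibQ}(\iB,\iC)$ \. trivially, and extract the square root in the remaining case. The only cosmetic difference is that you rearrange the target inequality before squaring, whereas the paper takes the square root of \eqref{eq:squares} directly and then divides by \. $\iV_{\ibQ}(\iA,\iC)\ts\iV_{\ibQ}(\iB,\iC)$ \. (whose positivity it gets from Favard's inequality, while you sidestep that division entirely).
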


\smallskip

\begin{proof}
First note that \eqref{eq:squares} gives:
	\begin{equation}\label{eq:sqrt-1}
	\begin{split}
		& \big(\iV_{\ibQ}(\iA,\iC) \. \iV_{\ibQ}(\iB,\iC) \ - \  \iV_{\ibQ}(\iA,\iB) \. \iV_{\ibQ}(\iC,\iC) \big)^2  \  \\
		& \qquad \leq  \ \big(\iV_{\ibQ}(\iA,\iC)^2 - \iV_{\ibQ}(\iA,\iA) \. \iV_{\ibQ}(\iC,\iC) \big) \.  \iV_{\ibQ}(\iB,\iC)^2.
	\end{split}
\end{equation}
We assume  without loss of generality  that
\begin{equation}\label{eq:sqrt-2}
 \iV_{\ibQ}(\iA,\iC) \. \iV_{\ibQ}(\iB,\iC) \ <  \   \iV_{\ibQ}(\iA,\iB) \. \iV_{\ibQ}(\iC,\iC)\ts.
\end{equation}
In fact, otherwise, since the right side of \eqref{eq:Vol-CPC-2} is at most \ts $1$ \ts we immediately have
\eqref{eq:Vol-CPC-2}.

Now note that \. $ \iV_{\ibQ}(\iA,\iC) \. \iV_{\ibQ}(\iB,\iC) > 0$ \. by \eqref{eq:Favard}
and by the assumption of the theorem.  Taking the square root of \eqref{eq:sqrt-1} using~\eqref{eq:sqrt-2}, and then
dividing by \. $ \iV_{\ibQ}(\iA,\iC) \. \iV_{\ibQ}(\iB,\iC)$\ts, we get:
\[ \frac{\iV_{\ibQ}(\iA,\iB) \. \iV_{\ibQ}(\iC,\iC)}{\iV_{\ibQ}(\iA,\iC) \. \iV_{\ibQ}(\iB,\iC)}  \ - \  1  \ \leq \   \sqrt{1-\frac{\iV_{\ibQ}(\iA, \iA) \. \iV_{\ibQ}(\iC, \iC)}{\iV_{\ibQ}(\iA, \iC)^2}}\,.  \]
This is equivalent to \eqref{eq:Vol-CPC-2}.
\end{proof}

\bigskip

\section{Poset inequalities via mixed volumes}\label{sec:poset}

\subsection{Definitions and notation}\label{ss:CPC-def}
We refer to~\cite{Tro} for some standard posets notation.
Let \. $P=(X, \prec)$ \. be a poset with \ts $|X|=n$ \ts elements.
A \defn{dual poset} \ts is a poset \ts $P^\ast=(X,\prec^\ast)$, where
\ts $x\prec^\ast y$ \ts if and only if \ts $y \prec x$.

We somewhat change the notation and
fix distinct elements \. $z_1,z_2,z_3\in X$ \. which we use throughout the paper.
As in the introduction, for \ts $k,\ell \geq 1$ \ts let
\[ \cF(k,\ell) \ := \ \{L \in \Ec(P) \, : \, L(z_2)-L(z_1)=k, \.  L(z_3)-L(z_2)=\ell \}, \]
and let \. $\aF(k,\ell) \ts := \ts \big|\cF(k,\ell)\big|$.
We will write \ts $\aF_{P,z_1,z_2,z_3}(k,\ell)$ \ts in place of \ts $\aF(k,\ell)$ \ts
when there is a potential ambiguity in regards to the underlying poset $P$ and the elements \ts $z_1,z_2,z_3 \in X$\ts.

\smallskip

\subsection{Half CPC}\label{ss:CPC-main}
We first prove that \eqref{eq:CPC} holds up to a factor of~$2$.  Formally, start with
the following weak version of the Main Theorem~\ref{t:main}:

\smallskip

\begin{thm}\label{thm:CPC-half}
		For every \ts $k,\ell \geq 1$\ts, we have:
		\begin{equation}\label{eq:biCPC-1}\tag{half-CPC}
	 \aFr(k,\ell) \. \aFr(k+1,\ell+1) \ \leq \  2 \. \aFr(k+1,\ell) \. \aFr(k,\ell+1).
		\end{equation}
\end{thm}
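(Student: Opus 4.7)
The plan is to derive~\eqref{eq:biCPC-1} as an essentially immediate consequence of Favard's inequality (Theorem~\ref{thm:Favard}), by realizing the four quantities $\aFr(k,\ell)$, $\aFr(k+1,\ell)$, $\aFr(k,\ell+1)$, $\aFr(k+1,\ell+1)$ as the four mixed volumes appearing on the two sides of~\eqref{eq:Favard}, up to a single common positive constant.

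First I would construct the mixed-volume encoding. Following Stanley's order-polytope approach, one writes the number of linear extensions with prescribed positions of $z_1,z_2,z_3$ as the volume of a slice of $\mathcal{O}(P) \subset \rr^X$, and then, by allowing these positions to range over three thin windows and reading off the coefficients in Minkowski's identity~\eqref{eq:mixed volume definition}, one extracts three convex bodies $\iA,\iB,\iC \subset \rr^{n-3}$ (informally: ``an extra element squeezed into the $z_1$--$z_2$ gap'', ``an extra element squeezed into the $z_2$--$z_3$ gap'', and the baseline configuration), together with an auxiliary tuple $\ibQ = (\iQ_1,\ldots,\iQ_{d-2})$, such that
\begin{align*}
\aFr(k,\ell) \ &= \ c \. \iV_{\ibQ}(\iA,\iB), & \aFr(k+1,\ell) \ &= \ c \. \iV_{\ibQ}(\iA,\iC), \\
\aFr(k,\ell+1) \ &= \ c \. \iV_{\ibQ}(\iB,\iC), & \aFr(k+1,\ell+1) \ &= \ c \. \iV_{\ibQ}(\iC,\iC),
\end{align*}
for one common positive constant $c = c(k,\ell,P) > 0$. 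This encoding is developed in our earlier paper~\cite{CPP1} and is promised above to be used as a black box.

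With the encoding in hand, the remainder is one line. If $\aFr(k,\ell)\.\aFr(k+1,\ell+1) = 0$ the inequality is trivial, so assume otherwise; the encoding then gives $\iV_{\ibQ}(\iA,\iB)\.\iV_{\ibQ}(\iC,\iC) > 0$, so Theorem~\ref{thm:Favard} applies to $(\iA,\iB,\iC;\ibQ)$ and yields
\[
\iV_{\ibQ}(\iA,\iC) \. \iV_{\ibQ}(\iB,\iC) \ \ge \ \tfrac{1}{2} \. \iV_{\ibQ}(\iA,\iB) \. \iV_{\ibQ}(\iC,\iC).
\]
Multiplying both sides by $c^{\.2}$ recovers~\eqref{eq:biCPC-1}.

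The only genuine obstacle lies in the first step: one must produce a single family $(\iA,\iB,\iC,\ibQ)$ matching all four counts simultaneously with a shared normalization $c$, and verify the dimension and nondegeneracy hypotheses needed to invoke Theorems~\ref{thm:Minkowski} and~\ref{thm:Favard}. Since this encoding is already in the literature, the $\tfrac{1}{2}$-bound is genuinely a corollary of Favard; the substantive work in the rest of the paper is devoted to beating the constant $\tfrac{1}{2}$, not to reaching it.
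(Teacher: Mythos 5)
Your proposal is, in outline, exactly the paper's proof: Lemma~\ref{lem:CPC-Fkl-mixed-volume} supplies the encoding you ask for, with the single common constant \ts $c=(n-2)!$, and the theorem is then the one-line application of Favard's inequality (Theorem~\ref{thm:Favard}) that you describe; your treatment of the degenerate case \ts $\aFr(k,\ell)\.\aFr(k+1,\ell+1)=0$ \ts is also fine. Two corrections, though. First, the encoding is not imported from~\cite{CPP1}; it is proved in this paper, following Stanley~\cite{Sta} and Kahn--Saks~\cite{KS}, by slicing the order polytope and computing \ts $\Vol_d\big(\iK^{(s,t)}\big)$ \ts explicitly, so it is part of the work rather than a citation. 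Second, your dictionary is internally inconsistent: with \ts $\iA=\iK_1$, \ts $\iB=\iK_2$, \ts $\iC=\iK_3$ \ts (which is what your informal description of the three bodies amounts to) and \ts $\ibQ$ \ts consisting of \ts $k-1$ \ts copies of \ts $\iK_1$, \ts $\ell-1$ \ts copies of \ts $\iK_2$ \ts and \ts $n-k-\ell-2$ \ts copies of \ts $\iK_3$, the pairing \ts $(\iA,\iB)$ \ts widens \emph{both} gaps and gives \ts $\aFr(k+1,\ell+1)$, while the diagonal term \ts $(\iC,\iC)$ \ts gives the baseline \ts $\aFr(k,\ell)$ --- the opposite of what your table asserts (the off-diagonal entries \ts $(\iA,\iC)\mapsto\aFr(k+1,\ell)$ \ts and \ts $(\iB,\iC)\mapsto\aFr(k,\ell+1)$ \ts you have right). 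This swap happens to be harmless for \eqref{eq:biCPC-1}, since Favard's inequality only sees the product \ts $\iV_{\ibQ}(\iA,\iB)\.\iV_{\ibQ}(\iC,\iC)$, but it would corrupt the refined bounds of Propositions~\ref{prop:Vol-CPC-1} and~\ref{prop:Vol-CPC-2}, where the diagonal terms \ts $\iV_{\ibQ}(\iA,\iA)$ \ts and \ts $\iV_{\ibQ}(\iB,\iB)$ \ts enter individually and must correspond to \ts $\aFr(k+2,\ell)$ \ts and \ts $\aFr(k,\ell+2)$.
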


\smallskip

To prove Theorem~\ref{thm:CPC-half}, we will first interpret the quantity \. $\aF(k,\ell)$ \.
as in the language of mixed volumes.  Here we follow Stanley's approach in \cite{Sta} (see also~\cite{KS}).

\smallskip

Fix a poset \ts $P=(X,\prec)$, and let \ts $\Rb^X$ \ts be the space of real vectors  \ts $\vb$ \ts
that are indexed by elements \ts $x\in X$.
Throughout this section, the entries of the vector \ts $\vb$ \ts that corresponds to $x \in X$ will be denoted by $\vb(x)$, to maintain legibility when $x$ are substituted with elements \ts $z_i$\ts.
The \defnb{order polytope}  \. $\iK:=\iK(P) \subset \Rb^X$ \. is defined as follows:
\[ \iK \ := \ \big\{\ts\vb \in \Rb^X \ \, : \, \ \vb(x) \leq  \vb (y) \, \text{ for all } \, x \prec y, \, x, y \in X\., \ \.  \text{ and } \ \.  0 \leq \vb(x)\leq 1 \, \text{ for all } \, x \in X  \ts\big\}.  \]
Let  \. $\iK_1$, \ts $\iK_2$, \ts $\iK_3 \subseteq \iK$ \. be the slices of the order polytope defined as follows:
\begin{equation}\label{eq:K-polytope}
	\begin{split}
		\iK_1 \ &:= \ \{\. \vb \in \iK \, : \,   \vb(z_2) - \vb(z_1) \. = \. 1,  \  \vb(z_3)  -  \vb(z_2) \. = \. 0   \.  \}\ts,\\
		\iK_2 \ &:= \ \{\. \vb \in \iK \, : \,  \vb(z_2) - \vb(z_1) \. = \. 0,  \  \vb(z_3)  -  \vb(z_2) \. = \. 1  \.  \}\ts,\\
		\iK_3 \ &:= \ \{\. \vb \in \iK \, : \, \vb(z_2) - \vb(z_1) \. = \. \vb(z_3)   -  \vb(z_2) \. = \. 0  \.  \}\ts.
	\end{split}
\end{equation}
%\com{GP}{Is there a reason why we don't simply write $\vb(z_1)=0, \vb(z_2)=\vb(z_3)=1$ etc for the conditions?}
%Note that $\iK_1, \iK_2, \iK_3$ are convex polytopes of affine dimension $d=n-2$.
Note that all Minkowski sums of these three polytopes have affine dimension $d=n-2$.
\smallskip

\begin{lemma}\label{lem:CPC-Fkl-mixed-volume}
	Let \ts $k,\ell \geq 1$\ts, \ts $k+\ell \le n$.  We have:
	\begin{equation}\label{eq:CPC-Fkl-mixed-volume}
		\aFr(k,\ell) \ = \ (n-2)! \  \iV(\underbrace{\iK_1,\ldots, \iK_1}_{k-1}, \underbrace{\iK_2,\ldots,\iK_2}_{\ell-1}, \underbrace{\iK_3,\ldots, \iK_3}_{n-k-\ell}).
	\end{equation}
\end{lemma}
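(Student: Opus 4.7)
My strategy follows Stanley's: realize $\aFr(k, \ell)$ as a coefficient in the polynomial expansion of the volume of a slice of the order polytope, then match two different expansions.

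For $a, b \ge 0$ with $a + b \le 1$, let $c := 1 - a - b$ and define
\[
\iK(a,b) \ := \ \{\vb \in \iK \, : \, \vb(z_2) - \vb(z_1) = a, \, \vb(z_3) - \vb(z_2) = b\}.
\]
The crux is the Minkowski identity
\[
\iK(a, b) \ = \ a\.\iK_1 \. + \. b\.\iK_2 \. + \. c\.\iK_3\ts.
\]
The inclusion $\supseteq$ is immediate, since any convex combination of points in the $\iK_j$ lies in $\iK$ and the defining affine equations add correctly. For $\subseteq$, I would use the threshold decomposition: for $\vb \in \iK$, write $\vb = \int_0^1 \vb^u \, du$, where $\vb^u(x) := \mathbf{1}[\vb(x) > u]$ is the characteristic vector of an upper set of $P$ (a vertex of $\iK$). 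If $\vb \in \iK(a,b)$ and $t := \vb(z_1)$, then on the consecutive subintervals $[0, t)$, $[t, t+a)$, $[t+a, t+a+b)$, $[t+a+b, 1)$ the vector $\vb^u$ lies in $\iK_3, \iK_1, \iK_2, \iK_3$ respectively. Averaging over each piece yields $\vb = a \bar\vb_1 + b \bar\vb_2 + c \bar\vb_3$ with $\bar\vb_j \in \iK_j$ by convexity.

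Granted the identity, I would apply Theorem~\ref{thm:Minkowski} with the $(n-2)$-volume normalized by the projection $\Rb^X \to \Rb^{X \sm \{z_2, z_3\}}$ (a normalization that is consistent across the $\iK_j$ and their Minkowski sums):
\[
\Vol_{n-2}(\iK(a,b)) \ = \ \sum_{i+j+r \. = \. n-2} \binom{n-2}{i, j, r} \. \iV(\iK_1^i, \iK_2^j, \iK_3^r) \, a^i \. b^j \. c^r.
\]
In parallel, I would compute $\Vol_{n-2}(\iK(a,b))$ combinatorially, by decomposing $\iK$ into the chambers $\Delta_L := \{\vb \in [0,1]^X : \vb(L^{-1}(1)) \le \cdots \le \vb(L^{-1}(n))\}$ indexed by $L \in \Ec(P)$. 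For each $L \in \cF(k, \ell)$, a direct Beta-integral calculation in the free variable $t = \vb(z_1)$ yields
\[
\Vol_{n-2}(\Delta_L \cap \iK(a, b)) \ = \ \frac{a^{k-1} \. b^{\ell-1} \. (1-a-b)^{n-k-\ell}}{(k-1)! \. (\ell-1)! \. (n-k-\ell)!}\ts,
\]
independent of $L(z_1)$. Summing gives
\[
\Vol_{n-2}(\iK(a, b)) \ = \ \sum_{k, \ell \. \ge \. 1} \aFr(k, \ell) \cdot \frac{a^{k-1} \. b^{\ell-1} \. c^{n-k-\ell}}{(k-1)! \. (\ell-1)! \. (n-k-\ell)!}\ts.
\]
Matching the coefficient of $a^{k-1} b^{\ell-1} c^{n-k-\ell}$ in these two expansions yields the lemma.

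The main obstacle is the Minkowski identity, which requires decomposing an arbitrary $\vb \in \iK(a, b)$ into contributions from $\iK_1, \iK_2, \iK_3$; the threshold construction above handles it cleanly, provided one verifies that $\vb^u$ really lands in the asserted $\iK_j$ on each subinterval. The remainder of the argument is routine bookkeeping.
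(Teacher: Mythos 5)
Your proof is correct and follows essentially the same route as the paper's (Stanley's approach): slice the order polytope, identify the slice as \ts $a\ts\iK_1+b\ts\iK_2+c\ts\iK_3$, compute its \ts $(n-2)$-volume chamber-by-chamber as a product of simplices of dimensions \ts $k-1$, $\ell-1$, $n-k-\ell$, and match coefficients against Minkowski's theorem. The only differences are cosmetic: you justify the Minkowski-sum identity via the threshold (layer-cake) decomposition, a point the paper merely asserts, and you obtain the chamber volume by a direct iterated integral where the paper uses an explicit volume-preserving piecewise translation onto a product of simplices.
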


\smallskip

This lemma follows by a variation on  the argument in the proof of \cite[Thm~3.2]{Sta} and
\cite[Thm~2.5]{KS}.

\smallskip

\begin{proof}%[Proof of Lemma~\ref{lem:CPC-Fkl-mixed-volume}]
	For \. $0 < s,t < 1$, \. $0 < s+t < 1$, define
	\[ \iK^{(s,t)} \ := \ \big\{\. \vb \in \iK \, : \,  \vb(z_2) - \vb(z_1) \. = \. s,  \  \vb(z_3)   -  \vb(z_2) \. = \. t  \.  \big\}.
\]
	Note that \. $\iK^{(s,t)} \. = \.  s \ts \iK_1 \. + \.  t \ts \iK_2  \. + \.   (1-s-t) \ts \iK_3$\..
%	Also note that \. $\iK^{(s,t)}$ \. has affine dimension $d=n-2$.
%	
	Let us now compute the volume of \ts $\iK^{(s,t)}$\ts.

	For every \ts $L \in \Ec(P)$ \ts we denote by \ts $\Delta_{L} \subset \iK^{(s,t)}$ \ts the polytope
	\[ \Delta_L \ := \  \{\. \vb \in  \iK^{(s,t)} \. \mid \. \vb(x) \. \leq \. \vb (y) \ \text{ whenever } \ L(x) \leq  L(y) \. \}.  \]
	Note that \ts $\iK^{(s,t)}$ \ts is the union of \ts $\Delta_L$'s \ts over all linear extensions $L$ such that \ts $L(z_1)<L(z_2)<L(z_3)$, and furthermore all \ts $\Delta_L$'s \ts have pairwise disjoint interiors.
	Hence it remains to compute the volume of $\Delta_L$'s.
	
	Let \ts $L \in \cF(k,\ell)$ \ts for some \ts $k,\ell \geq 1$\ts, \.  let \ts $h:=L(z_1)$\ts, and let \ts $x_i$ \ts ($i\in \{1,\ldots,n\}$) \ts  be the $i$-th smallest element under the total order of $L$.
	Note that \ts $z_1=x_h$, \ts $z_2=x_{h+k}$\ts, \ts and \ts $z_3=x_{h+k+\ell}$\ts.
	Then    \ts $\Delta_L$ \ts consists of \ts $\vb \in \Rb^X$ \ts that satisfies these three inequalities: \ts $0 \leq \vb(x_1) \leq \vb(x_2) \leq \ldots \leq \vb(x_n) \leq 1$, \.  $\vb(x_{h+k})=\vb(x_{h})+s$\., \. $\vb(x_{h+k+\ell})=\vb(x_{h})+s+t$.
Denote by \ts $\Phi: \Rb^X\to \Rb^X$ \ts the (volume preserving) transformation defined
%on \ts $\Delta_L$ \ts
as follows:
\. $\Phi(\vb) = \wb$, where
	\begin{alignat*}{2}
		&\wb(x_i) \ = \ \vb(x_i) \quad  && \text{ if } \quad i \. \leq \. h,\\
		&\wb(x_i) \ = \ \vb(x_i)-\vb(x_h) \quad  && \text{ if } \quad h \. < \. i \. \leq \. h+k,\\
		&\wb(x_i) \ = \ \vb(x_i)-\vb(x_h)-s \quad  && \text{ if } \quad h+k \. < \. i \. \leq \. h+k+\ell,\\
		&\wb(x_i) \ = \ \vb(x_i)-s-t \quad  && \text{ if } \quad h+k+\ell \. < \. i  \. \leq \. n\ts.
		\end{alignat*}
	Then the image \. $\Phi(\Delta_L)$ \. is the set of \. $\wb \in \Rb^X$ \. that satisfies
	\begin{alignat*}{2}
		&0 \. \leq \. \wb(x_1) \. \leq \. \ldots \. \leq \. \wb(x_h) \. \leq \. \wb(x_{h+k+\ell+1}) \. \leq \. \ldots \. \leq \wb({x_n}) \. \leq 1-s-t\ts, \\
		& 0 \. \leq \. \wb(x_{h+1}) \. \leq \. \ldots \. \leq \.  \wb(x_{h+k}) \. = \. s\ts, \quad \text{and}\\
		& 0 \. \leq \. \wb(x_{h+k+1}) \. \leq \. \ldots \. \leq \.  \wb(x_{h+k+\ell}) \. = \. t\ts.
	\end{alignat*}
	This set is the direct product of three simplices and has volume
	\[ \rho(s,t) \, := \, \frac{s^{k-1}}{(k-1)!} \. \times \. \frac{t^{\ell-1}}{(\ell-1)!} \. \times \. \frac{(1-s-t)^{n-k-\ell}}{(n-k-\ell)!}\,. \]
	It follows from here that
	\begin{align*}
		&\Vol_{d}\big(\iK^{(s,t)}\big) \ = \ \sum_{k,\ell \geq 1} \, \sum_{L \ts\in\ts \cF(k,\ell)} \Vol_d(\Delta_L) \ = \ \sum_{k,\ell \geq 1} \. \sum_{L \in \cF(k,\ell)}  \. \rho(s,t)  \. \\
		&\qquad = \ \sum_{k,\ell \geq 1} \binom{n-2}{n-k-\ell, \. k-1, \. \ell-1}
		\, \frac{\aF(k,\ell)}{(n-2)!} \,\. s^{k-1} \. t^{\ell-1} \. (1-s-t)^{n-k-\ell}.
	\end{align*}
 Since the choice of $s,t$ is arbitrary, equation \eqref{eq:CPC-Fkl-mixed-volume}  follows
 from the Minkowski Theorem~\ref{thm:Minkowski}. \end{proof}

\smallskip

\begin{proof}[Proof of Theorem~\ref{thm:CPC-half}]
	Let $d=n-2$, and let \. $\iA,\iB,\iC$, $\iQ_1,\ldots, \iQ_{d-2} \subset \iK$ \. be given by
	\begin{equation}\label{eq:charlie1}
	\begin{split}
		& \iA \, \gets \, \iK_1, \quad \iB \, \gets \, \iK_2, \quad \iC \, \gets \, \iK_3, \quad \text{and}\\
		& \iQ_1,\ldots, \iQ_{d-2} \, \gets \,  \underbrace{\iK_1,\ldots, \iK_1}_{k-1}, \underbrace{\iK_2,\ldots,\iK_2}_{\ell-1}, \underbrace{\iK_3,\ldots, \iK_3}_{n-k-\ell}.
	\end{split}
	\end{equation}
	The theorem now follows by applying Lemma~\ref{lem:CPC-Fkl-mixed-volume} into Theorem~\ref{thm:Favard}.
\end{proof}

\medskip

\subsection{Applications to cross products} \label{ss:CPC-app}
We now quickly derive the key applications of mixed volume cross-ratio inequalities
for the cross product inequalities.

\smallskip

\begin{prop}\label{prop:cpc1+eps}
Suppose that \. $\aFr(k,\ell)\. \aFr(k+1,\ell+1)>0$. Then:
\begin{align*}
\frac{ \aFr(k+1,\ell) \. \aFr(k,\ell+1) }{\aFr(k,\ell) \. \aFr(k+1,\ell+1)}
\ \geq \  \frac{1}{2} \ + \  \frac{\sqrt{\aFr(k,\ell+2)\. \aFr(k+2,\ell)} }{2 \. \aFr(k+1,\ell+1)}\,.
\end{align*}\end{prop}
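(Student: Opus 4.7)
The strategy is to repeat the geometric setup from the proof of Theorem~\ref{thm:CPC-half}, but to replace the Favard bound~\eqref{eq:Favard} by the sharper cross-ratio inequality from Proposition~\ref{prop:Vol-CPC-1}; the extra $\sqrt{\iV_{\ibQ}(\iA,\iA)\.\iV_{\ibQ}(\iB,\iB)}/\iV_{\ibQ}(\iA,\iB)$ term in~\eqref{eq:Vol-CPC-1} will translate directly into the desired correction $\sqrt{\aF(k+2,\ell)\.\aF(k,\ell+2)}/\aF(k+1,\ell+1)$.

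Concretely, I would set $\iA := \iK_1$, $\iB := \iK_2$, $\iC := \iK_3$, and take $\ibQ$ to be the tuple consisting of $k-1$ copies of $\iK_1$, $\ell-1$ copies of $\iK_2$, and $n-k-\ell-2$ copies of $\iK_3$; the third count is nonnegative since the hypothesis $\aF(k+1,\ell+1)>0$ forces $n \ge k+\ell+3$. With this choice, Lemma~\ref{lem:CPC-Fkl-mixed-volume} converts each of the six mixed volumes
\[
\iV_{\ibQ}(\iA,\iC),\ \iV_{\ibQ}(\iB,\iC),\ \iV_{\ibQ}(\iA,\iB),\ \iV_{\ibQ}(\iC,\iC),\ \iV_{\ibQ}(\iA,\iA),\ \iV_{\ibQ}(\iB,\iB)
\]
appearing in~\eqref{eq:Vol-CPC-1} into $(n-2)!^{-1}$ times
\[
\aF(k+1,\ell),\ \aF(k,\ell+1),\ \aF(k+1,\ell+1),\ \aF(k,\ell),\ \aF(k+2,\ell),\ \aF(k,\ell+2),
\]
respectively, simply by matching the multiplicity of each $\iK_i$ in the mixed volume against the $(k'-1,\ell'-1,n-k'-\ell')$ exponent pattern of the lemma.

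Under this dictionary, the positivity hypothesis $\aF(k,\ell)\.\aF(k+1,\ell+1)>0$ is precisely $\iV_{\ibQ}(\iA,\iB)\.\iV_{\ibQ}(\iC,\iC)>0$, so Proposition~\ref{prop:Vol-CPC-1} applies, and substituting the six identifications into~\eqref{eq:Vol-CPC-1} and cancelling the common $(n-2)!$ factors delivers the claimed inequality. No step is a genuine obstacle: the proof reduces to substitution once Lemma~\ref{lem:CPC-Fkl-mixed-volume} and Proposition~\ref{prop:Vol-CPC-1} are in hand; the only care needed is in choosing $\ibQ$ so that all six mixed volumes simultaneously land on the intended $\aF$-values.
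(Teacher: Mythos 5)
Your proof is correct and is exactly the paper's argument: choose $\iA,\iB,\iC$ and $\ibQ$ as in \eqref{eq:charlie1} and feed Lemma~\ref{lem:CPC-Fkl-mixed-volume} into Proposition~\ref{prop:Vol-CPC-1}, with the six mixed volumes matching the six $\aF$-values precisely as you list them. Your count of $n-k-\ell-2$ copies of $\iK_3$ in $\ibQ$ (so that $\ibQ$ has $d-2=n-4$ entries, with nonnegativity guaranteed by $\aF(k+1,\ell+1)>0$) is the correct one, silently fixing the $n-k-\ell$ written in the paper's display \eqref{eq:charlie1}.
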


\smallskip

\begin{proof}
	Let $d=n-2$, and let \. $\iA,\iB,\iC$, $\iQ_1,\ldots, \iQ_{d-2} \subset \iK$ \. be given by \eqref{eq:charlie1}.
	The conclusion of the proposition now follows from Lemma~\ref{lem:CPC-Fkl-mixed-volume} and Proposition~\ref{prop:Vol-CPC-1}.
\end{proof}

\smallskip

%From equation~\ref{eq:squares}, taking square roots on both sides, we get the following inequality when some qunatities are 0
\begin{prop}\label{p:cpc+eps0}
Suppose that \. $\aFr(k,\ell)\. \aFr(k+1,\ell+1)>0$ \. and \. $\aFr(k,\ell+2)=0$. Then:
\begin{align*}
\frac{ \aFr(k+1,\ell)\aFr(k,\ell+1) }{\aFr(k+1,\ell+1)\aFr(k,\ell)} \ \geq \  \bigg(1 + \sqrt{ 1 - \frac{ \aFr(k,\ell) \aFr(k+2,\ell)}{\aFr(k+1,\ell)^2} } \. \bigg)^{-1}.
\end{align*}
\end{prop}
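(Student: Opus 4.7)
The plan is to prove this as a direct analogue of Proposition~\ref{prop:cpc1+eps}, using Proposition~\ref{prop:Vol-CPC-2} in place of Proposition~\ref{prop:Vol-CPC-1}. The key observation is that the degenerate hypothesis $\aFr(k,\ell+2)=0$ translates, under the order polytope dictionary of Lemma~\ref{lem:CPC-Fkl-mixed-volume}, into precisely the hypothesis $\iV_{\ibQ}(\iB,\iB)=0$ needed by Proposition~\ref{prop:Vol-CPC-2}.

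Concretely, I would set $d=n-2$ and use the same substitution \eqref{eq:charlie1} as in the proof of Proposition~\ref{prop:cpc1+eps}:
\[
\iA \,\gets\, \iK_1, \quad \iB \,\gets\, \iK_2, \quad \iC \,\gets\, \iK_3,
\]
with $\ibQ$ given by $k-1$ copies of $\iK_1$, $\ell-1$ copies of $\iK_2$, and $n-k-\ell-2$ copies of $\iK_3$. (The hypothesis $\aFr(k+1,\ell+1)>0$ forces $n-k-\ell-2\ge 0$, so this multiset is well-defined.) Lemma~\ref{lem:CPC-Fkl-mixed-volume} then identifies
\[
\iV_{\ibQ}(\iA,\iB) = \tfrac{\aFr(k+1,\ell+1)}{(n-2)!},\ \iV_{\ibQ}(\iC,\iC) = \tfrac{\aFr(k,\ell)}{(n-2)!},\ \iV_{\ibQ}(\iA,\iC) = \tfrac{\aFr(k+1,\ell)}{(n-2)!},
\]
and likewise $\iV_{\ibQ}(\iB,\iC) = \aFr(k,\ell+1)/(n-2)!$, $\iV_{\ibQ}(\iA,\iA) = \aFr(k+2,\ell)/(n-2)!$, and $\iV_{\ibQ}(\iB,\iB) = \aFr(k,\ell+2)/(n-2)!$.

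With this dictionary in hand, the hypotheses of the proposition match those of Proposition~\ref{prop:Vol-CPC-2}: the assumption $\aFr(k,\ell)\.\aFr(k+1,\ell+1)>0$ becomes $\iV_{\ibQ}(\iA,\iB)\.\iV_{\ibQ}(\iC,\iC)>0$, and $\aFr(k,\ell+2)=0$ becomes $\iV_{\ibQ}(\iB,\iB)=0$. Invoking Proposition~\ref{prop:Vol-CPC-2} and substituting the mixed-volume values back via the identifications above then yields the stated bound
\[
\frac{\aFr(k+1,\ell)\.\aFr(k,\ell+1)}{\aFr(k+1,\ell+1)\.\aFr(k,\ell)} \ \geq \ \bigg(1 + \sqrt{1 - \frac{\aFr(k,\ell)\.\aFr(k+2,\ell)}{\aFr(k+1,\ell)^2}}\.\bigg)^{-1}.
\]

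There is no real obstacle — Proposition~\ref{prop:Vol-CPC-2} has already done all the analytic work. The only thing worth double-checking is the bookkeeping of multiplicities in $\ibQ$ so that each mixed volume $\iV_{\ibQ}(\cdot,\cdot)$ reproduces the correct $\aFr(\cdot,\cdot)$, but this is identical to the bookkeeping used in the proof of Proposition~\ref{prop:cpc1+eps} and is guaranteed by the positivity assumption $\aFr(k+1,\ell+1)>0$.
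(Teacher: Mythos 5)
Your proof is correct and is essentially identical to the paper's: the paper likewise sets $d=n-2$, uses the substitution \eqref{eq:charlie1}, and derives the bound by combining Lemma~\ref{lem:CPC-Fkl-mixed-volume} with Proposition~\ref{prop:Vol-CPC-2}. Your explicit mixed-volume dictionary (with $n-k-\ell-2$ copies of $\iK_3$ in $\ibQ$) is the right bookkeeping, spelled out in more detail than the paper bothers to.
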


\smallskip
\begin{proof}
	Let $d=n-2$, and let \. $\iA,\iB,\iC$, $\iQ_1,\ldots, \iQ_{d-2} \subset \iK$ \. be given by \eqref{eq:charlie1}.
	The conclusion of the proposition now follows from Lemma~\ref{lem:CPC-Fkl-mixed-volume} and Proposition~\ref{prop:Vol-CPC-2}.
\end{proof}

\medskip

\subsection{More half-CPC inequalities}\label{ss:CPC-other}
We start with the following half-versions of \eqref{eq:CPC-2} and  \eqref{eq:CPC-3}.
The proofs follow the proof of Theorem~\ref{thm:CPC-half} given above.

\smallskip

\begin{lemma}\label{lem:CPC-other-half}
			For every \ts $k,\ell \geq 1$, we have:
	\begin{align}\label{eq:biCPC-2}\tag{half-CPC1}
		\aFr(k+2,\ell) \. \aFr(k,\ell+1) \ &\leq \  2 \. \aFr(k+1,\ell) \. \aFr(k+1,\ell+1),\\
		  \label{eq:biCPC-3}\tag{half-CPC2}
		  \aFr(k,\ell+2) \. \aFr(k+1,\ell) \ &\leq \  2 \. \aFr(k,\ell+1) \. \aFr(k+1,\ell+1).
	\end{align}
\end{lemma}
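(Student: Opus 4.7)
The plan is to mimic the proof of \eqref{eq:biCPC-1} in Theorem~\ref{thm:CPC-half}, changing only the assignment of convex bodies to $\iA,\iB,\iC$ in the Favard inequality. Lemma~\ref{lem:CPC-Fkl-mixed-volume} lets us rewrite each quantity $\aFr(k',\ell')$ as a (normalized) mixed volume of a string of $\iK_1$'s, $\iK_2$'s, and $\iK_3$'s, so the task reduces to finding, for each of \eqref{eq:biCPC-2} and \eqref{eq:biCPC-3}, a single choice of $\iA,\iB,\iC$ together with a common vector of convex bodies $\ibQ$ of length $d-2=n-4$ whose four resulting mixed volumes $\iV_{\ibQ}(\iA,\iC)$, $\iV_{\ibQ}(\iB,\iC)$, $\iV_{\ibQ}(\iA,\iB)$, $\iV_{\ibQ}(\iC,\iC)$ match the four $\aFr$-values appearing in the inequality. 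Once such a matching is in place, Theorem~\ref{thm:Favard} (Favard's inequality) delivers the factor of $2$ directly.

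For \eqref{eq:biCPC-2}, I would take
\[ \ibQ \ := \ (\underbrace{\iK_1,\ldots,\iK_1}_{k-1},\underbrace{\iK_2,\ldots,\iK_2}_{\ell-1},\underbrace{\iK_3,\ldots,\iK_3}_{n-k-\ell-2}), \qquad \iA\gets \iK_2,\ \iB\gets \iK_3,\ \iC\gets \iK_1. \]
Appending to $\ibQ$ the pairs $(\iA,\iC)$, $(\iB,\iC)$, $(\iA,\iB)$, $(\iC,\iC)$ and applying Lemma~\ref{lem:CPC-Fkl-mixed-volume} gives, respectively, $\aFr(k+1,\ell+1)$, $\aFr(k+1,\ell)$, $\aFr(k,\ell+1)$, $\aFr(k+2,\ell)$ (up to the common factor $(n-2)!$), which is exactly the pairing needed so that $\iV_{\ibQ}(\iA,\iC)\iV_{\ibQ}(\iB,\iC)\ge \tfrac12 \iV_{\ibQ}(\iA,\iB)\iV_{\ibQ}(\iC,\iC)$ becomes \eqref{eq:biCPC-2}. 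The inequality \eqref{eq:biCPC-3} is handled identically with the symmetric choice $\iA\gets \iK_1,\ \iB\gets \iK_3,\ \iC\gets \iK_2$ and the same $\ibQ$; the four mixed volumes then read off as $\aFr(k+1,\ell+1)$, $\aFr(k,\ell+1)$, $\aFr(k+1,\ell)$, $\aFr(k,\ell+2)$.

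Before invoking Favard's inequality one must check the positivity hypothesis $\iV_{\ibQ}(\iA,\iB)\,\iV_{\ibQ}(\iC,\iC)>0$. If either factor on the left-hand side of \eqref{eq:biCPC-2} or \eqref{eq:biCPC-3} vanishes the inequality is trivial, so we may assume both are positive; the remaining potential issue $\iV_{\ibQ}(\iC,\iC)=0$ corresponds to $\aFr(k+2,\ell)=0$ (respectively $\aFr(k,\ell+2)=0$), which again collapses the left-hand side to $0$. Thus no serious obstacle arises: the entire argument is a bookkeeping exercise in matching exponents of $\iK_1,\iK_2,\iK_3$, and the only subtlety is selecting $\iC$ so that $\iV_{\ibQ}(\iC,\iC)$ produces the correct ``extreme'' term ($\aFr(k+2,\ell)$ or $\aFr(k,\ell+2)$) on the left-hand side.
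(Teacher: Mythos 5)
Your proposal is correct and is essentially the paper's own proof: the paper uses the same $\ibQ$ from \eqref{eq:charlie1} and the choices $\iA\gets\iK_3,\ \iB\gets\iK_2,\ \iC\gets\iK_1$ (resp.\ $\iA\gets\iK_3,\ \iB\gets\iK_1,\ \iC\gets\iK_2$), which agree with yours up to swapping $\iA\leftrightarrow\iB$, under which Favard's inequality is invariant. Your bookkeeping of the exponents and your explicit check of the positivity hypothesis are both correct (and in fact slightly more careful than the paper's terse write-up).
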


\smallskip

\begin{proof}
%	We again let \ts $d=n-2$ \ts and let \ts $\iQ_1,\ldots, \iQ_{d-2} \subset \iK$ \ts be given by
%	\[ 		 \iQ_1,\ldots, \iQ_{d-2} \ := \  \underbrace{\iK_1,\ldots, \iK_1}_{n-k-\ell-2}, \underbrace{\iK_2,\ldots,\iK_2}_{k-1}, \underbrace{\iK_3,\ldots, \iK_3}_{\ell-1}.\]
We again let $d=n-2$ and let  \. $\iQ_1,\ldots, \iQ_{d-2} \subset \iK$ \. be given by \eqref{eq:charlie1}.
Then
	\eqref{eq:biCPC-2} follows by applying  Lemma~\ref{lem:CPC-Fkl-mixed-volume} into Theorem~\ref{thm:Favard}, with the choice
	\[ \iA \.\gets \.  \iK_3\., \quad \iB \.\gets \.\iK_2 \quad \text{and} \quad \iC \.\gets \. \iK_1\..\]
	Similarly, \eqref{eq:biCPC-3} follows from the choice
		\[ \iA \.\gets \.  \iK_3\., \quad \iB \.\gets \. \iK_1 \quad \text{and} \quad \iC \.\gets \. \iK_2\..\]
	This completes the proof.
\end{proof}

\smallskip

Note that  \eqref{eq:CPC-2} is a dual inequality to  \eqref{eq:CPC-3} in the following sense.
Let \. $P^\ast:=(X,\prec^\ast)$ \. be the \defnb{dual poset} of~$P$,
i.e. \. $x\prec^\ast y$ \. if and only if \. $x\succ y$\..
Let \. $z_1^\ast:=z_3$, \. $z_2^\ast:=z_2$, \. $z_3^\ast:=z_1$.
Then \. $\aF_{P,z_1,z_2,z_3}(k,\ell) = \aF_{P^\ast, z_1^\ast,z_2^\ast,z_3^\ast}(\ell,k)$ \.
by the maps that send \ts linear extensions of $P$ \ts  to \ts linear extensions of $P^\ast$ \ts by reversing the total order.
%Thus,  \eqref{eq:CPC-2} for all posets, is equivalent to, proving \eqref{eq:CPC-3} for all posets.

On the other hand,
one can think of \eqref{eq:CPC-2} and \eqref{eq:CPC-3} as negative  variants of \eqref{eq:CPC}, in the following sense.
%Let \ts $k\leq -1$ \ts and  \ts $\ell \geq 1$.
Let \ts $z_1':=z_2$, \ts $z_2':=z_1$, \ts $z_3':=z_3$, and we write \ts $\aF= \aF_{P,z_1,z_2,z_3}$ \ts and \ts \ts $\aF'= \aF_{P,z_1',z_2',z_3'}$.
Then, for every integer \ts $k,\ell$,
\begin{align*}
	\aF(k,\ell) \ &= \ \big|\{L \in \Ec(P) \, : \, L(z_2)-L(z_1)=k, \.  L(z_3)-L(z_2)=\ell \}\big|\\
	\ &= \  \big|\{L \in \Ec(P) \, : \, L(z_1)-L(z_2)=-k, \.  L(z_3)-L(z_1)=\ell+k \}\big|\\
	\ &= \ \aF'(-k, \ell+ k).
\end{align*}
Let \ts $k':=-k-1$ \ts and \ts $\ell':=\ell+k$. Under this change of variable, \eqref{eq:CPC}  then becomes
\[ \aF'(k'+1,\ell') \. \aF'(k',\ell'+2) \  \leq \ \aF'(k',\ell'+1) \. \aF'(k'+1,\ell'+1),  \]
which coincides with \eqref{eq:CPC-3} in this case.

Note, however, that \eqref{eq:CPC} does not imply \eqref{eq:CPC-2} and vice versa, since $k'$ are necessarily negative under this transformation. In fact, as mentioned in the introduction, we will present counterexamples to \eqref{eq:CPC-2} in~$\S$\ref{ss:explicit-gen-CPC}.
\medskip

\subsection{Variations on the theme}\label{ss:CPC-var}
The following three inequalities are variations on \eqref{eq:CPC}.

%Instead, the three properties \eqref{eq:CPC}--\eqref{eq:CPC-3} are  related to each other in the following sense.

\smallskip

\begin{lemma}\label{lem:logC}
For every \. $k,\ell \geq 1$ \. we have:
\begin{align}
			\label{eq:LogC-1}\tag{LogC-1}
			\aFr(k+1,\ell+1)^2 \  &\geq \ \aFr(k+2,\ell) \. \aFr(k,\ell+2),\\
			\label{eq:LogC-2}\tag{LogC-2}
		\aFr(k,\ell+1)^2 \  &\geq \ \aFr(k,\ell) \. \aFr(k,\ell+2),\\
			\label{eq:LogC-3}\tag{LogC-3}
		\aFr(k+1,\ell)^2 \  &\geq \ \aFr(k,\ell) \. \aFr(k+2,\ell).
		\end{align}
\end{lemma}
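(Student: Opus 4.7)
All three inequalities in Lemma~\ref{lem:logC} are log-concavity statements for $\aF(\cdot,\cdot)$ viewed as a function on lattice points. The plan is to deduce them directly from the Alexandrov--Fenchel inequality \eqref{eq:AF} by interpreting $\aF(k,\ell)$ as a mixed volume via Lemma~\ref{lem:CPC-Fkl-mixed-volume}, exactly as \eqref{eq:biCPC-1}, \eqref{eq:biCPC-2}, \eqref{eq:biCPC-3} were obtained from Favard's inequality in the preceding subsections. For each of the three statements, both sides differ only by shifting two indices in the multiset $(\iK_1^{k-1},\iK_2^{\ell-1},\iK_3^{n-k-\ell})$; so the natural move is to pull out the ``common'' $(d-2)$-tuple $\ibQ$ and apply \eqref{eq:AF} to the remaining two slots.

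Concretely, fix $d=n-2$ and let
\[
\ibQ \ := \ \bigl(\ts \underbrace{\iK_1,\ldots,\iK_1}_{k-1}, \ \underbrace{\iK_2,\ldots,\iK_2}_{\ell-1}, \ \underbrace{\iK_3,\ldots,\iK_3}_{n-k-\ell-2} \ts\bigr).
\]
For \eqref{eq:LogC-1} I would set $\iA \gets \iK_1$, $\iB \gets \iK_2$; Lemma~\ref{lem:CPC-Fkl-mixed-volume} then gives
\[
(n-2)!\. \iV_{\ibQ}(\iA,\iA)=\aFr(k+2,\ell),\quad (n-2)!\. \iV_{\ibQ}(\iB,\iB)=\aFr(k,\ell+2),\quad (n-2)!\. \iV_{\ibQ}(\iA,\iB)=\aFr(k+1,\ell+1),
\]
and \eqref{eq:AF} yields the desired inequality. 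For \eqref{eq:LogC-2} I would instead take $\iA \gets \iK_2$, $\iB \gets \iK_3$, so that the three mixed volumes compute $\aFr(k,\ell+2)$, $\aFr(k,\ell)$, $\aFr(k,\ell+1)$ respectively. For \eqref{eq:LogC-3} I would take $\iA \gets \iK_1$, $\iB \gets \iK_3$, giving $\aFr(k+2,\ell)$, $\aFr(k,\ell)$, $\aFr(k+1,\ell)$.

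There is essentially no obstacle beyond bookkeeping: the three inequalities are parallel, each reducing to a single application of \eqref{eq:AF} once the correct partition of the $d$-tuple into $(\iA,\iB,\ibQ)$ is fixed. The only thing worth checking is the boundary range: when $n-k-\ell-2<0$ the prescribed $\ibQ$ does not exist, but in that case $\aFr(k+2,\ell)=\aFr(k,\ell+2)=0$ by a direct counting argument (there are too few elements to place between $z_1$ and $z_3$), so each of \eqref{eq:LogC-1}--\eqref{eq:LogC-3} holds trivially. A similar trivial check covers the degenerate cases $k=0$ or $\ell=0$ that arise inside~$\ibQ$.
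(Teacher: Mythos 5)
Your proposal is correct and is essentially the paper's own proof: the paper likewise fixes $\iA\gets\iK_1$, $\iB\gets\iK_2$, $\iC\gets\iK_3$ with the common tuple $\ibQ$ from \eqref{eq:charlie1} and applies \eqref{eq:AF} to the three pairs, then translates via Lemma~\ref{lem:CPC-Fkl-mixed-volume}. Your version is in fact slightly more careful, since you correctly take $n-k-\ell-2$ copies of $\iK_3$ in $\ibQ$ (so that the total degree is $d=n-2$) and you check the boundary cases where this count would be negative.
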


\smallskip

\begin{proof}
Let $d=n-2$, and let \. $\iA,\iB,\iC$, $\iQ_1,\ldots, \iQ_{d-2} \subset \iK$ \. be given by \eqref{eq:charlie1}.
		It follows from the Alexandrov--Fenchel inequality \eqref{eq:AF} that
		\begin{align*}
								\iV_{\ibQ}(\iA,\iB)^2  \ &\geq \ \iV(\iA,\iA) \  \iV(\iB,\iB),\\
								\iV_{\ibQ}(\iB,\iC)^2  \ &\geq \ \iV(\iB,\iB) \  \iV(\iC,\iC),\\
								\iV_{\ibQ}(\iA,\iC)^2  \ &\geq \ \iV(\iA,\iA) \  \iV(\iC,\iC).
		\end{align*}
		By applying Lemma~\ref{lem:CPC-Fkl-mixed-volume}, we get the desired inequalities.
\end{proof}

\smallskip

\begin{rem}
The inequalities \eqref{eq:LogC-1}, \eqref{eq:LogC-2} and \eqref{eq:LogC-3}
can be viewed as extensions of Stanley's and Kahn--Saks inequalities, cf.~\cite{CPP1,CPP2}.
\end{rem}

\smallskip

\begin{cor}
\label{cor:CPC-logconcave-product}
Suppose that \. $\aFr(k,\ell) \. \aFr(k+1,\ell+1) >0$\ts.  Then we have:
\[  \frac{\aFr(k+1,\ell) \. \aFr(k,\ell+1)}{\aFr(k,\ell) \. \aFr(k+1,\ell+1)} \ \geq \ \frac{\aFr(k+2,\ell) \. \aFr(k,\ell+2)}{\aFr(k+1,\ell+1)^2} \,.  \]
In particular, if \eqref{eq:LogC-1} is an equality, then the inequality \eqref{eq:CPC} holds.
\end{cor}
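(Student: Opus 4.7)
My plan is to deduce the corollary directly from the three log-concavity inequalities already established in Lemma~\ref{lem:logC}, without invoking any further mixed volume machinery. The key observation is that squaring the cross-product ratio on the left-hand side of the claimed bound factors neatly as a product of two Alexandrov--Fenchel type inequalities, namely \eqref{eq:LogC-2} and \eqref{eq:LogC-3}, while the right-hand side is simultaneously controlled from above by \eqref{eq:LogC-1}.

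Concretely, I would introduce the shorthand
\[
u \ := \ \frac{\aFr(k+1,\ell)\,\aFr(k,\ell+1)}{\aFr(k,\ell)\,\aFr(k+1,\ell+1)}, \qquad v \ := \ \frac{\aFr(k+2,\ell)\,\aFr(k,\ell+2)}{\aFr(k+1,\ell+1)^2}\ts,
\]
both of which are nonnegative real numbers that are well-defined under the assumption $\aFr(k,\ell)\,\aFr(k+1,\ell+1) > 0$. Multiplying \eqref{eq:LogC-2} and \eqref{eq:LogC-3} gives
\[
\aFr(k,\ell+1)^2 \. \aFr(k+1,\ell)^2 \ \geq \ \aFr(k,\ell)^2 \. \aFr(k+2,\ell) \. \aFr(k,\ell+2)\ts,
\]
and dividing both sides by $\aFr(k,\ell)^2 \. \aFr(k+1,\ell+1)^2$ rearranges exactly into $u^2 \geq v$. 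Separately, \eqref{eq:LogC-1} is precisely the statement $v \leq 1$.

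It remains only to combine these two inequalities. Since $u \geq 0$ and $v \in [0,1]$, the bound $u^2 \geq v$ gives $u \geq \sqrt{v} \geq v$, which is exactly the inequality in the corollary. For the ``in particular'' clause, equality in \eqref{eq:LogC-1} means $v = 1$, and then $u \geq \sqrt{v} = 1$ is precisely \eqref{eq:CPC}.

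I do not anticipate a genuine obstacle in carrying this out. The only mildly nontrivial move is the decision to square the LHS ratio rather than attempt to compare $u$ and $v$ directly; the squaring is what exposes the product of two AF-type inequalities available in Lemma~\ref{lem:logC}, and without it the trio of log-concavity bounds does not obviously combine into the desired statement.
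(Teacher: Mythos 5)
Your proof is correct and follows essentially the same route as the paper: both arguments rest entirely on the three inequalities of Lemma~\ref{lem:logC}, the paper multiplying all three at once and taking a square root, while you multiply \eqref{eq:LogC-2} and \eqref{eq:LogC-3} to get $u^2\ge v$ and use \eqref{eq:LogC-1} separately as $v\le 1$. As a minor bonus, your bookkeeping yields the slightly sharper intermediate bound $u\ge\sqrt{v}$, but the conclusion and the ingredients are the same.
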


\smallskip

\begin{proof}
Taking the product of \eqref{eq:LogC-1}, \eqref{eq:LogC-2} and \eqref{eq:LogC-3}, we have:
\[
\aF(k+1,\ell) \. \aF(k,\ell+1) \. \aF(k+1,\ell+1) \  \geq \
\aF(k,\ell) \. \aF(k+2,\ell) \. \aF(k,\ell+2).
\]
By the assumptions, this implies the result.\footnote{Alternatively,
the corollary follows immediately from Proposition~\ref{prop:cpc1+eps}. }
\end{proof}

\smallskip

\begin{proof}[Proof of Theorem~\ref{t:CPC-two-three}]
	First, assume that both \eqref{eq:CPC-2} and \eqref{eq:CPC-3} are false:
	\[ 		
\aligned & \aF(k+2,\ell) \. \aF(k,\ell+1) \ >  \   \aF(k+1,\ell) \. \aF(k+1,\ell+1) \quad \text{ and } \\
&	\aF(k,\ell+2) \. \aF(k+1,\ell) \ > \   \aF(k,\ell+1) \. \aF(k+1,\ell+1).
\endaligned \]
	Taking the product of both inequalities, we then get
	\[  \aF(k+2,\ell) \. \aF(k,\ell+2) \  > \ \aF(k+1,\ell+1)^2,  \]
	which contradicts \eqref{eq:LogC-1}.
	The proofs for the  other cases are analogous.
\end{proof}

\bigskip

\bigskip

\section{Vanishing of poset inequalities}\label{s:vanish}

\subsection{Poset parameters} \label{ss:vanish-par}
For an element \ts $x \in X$, let \.
$B(x) := \big\{y \in X \. : \. y \preccurlyeq x  \big\}$ \.
denote the \defn{lower order ideal} \ts generated by~$x$, and let \. $b(x):=|B(x)|$.
Similarly, let \.
$B^\ast(x) := \big\{y \in X \. : \. y \succcurlyeq  x  \big\}$ \.
denote the \defn{upper order ideal} \ts generated by~$x$, and let \. $b^\ast(x):=|B^\ast(x)|$.
%\com{SH}{I switched the upper and lower ideal definition and notation as I think the previous version was false. This is such an important switch that at least another person needs to double-check this switch.}

By analogy, let \. $B(x,y) = \{ z\in X \. : \.  x \preccurlyeq z \preccurlyeq y\}$ \. be the \defn{interval} \ts
between $x$ and~$y$, and let \. $b(x,y) = |B(x,y)|$.
Without loss of generality we can always assume that \ts $z_1\prec z_2 \prec z_3$\ts, since
otherwise these relations can be added to the poset.  We then have \. $b(z_1,z_2)$, \ts $b(z_2,z_3) \geq 2$.

\smallskip

Let \. $x,y\in X$ \. be two incomparable elements in~$P$, write \. $y\ts\|\ts{}x$.  Define
$$\cL(x,y) \. := \. \big\{ \ts z\in X \, : \, z\ts\|\ts{}y\., \ z\preccurlyeq x \ts \big\} \quad \text{and}
\quad u(x,y) \. := \. |\cL(x,y)|\ts.
$$
Similarly, define
$$
\cL^*(x,y) \. := \. \big\{ \ts z \in X \, : \,  z\ts\|\ts{}y\., \ z\succcurlyeq x \ts \big\} \quad \text{and}
\quad u^*(x,y) \. := \. |\cL^*(x,y)|\ts.
$$
Finally, let
$$
t(x) \. := \.  \max \big\{ \ts u(x,y) \, : \, y\in X, \, y \ts\|\ts{} x\ts\big\} \quad \text{and} \quad
t^*(x) \. := \.  \max \big\{ \ts u^*(x,y) \, : \, y\in X, \, y\ts\|\ts{}x \ts\big\},
$$
and we define \. $t(x):=1, \. t^*(x):=1$ \. if every element $y \in X$ is comparable to $x$.
Clearly, \. $t(x) \leq b(x)$ \. and \. $t^*(x) \leq b^*(x)$, by definition.

In this notation, recall that a poset \ts $P=(X,\prec)$ \ts is \ts \defn{$t$-thin with respect to~$A$}, if for every \.
$u \in X\sm A$ \. we have \. $n-b(u)-b^*(u)\le t-1$.  Similarly, recall that a poset \ts $P=(X,\prec)$ \ts
is \ts \defn{$t$-flat with respect to~$A$}, if for every \. $u \in A$ \. we have \. $b(u)+b^*(u)\le t+1$.
Note that \. $t(u),t^*(u) \leq t$ \. in either case.

\medskip

\subsection{Vanishing conditions} \label{ss:vanish-vanish}
%
%State vanishing conditions for $\aF(i,j)$ and use that to make further conclusions. (use generalized Stanley's conditions)
%
%\com{SH}{Some of the previous bounds were missing a factor of +/-1. I have fixed them.}
%
Recall the following conditions for existence of restricted linear extensions.

\smallskip

\begin{thm}[{\rm \ts \cite[Thm~1.12]{CPP}\ts}{}]\label{thm:stanley_vanish}
Let \. $P=(X,\prec)$ \. be a poset with \. $|X|=n$ \. elements, and let \. $z_1,\ldots,z_r\in X$ \.
be distinct elements such that \. $z_1 \prec z_2 \prec \cdots \prec z_r$ \ts.
Fix integers \. $1\leq a_1<a_2<\cdots<a_r \leq n$.
Then there exists a linear extension \. $L\in \Ec(P)$ \. with \. $L(z_i)=a_i$ \.
for all \. $1\le i \le r$ \, {\underline{\rm  if and only if}}
\begin{equation}\label{eq:Sta-gen-vanish}
\left\{\,\aligned
& b(z_i)\. \le \. a_i \.,  \ \, b^*(z_i)\le n-a_i+1 \ \  \text{for all} \ \ 1\.\le \. i \. \le \. r\ts, \ \, \text{and} \\
& a_j \. - \. a_i \, \geq \, b(z_i,z_j) -1  \ \  \text{for all} \ \  1\.\le \. i \. < \. j \. \le \. r\ts.
\endaligned\right.
\end{equation}
\end{thm}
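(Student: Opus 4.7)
The necessity of the three conditions in \eqref{eq:Sta-gen-vanish} is immediate: if $L$ is a linear extension with $L(z_i)=a_i$, then $B(z_i)$ injects under $L$ into $\{1,\ldots,a_i\}$, $B^*(z_i)$ injects into $\{a_i,\ldots,n\}$, and for $i<j$ the interval $B(z_i,z_j)$ injects into $\{a_i,\ldots,a_j\}$, yielding the three cardinality bounds.

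For sufficiency I would induct on $n=|X|$, at each step placing a carefully chosen element at position $n$ and deleting it. If $a_r=n$, the bound $b^*(z_r)\le 1$ forces $z_r$ to be maximal in $P$; set $L(z_r):=n$ and recurse on $P\setminus\{z_r\}$ with the reduced chain $z_1\prec\cdots\prec z_{r-1}$ at positions $a_1,\ldots,a_{r-1}$, where each of the three conditions is routinely preserved (using $b^*_{P\setminus\{z_r\}}(z_i)=b^*_P(z_i)-1$ for $i<r$). If $a_r<n$, I would pick a maximal element $x\in X\setminus\{z_1,\ldots,z_r\}$ satisfying $x\succcurlyeq z_i$ for every index $i$ in the tight set $I:=\{\,i:b^*_P(z_i)=n-a_i+1\,\}$, set $L(x):=n$, and recurse on $P\setminus\{x\}$ under the same $a_i$'s. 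Maximality of $x$ together with $x\ne z_j$ for all $j$ gives $x\notin B(z_i)$ and $x\notin B(z_i,z_j)$, preserving the first and third conditions; for the second, tight indices $i\in I$ satisfy $x\succcurlyeq z_i$ so both sides of the bound drop by one, while for slack indices $i\notin I$ the strict inequality $b^*_P(z_i)\le n-a_i$ already absorbs the decrease in the right-hand side.

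The crux is establishing existence of $x$ when $a_r<n$. Put $i^*:=\max I$ if $I\ne\emptyset$; since $I$ sits along the chain $z_1\prec\cdots\prec z_r$, any $y\succcurlyeq z_{i^*}$ automatically satisfies $y\succcurlyeq z_i$ for all $i\in I$. The element $z_{i^*}$ is not maximal in $P$: either $i^*<r$, in which case $z_r\succ z_{i^*}$, or $i^*=r$, in which case tightness gives $b^*(z_r)=n-a_r+1\ge 2$ from $a_r<n$. Consequently $B^*(z_{i^*})\setminus\{z_{i^*}\}$ is nonempty and contains at least one maximal element of $P$. Such an element is automatically distinct from every $z_j$: a maximal $z_j$ must equal $z_r$ (since $z_j\prec z_r$ for $j<r$ precludes maximality), and if the only maximal element of $P$ lying above $z_{i^*}$ were $z_r$, then $B^*(z_{i^*})=B(z_{i^*},z_r)$; combining the tightness $b^*(z_{i^*})=n-a_{i^*}+1$ with the interval bound $b(z_{i^*},z_r)\le a_r-a_{i^*}+1$ would yield $n\le a_r$, contradicting $a_r<n$. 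When $I=\emptyset$, the analogous argument produces a maximal element of $P$ other than $z_r$, since otherwise $z_r$ would be the unique maximum and $b^*(z_r)=1=n-a_r+1$ would again force $a_r=n$. This closes the induction.
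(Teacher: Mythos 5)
The paper does not prove Theorem~\ref{thm:stanley_vanish} at all: it is imported as a black box from \cite[Thm~1.12]{CPP} (per \S\ref{ss:finrem-DD}, the known proofs go via combinatorics of words or via a geometric argument in \cite{MS22}). Your self-contained greedy induction --- peel off a maximal element at position $n$, chosen to dominate every ``tight'' $z_i$ --- is therefore a genuinely different and more elementary route. The core of it is sound: necessity is routine; the bookkeeping showing that all three conditions survive the deletion of $x$ is correct (maximality of $x$ and $x\neq z_j$ give $x\notin B(z_i)$ and $x\notin B(z_i,z_j)$, and the tight/slack dichotomy handles $b^*$); and the key existence step is right, including the nice observation that if $z_r$ were the only maximal element above $z_{i^*}$ then $B^*(z_{i^*})\subseteq B(z_{i^*},z_r)$, which combined with tightness and the interval bound forces $n\le a_r$. (When $i^*=r$ that interval bound is not among the hypotheses, but there the element $x$ satisfies $x\succ z_r$ and so is automatically distinct from every $z_j$; worth saying explicitly.)

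There is one incorrect justification, in the sub-case $I=\emptyset$. You write that if $z_r$ were the unique maximal element then ``$b^*(z_r)=1=n-a_r+1$ would again force $a_r=n$.'' But the equality $b^*(z_r)=n-a_r+1$ is precisely the tightness condition $r\in I$, which is exactly what fails when $I=\emptyset$: there you only know $b^*(z_r)\le n-a_r$, and $b^*(z_r)=1$ yields $a_r\le n-1$, which is no contradiction. The sub-case is still fine, but for a different reason: if $z_r$ were the unique maximal element of $P$, then every element of $X$ would lie below it, so $b(z_r)=n>a_r$, violating the \emph{first} condition $b(z_r)\le a_r$. (Alternatively, if $r=0$ any maximal element serves and there is nothing to avoid.) With that one citation corrected, the induction closes and the proof is complete.
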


\smallskip

We apply this result to determine the vanishing conditions for \. $\aF(k,\ell)$.

\smallskip

\begin{thm}\label{thm:vanishing}
Let \. $P=(X,\prec)$ \. be a poset with \. $|X|=n$ \. elements, and let \. $z_1\prec z_2\prec z_3$ \.
be distinct elements in~$X$.  Then  \. $\aFr(k,\ell) >0$ \, {\underline{\rm  if and only if}}
\begin{eqnarray*}
 b(z_1,z_2)-1\ \leq & k &\leq \ n+1 - b(z_1) - b^*(z_2), \\
b(z_2,z_3)-1  \ \leq & \ell &\leq \ n+1 - b^*(z_3) - b(z_2), \\
 b(z_1,z_3)-1\ \leq & k + \ell &\leq \ n+1 - b^*(z_3)-b(z_1).
\end{eqnarray*}
%\com{SH}{The max for the left side of inequality above is redundant, because we have:
%\[ b(z_2)+b^*(z_1) - n-1 \ \leq \  b(z_1,z_2)-1. \]
%Because this change is rather big, I do not make the change myself, and need someone else to agree before making the change.}
\end{thm}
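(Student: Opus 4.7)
The plan is to apply Theorem~\ref{thm:stanley_vanish} directly, with $r=3$, to the chain $z_1 \prec z_2 \prec z_3$, parametrized by $a := L(z_1)$, so that $a_1 = a$, $a_2 = a+k$, $a_3 = a+k+\ell$. Then $\aF(k,\ell)>0$ if and only if there exists an integer $a$ satisfying all the conditions in \eqref{eq:Sta-gen-vanish}. Writing these out, the constraints on $a$ form a system consisting of three lower bounds
\[
a \.\ge\. b(z_1), \quad a \.\ge\. b(z_2)-k, \quad a \.\ge\. b(z_3)-k-\ell,
\]
three upper bounds
\[
a \.\le\. n+1-b^*(z_1), \quad a \.\le\. n+1-k-b^*(z_2), \quad a \.\le\. n+1-k-\ell-b^*(z_3),
\]
together with the three ``chain'' constraints $k \ge b(z_1,z_2)-1$, $\ell \ge b(z_2,z_3)-1$, $k+\ell \ge b(z_1,z_3)-1$, which are precisely the three lower bounds in the statement.

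The remaining task is to show that such an $a$ exists if and only if the three upper bounds of the theorem hold (given the three chain lower bounds). Since existence of an integer $a$ is equivalent to $\max(\text{lower bounds}) \le \min(\text{upper bounds})$, I would check each of the nine pairwise inequalities:
\begin{itemize}
\item The three ``diagonal'' pairs (matching $L_i$ with $U_i$) reduce to $b(z_i)+b^*(z_i) \le n+1$, which holds since $B(z_i) \cap B^*(z_i) = \{z_i\}$.
\item The three pairs matching a lower bound for $z_i$ with an upper bound for $z_j$, $i<j$, give exactly the three upper-bound conditions of the theorem on $k$, $\ell$, and $k+\ell$.
\item The three pairs with $i > j$ reduce, after cancellation, to $k \ge b(z_2)+b^*(z_1)-n-1$, $\ell \ge b(z_3)+b^*(z_2)-n-1$, and $k+\ell \ge b(z_3)+b^*(z_1)-n-1$. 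These follow from the chain lower bounds combined with the elementary identity $b(z_i)+b^*(z_j) - b(z_i,z_j) \le n$, which holds because $B(z_j) \cup B^*(z_i) \subseteq X$ and $B(z_j) \cap B^*(z_i) = B(z_i,z_j)$ for $z_i \prec z_j$.
\end{itemize}

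The main (minor) obstacle is to verify the three ``cross'' inequalities in the last bullet cleanly, in particular the identity $B(z_j) \cap B^*(z_i) = B(z_i,z_j)$ for $z_i \prec z_j$, which follows directly from the definitions of the lower and upper order ideals and the interval. Once these are in place, combining the three chain inequalities with the three upper bounds in the theorem yields all nine required inequalities; conversely, each of the six stated conditions is individually forced by one of the inequalities in \eqref{eq:Sta-gen-vanish}, giving necessity. This completes the equivalence.
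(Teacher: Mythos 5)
Your proposal is correct and follows essentially the same route as the paper: apply Theorem~\ref{thm:stanley_vanish} to the chain $z_1\prec z_2\prec z_3$ with $a_1=a$, $a_2=a+k$, $a_3=a+k+\ell$, and check consistency of the resulting bounds on $a$, using $b(z_i)+b^*(z_i)\le n+1$ for the diagonal pairs and the inclusion--exclusion bound $b(z_j)+b^*(z_i)-b(z_i,z_j)\le n$ for the cross pairs. The only blemish is the index slip in your stated identity (it should read $b(z_j)+b^*(z_i)-b(z_i,z_j)\le n$ for $z_i\prec z_j$), which your set-theoretic justification via $B(z_j)\cap B^*(z_i)=B(z_i,z_j)$ already gets right.
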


Note that conditions in the theorem can be viewed as $6$ linear inequalities for \. $(k,\ell)\in \nn^2$.
These inequalities determine a convex polygon in~$\rr^2$ (see below).

\begin{proof}
We have that $\aF(k,\ell)>0$ if and only if there exists an integer $a$, such that the conditions of Theorem~\ref{thm:stanley_vanish} are satisfied for the elements $z_1 \prec z_2 \prec z_3$ with $a_1 =a, a_2=a+k, a_3=a+k+\ell$. Rewriting the inequalities we obtain the following conditions
\begin{equation*}
\aligned
& b(z_1,z_2) \leq k+1 \.,\quad b(z_2,z_3) \leq \ell+1 \., \quad b(z_1,z_3) \leq k +\ell+1 \qquad \text{ and }\\
& \max\{b(z_1), b(z_2)-k,b(z_3)-k-\ell\} \leq a \leq n+1- \max\{b^*(z_1), k+b^*(z_2), k+\ell+b^*(z_3)\}
\endaligned
\end{equation*}
The integer $a$ exists if and only if the last inequalities are consistent, which leads to
\begin{equation*}
\aligned
& b(z_1,z_2)+1 \leq k \.,\quad b(z_2,z_3)+1 \leq \ell \., \quad b(z_1,z_3)+1 \leq k +\ell \qquad \text{ and }\\
& \max\{b(z_1), b(z_2)-k,b(z_3)-k-\ell\} + \max\{b^*(z_1), k+b^*(z_2), k+\ell+b^*(z_3)\}  \leq n+1
\endaligned
\end{equation*}
Noting that $b(z_i) +b^*(z_i) \leq n+1$ for all $i$, the second inequality translates to $6$ unconditional linear inequalities for $k$ and $\ell$, which can be written as
\begin{eqnarray*}
b(z_2)+b^*(z_1) - n-1 \ \leq & k &\leq \ n+1 - b(z_1) - b^*(z_2), \\
b^*(z_2)+b(z_3) -n-1  \ \leq & \ell &\leq \ n+1 - b^*(z_3) - b(z_2), \\
b^*(z_1)+b(z_3) -n-1 \ \leq & k + \ell &\leq \ n+1 - b^*(z_3)-b(z_1).
\end{eqnarray*}
Finally, since \. $|X|=n$, we also have:
\[ b(z_i)+b^*(z_j) - n \ \leq \  b(z_j,z_i)  \quad \text{for all \ $1\leq j<i \leq 3$\ts.}
\]
Combining with the previous inequalities, we obtain the desired conditions.
\end{proof}

\smallskip

\begin{cor}\label{c:vanish-either}
Suppose that \. $\aFr(k+1,\ell) \. \aFr(k,\ell+1)=0$. Then \. $\aFr(k,\ell)\. \aFr(k+1,\ell+1)=0$.
\end{cor}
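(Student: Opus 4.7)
The plan is to derive the corollary directly from the vanishing characterization in Theorem~\ref{thm:vanishing}. Recall that that theorem encodes positivity of $\aF(k,\ell)$ as a conjunction of six linear inequalities on $(k,\ell)$, each of the form $L k + M \ell \ge C$ or $L k + M \ell \le C$ with $(L,M) \in \{(1,0),(0,1),(1,1)\}$. Thus the set \, $\mathcal{R} := \{(k,\ell) \in \Zb_{\ge 1}^2 : \aF(k,\ell) > 0\}$ \, consists of the integer points in a convex polygon in $\Rb^2$.

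I will prove the contrapositive: assume $\aF(k,\ell) \. \aF(k+1,\ell+1) > 0$, and show that both $\aF(k+1,\ell) > 0$ and $\aF(k,\ell+1) > 0$. By Theorem~\ref{thm:vanishing}, the two hypotheses give the following twelve linear inequalities, namely the six inequalities evaluated at $(k,\ell)$ and the six at $(k+1,\ell+1)$.

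To conclude, I just need to check that each of the six linear inequalities of Theorem~\ref{thm:vanishing} remains valid when evaluated at $(k+1,\ell)$ and at $(k,\ell+1)$. This is immediate because for any linear form $L k + M \ell$ with $(L,M) \in \{(1,0),(0,1),(1,1)\}$ and $L,M \ge 0$, the values at $(k+1,\ell)$ and $(k,\ell+1)$ lie (weakly) between the values at $(k,\ell)$ and at $(k+1,\ell+1)$. So any lower-bound inequality satisfied at $(k,\ell)$ and any upper-bound inequality satisfied at $(k+1,\ell+1)$ propagate to both intermediate points. For example, the upper bound $k + \ell \le n+1 - b^*(z_3) - b(z_1)$ holds at $(k+1,\ell+1)$ by assumption, so it also holds at $(k+1,\ell)$ and $(k,\ell+1)$ whose coordinate sums are smaller; the remaining five inequalities are handled identically.

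I do not foresee any real obstacle: the statement is a routine convexity observation about the integer polygon described by Theorem~\ref{thm:vanishing}, once one notes that each of its six defining inequalities is monotone in $k$, in $\ell$, or in $k+\ell$. The only mild subtlety is to record which inequality is inherited from which corner of the unit square, but no additional combinatorial input is needed.
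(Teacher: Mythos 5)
Your proof is correct and is essentially the paper's own argument: the paper likewise invokes Theorem~\ref{thm:vanishing} to describe the support of $\aF$ as a (possibly degenerate) hexagon with sides parallel to the axes and to the line $k+\ell=\const$, and then observes that $(k,\ell),(k+1,\ell+1)$ lying in this region forces $(k+1,\ell),(k,\ell+1)$ to lie in it as well. You have merely written out explicitly the monotonicity check that the paper compresses into a single ``observe'' sentence.
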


\smallskip

\begin{proof}
Let \. $\cS:= \big\{(k,\ell)\in \nn^2 \. : \. \aF(k,\ell)>0\big\}$ \. denote the support
of \. $\aF(\cdot,\cdot)$.  By Theorem~\ref{thm:vanishing} we have \. $\cS$ \. is a (possibly degenerate)
hexagon with sides parallel to the axis and the line \. $k+\ell=0$. Observe that if \. $(k,\ell), \ts (k+1,\ell+1)\in \cS$,
then we also have \. $(k+1,\ell), \ts (k,\ell+1)\in \cS$.  In other words, if \. $\aF(k,\ell)\. \aF(k+1,\ell+1)=0$,
then we also have \. $\aF(k+1,\ell) \. \aF(k,\ell+1)=0$, as desired.
\end{proof}

\medskip

\subsection{Cross product equality in the vanishing case}\label{ss:vanish-equality}
We are now ready to prove \eqref{eq:main-thm-3} in the main theorem.

\smallskip

\begin{lemma}\label{lem:strict-CPC-zero}
Let $P=(X,\prec)$ be a finite poset, and let \ts $z_1 \prec z_2 \prec z_3$ \ts
be three distinct elements in~$X$.
Suppose that \. $\aFr(k,\ell+2)=\aFr(k+2,\ell)=0$ \. and \. $\aFr(k,\ell) \. \aFr(k+1,\ell+1)> 0$.
Then
		\[ \aFr(k,\ell+1) \. \aFr(k+1,\ell) \ = \  \aFr(k+1,\ell+1) \. \aFr(k,\ell).  \]
\end{lemma}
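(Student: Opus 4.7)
The plan is to show that the twin vanishing hypotheses force $z_2$ to be comparable to every element of $X$; once this rigidity is established, $\aFr(k,\ell)$ factorizes into independent counts and the claimed identity becomes a triviality.

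\emph{Locating the support corner.} Set $\kappa := n+1-b(z_1)-b^*(z_2)$ and $\lambda := n+1-b(z_2)-b^*(z_3)$. By Theorem~\ref{thm:vanishing}, $\aFr(k+1,\ell+1)>0$ forces $k+1\le\kappa$ and $\ell+1\le\lambda$, while $\aFr(k,\ell)>0$ secures the lower bounds. Compare the six inequalities of Theorem~\ref{thm:vanishing} for the point $(k+2,\ell)$ against those for $(k+1,\ell+1)$ and $(k,\ell)$: every lower bound is either weaker or inherited, the upper bound on $\ell$ is inherited, and the upper bound on $k+\ell$ gives the same value $k+\ell+2$ as for $(k+1,\ell+1)$. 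The only constraint that can fail is $k+2\le\kappa$, so $\aFr(k+2,\ell)=0$ forces $k+1=\kappa$. By the symmetric argument, $\aFr(k,\ell+2)=0$ forces $\ell+1=\lambda$.

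\emph{Rigidity of $z_2$.} Pick any $L\in\cF(k+1,\ell+1)$, nonempty by hypothesis. The standard bounds $L(z_1)\ge b(z_1)$ and $L(z_2)\le n+1-b^*(z_2)$ combine with $L(z_2)-L(z_1)=\kappa$ to force equality in both, so $L(z_2)=n+1-b^*(z_2)$. The analogous argument on the pair $(z_2,z_3)$ forces $L(z_2)=b(z_2)$. Hence $b(z_2)+b^*(z_2)=n+1$, which means $z_2$ is comparable to every element of $X$.

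\emph{Factorization and conclusion.} Since $X=B(z_2)\cup B^*(z_2)$ with these subposets meeting only in $\{z_2\}$, every linear extension $M\in\Ec(P)$ satisfies $M(z_2)=b(z_2)$ and restricts bijectively to a pair in $\Ec(P|_{B(z_2)})\times\Ec(P|_{B^*(z_2)})$. Setting
\[
G(a) \, := \, \big|\{M_1\in\Ec(P|_{B(z_2)}):M_1(z_2)-M_1(z_1)=a\}\big|, \quad H(b) \, := \, \big|\{M_2\in\Ec(P|_{B^*(z_2)}):M_2(z_3)-M_2(z_2)=b\}\big|,
\]
we obtain $\aFr(a,b)=G(a)\,H(b)$ for all $a,b$, which makes
\[
\aFr(k+1,\ell)\,\aFr(k,\ell+1) \ = \ G(k+1)\,G(k)\,H(\ell)\,H(\ell+1) \ = \ \aFr(k+1,\ell+1)\,\aFr(k,\ell)
\]
a tautology. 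The only real content of the argument is the rigidity step identifying $z_2$ as comparable to all of $X$; once that is in hand, the factorization and the identity are routine bookkeeping.
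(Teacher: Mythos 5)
Your proof is correct and follows essentially the same route as the paper: use Theorem~\ref{thm:vanishing} to pin $(k,\ell)$ to the corner of the support, deduce that $z_2$ is comparable to every element, and then factor $\aFr(a,b)$ as a product over the lower and upper order ideals of $z_2$. The only (cosmetic) difference is in the rigidity step, where you extract $b(z_2)+b^*(z_2)=n+1$ directly from a linear extension in $\cF(k+1,\ell+1)$ rather than from the $k+\ell$ constraint of Theorem~\ref{thm:vanishing}; both arguments are valid.
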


\begin{proof}
As in the proof of Corollary~\ref{c:vanish-either}, let \. $\cS:= \big\{(k,\ell)\in \nn^2 \. : \. \aF(k,\ell)>0\big\}$ \. denote the support
of \. $\aF(\cdot,\cdot)$.   By the assumption, we have \. $(k,\ell+2), \ts (k+2,\ell)\not \in \cS$ \. and \.
$(k,\ell), (k+1,\ell+1)\in \cS$.  Theorem~\ref{thm:vanishing} then gives:
$$k +1  \. \leq \.   n+1-b(z_1)-b^*(z_2) \quad \text{and} \quad k +2  \. > \.   n+1-b(z_1)-b^*(z_2),
$$
$$\ell+1  \. \leq \.  n+1 -b(z_2) -b^*(z_3) \quad \text{and} \quad \ell + 2 \. > \.   n+1 -b(z_2) -b^*(z_3).
$$
Together these imply
$$(\ast) \qquad k \. = \.  n - b(z_1) - b^*(z_2) \quad \text{and} \quad \ell \. = \. n-b(z_2) -b^*(z_3). \qquad \quad \ $$
Theorem~\ref{thm:vanishing} also gives
$$k+\ell +2 \, \leq \, n+1 -b^*(z_3) - b(z_1).$$
%\com{SH}{There is another case, which is that }
Substituting \ts $(\ast)$ \ts into this inequality, we get:
$$n - b(z_1) - b^*(z_2) + n-b(z_2) -b^*(z_3) \, \leq \, n- 1 -b^*(z_3) - b(z_1).
$$
This simplifies to \. $n+1 \leq b(z_2) +b^*(z_2)$ \. and  implies that all elements in $X$ are comparable to~$z_2$.
	
Let \. $S= B(z_2)-z_2$ \. and \. $T =B^*(z_2)-z_2$ \. be the lower set and upper sets of $z_2$, respectively.
Denote \. $s:=|S| = b(z_2)-1$ \. and \. $t:=|T|=b^*(z_2)-1$\ts.
Note that \. $X=S \sqcup T \sqcup \{z_2\}$ \. by the argument above.

Let \. $1\le r \le n$. Consider a subposet \. $(S,\prec)$ \. of \. $P=(X,\prec)$ \. and
denote by \. $\aN_{r}$ \. the number of linear extensions \ts $L$ \ts of
\. $(S,\prec)$ \. such that \. $L(z_1)=r$.    Similarly, consider a subposet \. $(T,\prec)$ \. of \. $P=(X,\prec)$ \. and
denote by \. $\aN_{r}'$ \. the number of linear extensions \ts $L$ \ts of
\. $(S,\prec)$ \. such that \. $L(z_3)=r$.

Since \. $z_1 \prec z_2\prec z_3$, we have \. $z_1 \in S$ \. and \. $z_3 \in T$.  Therefore, for all \.
$p,q\ge 1$ \. we have:
	\[  \aF(p,q) \ = \  \aN_{s-p+1} \, \aN_{q}'\.. \]
	This implies that
	\begin{align*}
		& \aF(k,\ell+1) \. \aF(k+1,\ell) \ = \ \aN_{s-k+1} \, \aN_{\ell+1}' \, \aN_{s-k} \, \aN_{\ell}' \\
& \qquad = \ \aN_{s-k} \, \aN_{\ell+1}' \, \aN_{s-k+1} \, \aN_{\ell}' \ = \
		\aF(k+1,\ell+1) \. \aF(k,\ell),
	\end{align*}
	as desired.
	%	We will show that every element of $X$ is contained in \. $S \cup T \cup \{z_2\}$, i.e. every element of the poset is comparable to $z_2$.
\end{proof}

\smallskip

\begin{ex} \label{ex:vanish-unequal}
For \. $k, \ell\ge 1$, let \. $X:=\{x_1,\ldots,x_{k+\ell-1},z_1,z_2,z_3\}$.  Consider a poset \.
$P=(X,\prec)$, where \. $A:=\{x_1,\ldots,x_{k+\ell-1},z_2\}$ \. is an antichain,
and \. $z_1\prec A \prec z_3$.  Observe that
$$
\aF(k,\ell)= \aF(k+1,\ell+1)=\aF(k,\ell+2)=\aF(k+2,\ell)=0,$$
$$
\aF(k,\ell+1) \. = \. \tbinom{k+\ell-1}{k-1} \quad \text{and} \quad
 \aF(k+1,\ell) \. = \. \tbinom{k+\ell-1}{k}\ts.
$$
Then we have:
		\[ \aF(k,\ell+1) \. \aF(k+1,\ell) \, = \, \tbinom{k+\ell-1}{k-1}\tbinom{k+\ell-1}{k} \, > \,  \aF(k+1,\ell+1) \. \aF(k,\ell) \, = \, 0.  \]
This shows that the nonvanishing assumption \.
$\aF(k,\ell) \. \aF(k+1,\ell+1)> 0$ \. in Lemma~\ref{lem:strict-CPC-zero}  cannot be dropped.
\end{ex}

\bigskip

\section{Cross product inequalities in the nonvanishing case } \label{s:words}

\subsection{Algebraic setup} \label{ss:words-setup}
We employ the algebraic framework from \cite[$\S$6]{CPP2}.
With every linear extension \ts $L\in \Ec(P)$ \ts we associate a  word \. $\xx_L \ts=\ts x_1\ldots x_n\ts \in X^\ast$,
such that \. $L(x_i) =i$ \. for all \. $1\le i \le n$.  In the notation of the previous section, this says
that \. $X=\{x_1,\ldots,x_n\}$ \. is a \emph{natural labeling} \ts corresponding to~$L$.
%
%Define \. $\partial(x_f) := x_{f\partial}$.

We can now define the following action of the group \ts $\RG_n$ \ts on \ts $\Ec(P)$ \ts as the right
action on the words \ts $\xx_L$, \ts $L\in \Ec(P)$.  For \. $\xx_L \ts =\ts x_1\ldots \ts x_n$ \. as above, let
\begin{equation}\label{eq:tau-def}
(x_1\ldots \ts x_n) \. \tau_i \ := \ \begin{cases} \ x_1 \ldots \ts x_n, & \ \text{if \, $x_i \prec x_{i+1}$}\ts,\\
\ x_1\dots x_{i+1} \ts x_i \dots x_n\ts, & \ \text{if \, $x_i \parallel x_{i+1}$}\ts.
\end{cases}
\end{equation}

\medskip

\subsection{Single element ratio bounds} \label{ss:words-single}
Let \ts $P=(X,\prec)$ \ts  be a poset with \ts $|X|=n$ elements,
and fix an element \ts $a\in X$ \ts of the poset.
Let \ts $\Nc_k$ \ts be the set of linear extensions \ts $L\in \Ec(P)$ \ts
such that \ts $L(a)=k$, and let \ts $\aN_k :=|\Nc_k|$.

\smallskip

\begin{lemma}\label{lem:Sta-up1}
%	Let \ts $k \in \nn$ \ts be such that \ts $\aNr_{k-1}, \aNr_{k+1} >0$\ts.
%	 Then
We have:
\begin{alignat*}{2}
		 \frac{\aNr_{k}}{\aNr_{k-1}} \  &\leq \ t(a) \quad &&\text{if} \quad \aNr_{k-1}>0, \quad \text{and}\\
		  \frac{\aNr_{k}}{\aNr_{k+1}} \  &\leq \  t^*(a)  \quad &&\text{if} \quad \aNr_{k+1}>0.
\end{alignat*}
\end{lemma}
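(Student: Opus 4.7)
The plan is to establish the first inequality by constructing a map $\phi : \Nc_k \to \Nc_{k-1}$ whose fibers have cardinality at most $t(a)$; the second will then follow by applying the first to the dual poset $P^\ast$, under which $\aNr_k(P)$ and $t^\ast(a)$ get identified with $\aNr_{n-k+1}(P^\ast)$ and $t(a)$, respectively.

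Given $L \in \Nc_k$ with word $\xx_L = x_1 \cdots x_n$ (so $x_k = a$), let $j < k$ be the largest index with $x_j \. \| \. a$. The assumption $\aNr_{k-1} > 0$ forces $b(a) \leq k-1$, which guarantees that such an index exists. We set
\[
\phi(L) \ := \ L \, \tau_j \, \tau_{j+1} \, \cdots \, \tau_{k-1}
\]
using the right action from \eqref{eq:tau-def}. When $j = k-1$ this is simply the adjacent swap of $x_{k-1}$ and $a$; when $j < k-1$, maximality of $j$ forces $x_{j+1}, \ldots, x_{k-1}$ to all be $\prec a$, and the product cyclically rearranges the block $(x_j, x_{j+1}, \ldots, x_{k-1}, a)$ into $(x_{j+1}, \ldots, x_{k-1}, a, x_j)$. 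The first point of substance is checking that each $\tau_i$ in the product genuinely swaps rather than acting as the identity: at the $i$-th step the two adjacent elements are $x_j$ and $x_{j+i}$, which are incomparable because $x_{j+i} \prec a$ together with $x_j \. \| \. a$ rules out $x_j \prec x_{j+i}$ via transitivity, while the linear extension $L$ rules out $x_j \succ x_{j+i}$. Therefore $\phi(L) \in \Nc_{k-1}$, with $a$ at position $k-1$ and $x_j$ at position $k$.

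To bound the fibers, fix $L' \in \Nc_{k-1}$ and let $y' := (L')^{-1}(k)$. Any preimage places $y'$ at some position $j$ of $L$ with $x_j \. \| \. a$, so necessarily $y' \. \| \. a$. Parameterizing preimages by $j \in \{1, \ldots, k-1\}$, the inverse construction produces a valid $L \in \Nc_k$ precisely when the elements of $L'$ at positions $j, j+1, \ldots, k-2$ all lie in $\{z \prec a : z \. \| \. y'\}$: the condition $z \prec a$ guarantees that $j$ remains the maximal incomparable index in $L$, and $z \. \| \. y'$ is needed so that relocating $y'$ to position $j$ does not introduce any order violation (the direction $z \succ y'$ is ruled out automatically, as $z \succ y'$ combined with $z \prec a$ would give $y' \prec a$, contradicting $y' \. \| \. a$). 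Since $|\{z \prec a : z \. \| \. y'\}| = u(a, y') - 1$ and the $k-1-j$ intervening positions must be filled by distinct elements of this set, there are at most $u(a, y') \leq t(a)$ valid choices of $j$. Summing over $L'$ yields $\aNr_k \leq t(a) \ts \aNr_{k-1}$.

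The principal obstacle is this coordinated setup: arranging the cyclic move so that (i) every $\tau_i$ is a genuine swap, (ii) the output remains a linear extension, and (iii) the preimages of each $L'$ are controlled by the local ``incomparability radius'' $u(a, y')$ rather than the coarser $b(a)$. All three rely on the same interplay between the downward chain $x_{j+1}, \ldots, x_{k-1} \prec a$ and the parallelism $y' \. \| \. a$ that renders the set $\{z \prec a : z \. \| \. y'\}$ the tight parameter.
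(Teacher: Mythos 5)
Your proof is correct and follows essentially the same route as the paper's: the same maximal index $j<k$ with $x_j\,\|\,a$, the same chain of adjacent transpositions $\tau_j\cdots\tau_{k-1}$ moving $a$ to position $k-1$, and the same count via $\cL(a,y')$ bounding the displacement by $u(a,y')\le t(a)$, with the second inequality obtained by poset duality. The only cosmetic difference is that you bound the fibers of a map $\Nc_k\to\Nc_{k-1}$ rather than packaging the displacement as an explicit injection into $\Nc_{k-1}\times\{1,\dots,t(a)\}$, which is an equivalent bookkeeping of the same argument.
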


The idea and basic setup of the proof will be used throughout.

\begin{proof}
Consider the first inequality.
The main idea is to construct an explicit injection \. $\phi: \Nc_k \to \Nc_{k-1} \times I$, where $I:= \{1,\ldots,t(a)\}$. This will show that $\aN_k = | \Nc_k | \leq | \Nc_{k-1} \times I | =\aN_{k-1} t(a)$.

We identify a linear extension $L$ where $L(a)=k$ with a word \. $\xx \in \Nc_k$ \. where \. $x_k =a$.  Let $x_i$ be the last element in $\xx$ appearing before $a$ which is incomparable to $a$, that is set $i:=\max \ts \{\ts i \. : \. i <k, \. x_i \not\prec x_k \ts\}$. Such element exists because $\aN_{k-1} >0$ implies that \ts $b(a)\leq k-1$ and so among $x_1,\ldots,x_{k-1}$ there is at least one $x_i \not \prec a$. Moreover,
since $i$ is maximal, we must have $x_j \prec  x_k$ \ts for $j \in [i+1,k]$. Also, for $j\in [i+1,k]$ we must have $x_j \ts\|\ts{} x_i$, as otherwise we would have $x_i \prec x_j \prec x_k=a$.
 Thus, we have
\. $x_j \in \cL(a,x_i)$ \. for \. $i <j<k$ and so  \ts $1\leq k-i \leq t(a)$.

We now define \. $\phi(\xx) := (\xx \tau_i \cdots \tau_{k-1} , k-i)$. Since $x_i \ts\|\ts{} x_j$ for $j\in [i+1,\ldots,k]$ we have that $x_i$ is transposed consecutively with $x_{i+1},\ldots,x_k$, so $\xx \tau_i \cdots \tau_{k-1} = x_1\ldots x_{i-1}x_{i+1}\ldots x_k x_i x_{k+1}\ldots \in \Nc_{k-1}$. We record the original position of $x_i$ via $k-i$.

To see this is an injection we construct \ts $\phi^{-1}$, if it exists. Namely, $\phi^{-1}(\xx',r)$
moves the element $x'_k$ after \ts $x'_{k-1}=a$ \ts forward by \ts $r=(k-i)$ \ts positions as long as $x'_k \ts\|\ts{}  x_j$ for $j \in [k-r,k-1]$.
This completes the proof of the first inequality.
The second inequality follows by applying the same argument to the dual poset~$P^*$.
\end{proof}

\smallskip

\begin{cor}\label{cor:Sta-upb}
We have:
\begin{alignat*}{2}
	\frac{\aNr_{k}}{\aNr_{k-1}} \  &\leq \ k-1 \quad &&\text{if} \quad \aNr_{k-1}>0, \quad \text{and}\\
	\frac{\aNr_{k}}{\aNr_{k+1}} \  &\leq \  n-k  \quad &&\text{if} \quad \aNr_{k+1}>0.
\end{alignat*}
\end{cor}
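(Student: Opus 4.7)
The plan is to derive both inequalities as immediate consequences of Lemma~\ref{lem:Sta-up1}, by bounding the parameters \ts $t(a)$ \ts and \ts $t^*(a)$ \ts in terms of \ts $k$ \ts and \ts $n$ \ts under the stated nonvanishing hypotheses. The key observation is that the first inequality in Lemma~\ref{lem:Sta-up1} already gives \. $\aN_k/\aN_{k-1} \leq t(a)$ \. whenever \. $\aN_{k-1}>0$, so it suffices to show \. $t(a) \le k-1$ \. under this hypothesis; the second inequality is handled symmetrically.

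For the upper bound on \ts $t(a)$, I would invoke the inequality \. $t(a)\leq b(a)$ \. noted right after the definition in $\S\ref{ss:vanish-par}$ (which is immediate since \. $\cL(a,y)\subseteq B(a)$). Then use the fact that \. $\aN_{k-1}>0$ \. forces \. $b(a)\le k-1$: any linear extension \ts $L$ \ts with \. $L(a)=k-1$ \. must place every element of \ts $B(a)$ \ts at a position \. $\leq k-1$, so \. $b(a)\leq k-1$. Chaining these gives \. $t(a)\leq b(a)\leq k-1$, and Lemma~\ref{lem:Sta-up1} concludes. For the second inequality, the same argument applied to the dual poset \ts $P^*$ \ts (equivalently, using \. $t^*(a)\leq b^*(a)$ \. together with the observation that \. $\aN_{k+1}>0$ \. forces \. $b^*(a)\leq n-k$, since every element of \ts $B^*(a)$ \ts must sit at a position \. $\geq k+1$) yields \. $\aN_k/\aN_{k+1}\leq t^*(a)\leq n-k$.

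There is no real obstacle here: the corollary is essentially a packaging of Lemma~\ref{lem:Sta-up1} with the trivial bounds on the poset parameters. The only minor point worth checking is that the degenerate case in the definition of \ts $t(a)$ \ts (where every element of \ts $X$ \ts is comparable to \ts $a$, giving \. $t(a):=1$) is consistent with the stated bound: indeed, \. $\aN_{k-1}>0$ \. forces \. $k\geq 2$, hence \. $k-1\geq 1 = t(a)$, and analogously \. $\aN_{k+1}>0$ \. forces \. $k\leq n-1$, so \. $n-k\geq 1 = t^*(a)$.
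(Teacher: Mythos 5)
Your proposal is correct and follows the paper's own argument: the paper likewise derives the corollary from Lemma~\ref{lem:Sta-up1} by observing that \. $\aNr_{k-1}>0$ \. forces \. $t(a)\le b(a)\le k-1$ \. and dually \. $\aNr_{k+1}>0$ \. forces \. $t^*(a)\le b^*(a)\le n-k$. Your extra check of the degenerate convention \. $t(a):=1$ \. is a harmless (and sensible) addition.
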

\smallskip

Note that the inequalities in the corollary are tight, see Proposition~\ref{prop:Stanley-up}.

\smallskip

\begin{proof}
	Observe that  \. $t(a) \leq k-1$ \. since there are  at most \. $(k-1)$ \. elements less than or equal to~$a$ by the assumption that $\aN_{k-1}>0$.
	Similarly, observe that  \. $t^*(a) \leq n-k$ \. since there are  at most \. $(n-k)$ \.
elements greater than or equal to~$a$ by the assumption that $\aN_{k+1}>0$.  These imply the result.
\end{proof}

\medskip

\subsection{Double element ratio bounds} \label{ss:words-double}
We now give bounds for nonzero ratios of \ts $\aF(k,\ell)$.
For the degenerate case, see Section~\ref{s:vanish}.

\smallskip

\begin{lemma}\label{l:Fkell-bound1}
%	\com{SH}{I think the previous inequalities have typos so I change them, I suggest double checking with v19 just to be sure. In any case, the change is local and does not affect any other lemma. }
Suppose that \. $\aFr(k,\ell+2)>0$. Then we have:
\begin{align*}
\frac{ \aFr(k+1,\ell+1) }{\aFr(k,\ell+2)} \, \leq \, \min\{ t(z_2), k \} \, + \, \min\big\{ b(z_1,z_2)-2, t^*(z_1)\big\} \. \cdot \.
\big( t^*(z_3) +  t(z_2)\big).
\end{align*}
Similarly, suppose that \. $\aFr(k+2,\ell)>0$. Then we have:
\begin{align*}
\frac{ \aFr(k+1,\ell+1) }{\aFr(k+2,\ell)} \, \leq \, \min\big\{ t^*(z_2), \ell \big\} \, + \, \min\big\{ b(z_2,z_3)-2,t(z_3)\big\} \. \cdot \.
\big( t(z_1) +  t^*(z_2)\big).
\end{align*}
\end{lemma}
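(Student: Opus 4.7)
The plan is to prove the first inequality by an explicit combinatorial injection, and to deduce the second by applying the first inequality to the dual poset $P^\ast$ with $(z_1^\ast, z_2^\ast, z_3^\ast) := (z_3, z_2, z_1)$. Under this duality one has $\aF_{P^\ast, z_1^\ast, z_2^\ast, z_3^\ast}(\tilde k, \tilde \ell) = \aF_{P, z_1, z_2, z_3}(\tilde \ell, \tilde k)$ (via the reversal of linear extensions), $t_{P^\ast}(\cdot) = t^*_P(\cdot)$, and $b_{P^\ast}(z_i^\ast, z_j^\ast) = b_P(z_j, z_i)$; substituting $\tilde k = \ell$ and $\tilde \ell = k$ in the first bound yields the second.

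For the first bound, I will construct an injection
\[
\phi \. : \. \cF(k+1, \ell+1) \ \hookrightarrow \ \cF(k, \ell+2) \. \times \. I,
\]
with $|I|$ at most the claimed quantity. Identify each $L \in \cF(k+1, \ell+1)$ with the word $x_1 \cdots x_n$ where $L(x_i) = i$, and set $a := L(z_1)$, so that $L(z_2) = a+k+1$ and $L(z_3) = a+k+\ell+2$. The construction splits according to whether some position in $[a+1, a+k]$ holds an element incomparable to $z_2$ in~$P$.

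In Case~1, some $i \in [a+1, a+k]$ satisfies $x_i \| z_2$; choose the largest such~$i$. Its maximality forces $x_j \prec z_2$ for $j \in [i+1, a+k]$, which combined with $L$ being a linear extension gives $x_i \| x_j$ for these $j$'s, just as in the proof of Lemma~\ref{lem:Sta-up1}. The adjacent-transposition sequence $\tau_i \tau_{i+1} \cdots \tau_{a+k}$ is therefore valid on $L$ and carries $x_i$ to position $a+k+1$ while moving $z_2$ to $a+k$, producing $L' \in \cF(k, \ell+2)$. Setting $r := a+k+1-i$, one has $r \le k$ and $\{z_2, x_{i+1}, \ldots, x_{a+k}\} \subseteq \cL(z_2, x_i)$ of cardinality $r$, whence $r \le t(z_2)$; so Case~1 contributes at most $\min\{t(z_2), k\}$ indices. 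In Case~2, $x_j \prec z_2$ for every $j \in [a+1, a+k]$, so the simple swap is unavailable. I design a compound injection by first choosing an element $w$ from a set of size at most $\min\{b(z_1, z_2)-2,\, t^*(z_1)\}$---namely $w \in B(z_1, z_2) \setminus \{z_1, z_2\}$, which is also contained in an appropriate $\cL^*(z_1, y^*)$ witnessing $t^*(z_1)$---and then selecting a ``mode'' indicating whether $w$ is transported rightward past $z_3$ (at most $t^*(z_3)$ destinations, by a maximality argument on $z_3$ mirroring Case~1) or past $z_2$ via an auxiliary element incomparable to $z_2$ (at most $t(z_2)$ destinations). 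The resulting $L' \in \cF(k, \ell+2)$, together with $(w, \text{mode})$, determines $L$ uniquely.

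The principal obstacle is the Case~2 construction. One must specify the precise sequence of adjacent transpositions, verify at each step that only incomparable elements are swapped, confirm that the endpoint lies in $\cF(k, \ell+2)$, and show that the Case~2 image is disjoint from (or distinguishable by index from) the Case~1 image so that $\phi$ is an injection. This case is the technical heart of the proof.
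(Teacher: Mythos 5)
Your overall architecture matches the paper's: an injection from $\cF(k+1,\ell+1)$ into $\cF(k,\ell+2)\times I$, split according to whether the open block between $z_1$ and $z_2$ contains an element incomparable to $z_2$, with the second inequality deduced by passing to the dual poset. Your Case~1 and the duality reduction are correct and coincide with what the paper does. The problem is Case~2, which you correctly identify as the heart of the argument but for which the blueprint you give would fail, not merely for lack of detail.

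Concretely: you propose to choose $w\in B(z_1,z_2)\setminus\{z_1,z_2\}$ and transport it rightward past $z_3$ or past $z_2$. Such a $w$ satisfies $w\prec z_2\prec z_3$, so it can never be transposed past either of them in any linear extension; and in Case~2 \emph{every} element in positions $[a+1,a+k]$ lies below $z_2$, which is exactly why the Case~1 move is unavailable there. Even for an element incomparable to both $z_2$ and $z_3$, a single rightward transport from the first block to just after $z_3$ lands in $\cF(k,\ell+1)$, not $\cF(k,\ell+2)$ (the second gap does not grow). The paper's Case~2 moves in the opposite direction: it takes the minimal $j\in[a+1,a+k]$ with $x_j\parallel z_1$ (such $j$ exists, since otherwise $b(z_1,z_2)\ge k+2$ would force $\aF(k,\ell+2)=0$) and moves $x_j$ leftward to just before $z_1$; the recorded quantity bounded by $\min\{b(z_1,z_2)-2,\, t^*(z_1)\}$ is the displacement $j-a$ of this move, controlled because $z_1,x_{a+1},\dots,x_{j-1}$ lie in $B(z_1,z_2)$ and in $\cL^*(z_1,x_j)$ --- it is not the count of choices of an element of $B(z_1,z_2)$. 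This first move reaches $\cF(k,\ell+1)$, and a second, independent move then stretches the second gap to $\ell+2$: either pull the first element after $z_3$ incomparable to $z_3$ to just before $z_3$ (factor $t^*(z_3)$), or, if every element after $z_3$ is above $z_3$, push the last element before $z_1$ incomparable to $z_2$ to just after $z_2$ (factor $t(z_2)$). Both auxiliary moves act on elements outside the segment between $z_1$ and $z_2$; none of this structure is recoverable from your sketch, so Case~2 needs to be redone along these lines.
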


\begin{proof}
For the first inequality, we construct an injection \. $\psi: \cF(k+1,\ell+1) \to I \times \cF(k+2,\ell)$, where \. $I=I_1 \sqcup I_2 \sqcup I_3$ \. and \ts $I_i$ \ts are intervals of lengths given by the RHS (see below).  We use notation \. $[p,q]=\{i\in \nn \. : \. p \le i \le q\}$ \. to denote the integer interval.

Let \. $\xx \in \cF(k+1,\ell+1)$ \. be a word, such that \. $x_i=z_1$, \. $x_{i+k+1} =z_2$ \. and \.  $x_{i+k+\ell+2} =z_3$. We consider several cases.

\smallskip

\nin
{\bf Case 1:} \ts Suppose there exists an element \. $x_j \not \prec z_2$ \. for some \. $j \in [i+1,i+k]$. Let $j$ be the maximal such index. Then for every \ts $r \in [j+1,i+k]$ \ts we have that $x_r \in \cL(z_2,x_j)$. Set $\psi(\xx) = (\xx \tau_j \cdots \tau_{i+k}, i+k+1-j)$, i.e.\ \ts $\psi$ \ts moves \ts $x_j$ \ts to the position after \ts $z_2$, so that \ts $z_2$ \ts is now in position~$\ts i+k$. Observe that the inverse of~$\ts \psi$ \ts  exists for all \. $\yy \in \cF(k,\ell+2)$,  since \.  $y_{i+k}=z_2\ts\|\ts{}y_{i+k+1}$. Note that \. $i+k+1-j \leq  \min\{ u(z_2,x_j),  k \}$.  Thus, we can record the value \. $(i+k+1-j)$ \. in the first interval \. $I_1 =[1, \min\{t(z_2),k\}]$.

\smallskip

\nin
{\bf Case 2:} \ts Suppose that we have \. $x_j \prec z_2$ \. for all \. $j\in [i,i+k]$.  Then there exists an element \. $x_j \not \succ z_1$\ts. Indeed, otherwise \. $x_j \in B(z_1,z_2)$ \. for all \. $j \in [i,i+k+1]$, which gives \. $k+2 \leq |B(z_1,z_2)|$ \. and implies \. $\aF(k,\ell+2)=0$ \. contradicting the assumption.  As above, let $j$ be the smallest possible index such that $x_j\ts\|\ts{}z_1$, so we can move $x_j$ in front of $z_1$. Note that $j-i \leq  \min\{ b(z_1,z_2)-2, u^*(z_1,x_j)\}$. We now have a word $\xx' \in \cF(k,\ell+1)$.
We split this case into two subcases.

\smallskip

\nin
{\bf Subcase 2.1:} \ts  Suppose there exists \. $x_r \not \succ z_3$ \. for \. $r > i+k+\ell+2$. Let $r$ be the minimal such index, and move $x_r$ in front of $z_3$, creating a word \. $\xx'' \in \cF(k,\ell+2)$. Note that \. $r-(k+\ell+2+i) \leq  u^*(z_3,x_r)$.  Thus, we can record the value \. $(j-i,r-(k+\ell+2+i))$ \. in the second interval \. $I_2=[1,\min\{ b(z_1,z_2)-2, t^*(z_1)\}t^*(z_3)]$.

\smallskip

\nin
{\bf Subcase 2.2:} \ts Suppose \. $x_s \succ z_3$ \. for all \. $s > i+k+\ell+2$. Then, since \. $\aF(k,\ell+2) \neq 0$, there must be some $x_s \not \prec z_2$, for $s< i+k+1$.  Since we are in Case~2, we have \. $s< i$. Let $s$ be the largest such index. Thus \. $x_{s+1},\ldots,x_{i+k} \prec x_{i+k+1}= z_2$. We can then move \ts $x_s$ \ts past all these entries to right past \ts $z_2$ \ts and obtain a word in \. $\cF(k,\ell+2)$. Note that \. $i-s \leq u(z_2,x_s)$. Thus, we can record the value \.
$(j-i, i-s)$  \. in the third interval \. $I_3=[\min\{ b(z_1,z_2)-2, t^*(z_1)\} t(z_2)]$.

\smallskip

Gathering these cases, and noting that \. $t(x) \geq u(x,y)$ \. and \. $t^*(x) \geq u^*(x,y)$ \. for all $x,y\in X$,
we obtain the desired first inequality.  For the second inequality, we apply the analogous argument to
the dual poset~$P^*$.
\end{proof}

\medskip

\subsection{Bounds on cross product ratios} \label{ss:words-cross}
We can now bound the cross product ratios in the nonvanishing case.

\smallskip

\begin{cor}\label{cor:thin}
Let \ts $P=(X,\prec)$ \ts be either a $t$-thin or $t$-flat poset with respect to \. $\{z_1,z_2,z_3\}$.
Suppose that \. $\aFr(k,\ell+2)>0$. Then we have:
$$\aFr(k+1,\ell+1) \, \leq \, \aFr(k,\ell+2) \. \cdot \. \min\big\{k \ts (2t+1), 2t^2+t\big\}.$$
Similarly, suppose that \. $\aFr(k+2,\ell)>0$.  Then we have:
$$\aFr(k+1,\ell+1)  \, \leq \,  \aFr(k+2,\ell)   \. \cdot \. \min\big\{\ell \ts (2t+1), 2t^2+t\big\}.$$
\end{cor}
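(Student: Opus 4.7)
The plan is to derive the corollary directly from Lemma~\ref{l:Fkell-bound1} by bounding each quantity on its right-hand side in two different ways, and then taking the minimum of the two resulting bounds.

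First, I would verify the key fact that under either hypothesis, $t(z_i) \leq t$ and $t^*(z_i) \leq t$ for each \ts $i \in \{1,2,3\}$. For the $t$-flat case this is immediate: if $u \in A$, then $t(u) \leq b(u) \leq t$ and $t^*(u) \leq b^*(u) \leq t$ by the definitions in $\S$\ref{ss:vanish-par}. For the $t$-thin case, recall that without loss of generality $z_1 \prec z_2 \prec z_3$, so if $y \in X$ is incomparable to some $z_i$ then necessarily $y \notin A$; hence $y$ has at most $t$ elements incomparable to it, which forces $u(z_i,y) \leq t$ and $u^*(z_i,y) \leq t$, giving $t(z_i), t^*(z_i) \leq t$.

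With this in hand, I would invoke the first inequality of Lemma~\ref{l:Fkell-bound1}:
\begin{equation*}
\frac{\aF(k+1,\ell+1)}{\aF(k,\ell+2)} \ \leq \ \min\{t(z_2), k\} \ + \ \min\{b(z_1,z_2)-2,\, t^*(z_1)\} \cdot \bigl(t^*(z_3) + t(z_2)\bigr).
\end{equation*}
For the bound $2t^2+t$, I would throw away the minima with $k$ and with $b(z_1,z_2)-2$, obtaining $t(z_2) + t^*(z_1)\,(t^*(z_3)+t(z_2)) \leq t + t(2t) = 2t^2+t$. For the bound $k(2t+1)$, I would instead use the vanishing condition: since $\aF(k,\ell+2)>0$, Theorem~\ref{thm:vanishing} yields $b(z_1,z_2) \leq k+1$, hence $b(z_1,z_2)-2 \leq k-1$. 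The first term is at most $k$, and the second is at most $(k-1)(t^*(z_3)+t(z_2)) \leq (k-1)(2t) \leq 2tk$, giving a total of at most $k+2tk = k(2t+1)$. Taking the minimum of the two bounds yields the first inequality.

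The second inequality (assuming $\aF(k+2,\ell)>0$) follows by the same argument applied to the second inequality of Lemma~\ref{l:Fkell-bound1}, using the symmetric bounds $t^*(z_2),\, t(z_1),\, t(z_3) \leq t$ together with the vanishing bound $b(z_2,z_3) \leq \ell+1$ from Theorem~\ref{thm:vanishing}. There is no real obstacle here: all structural work is done by Lemma~\ref{l:Fkell-bound1}, and the only content is the bookkeeping verification that the thin/flat hypothesis with respect to $A=\{z_1,z_2,z_3\}$ controls $t(z_i)$ and $t^*(z_i)$ uniformly by~$t$, together with the observation that nonvanishing of $\aF(k,\ell+2)$ (resp. $\aF(k+2,\ell)$) forces the relevant interval to have size at most $k+1$ (resp. $\ell+1$).
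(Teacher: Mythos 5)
Your proposal is correct and follows the same route as the paper, whose proof of this corollary is a one-line remark that the two bounds come from different choices in the minima of Lemma~\ref{l:Fkell-bound1}. You have merely filled in the details the paper leaves implicit: the verification that $t(z_i),t^*(z_i)\le t$ in both the thin and flat cases (asserted without proof in $\S$4.1) and the use of Theorem~\ref{thm:vanishing} to get $b(z_1,z_2)-2\le k-1$ for the $k(2t+1)$ bound, exactly as the paper does in the parallel proof of Theorem~\ref{t:main-part1}.
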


\begin{proof}
These inequalities come from different choices in the
minima on the RHS of inequalities in Lemma~\ref{l:Fkell-bound1}.
\end{proof}

\smallskip

\begin{thm}\label{t:thin-part1}
Let \ts $P=(X,\prec)$ \ts be either a $t$-thin or $t$-flat poset with respect to \. $\{z_1,z_2,z_3\}$.
Suppose also that \. $\aFr(k,\ell+2) \. \aFr(k+2,\ell) >0$. Then:
\begin{align*}
\frac{\aFr(k+1,\ell)\aFr(k,\ell+1)}{\aFr(k,\ell)\aFr(k+1,\ell+1)} \ \geq \
\max \left\{\frac{1}{2} \, + \, \frac{1}{2\ts \sqrt{k\ell} \ts (2t+1)} \ , \,
\frac{1}{2} \, + \,\frac{1}{2 \ts (2t^2+t)}\right\}.
\end{align*}
%i\com{SH}{There is a small improvement to the inequality above because a typo got fixed. Please check.}
\end{thm}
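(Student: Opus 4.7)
The plan is to chain together Proposition~\ref{prop:cpc1+eps} with the two upper bounds of Corollary~\ref{cor:thin}. In outline, Proposition~\ref{prop:cpc1+eps} already gives the lower bound of the required shape,
\[
\frac{\aFr(k+1,\ell)\aFr(k,\ell+1)}{\aFr(k,\ell)\aFr(k+1,\ell+1)} \ \geq \ \frac{1}{2} \, + \, \frac{\sqrt{\aFr(k,\ell+2)\aFr(k+2,\ell)}}{2\aFr(k+1,\ell+1)}\ts,
\]
so the only remaining task is to bound the fraction on the right from below. To do this, I would bound its denominator, namely $\aFr(k+1,\ell+1)$, by the \emph{geometric mean} of $\aFr(k,\ell+2)$ and $\aFr(k+2,\ell)$.

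Concretely, Corollary~\ref{cor:thin} under the thin/flat hypothesis gives
\[
\aFr(k+1,\ell+1) \ \leq \ \aFr(k,\ell+2)\.\cdot\.\min\{k(2t+1),\ts 2t^2+t\}
\]
and symmetrically
\[
\aFr(k+1,\ell+1) \ \leq \ \aFr(k+2,\ell)\.\cdot\.\min\{\ell(2t+1),\ts 2t^2+t\}.
\]
Multiplying and taking square roots yields
\[
\aFr(k+1,\ell+1) \ \leq \ \sqrt{\aFr(k,\ell+2)\aFr(k+2,\ell)} \.\cdot\. \sqrt{\min\{k(2t+1),2t^2+t\}\.\min\{\ell(2t+1),2t^2+t\}}\ts,
\]
which is precisely the denominator we need to control. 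Substituting this back into the displayed consequence of Proposition~\ref{prop:cpc1+eps} gives
\[
\frac{\aFr(k+1,\ell)\aFr(k,\ell+1)}{\aFr(k,\ell)\aFr(k+1,\ell+1)} \ \geq \ \frac{1}{2} \, + \, \frac{1}{2\sqrt{\min\{k(2t+1),2t^2+t\}\.\min\{\ell(2t+1),2t^2+t\}}}\ts.
\]

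To finish, I would estimate the radical by dropping one argument of each minimum in two different ways: keeping the $k(2t+1)$ and $\ell(2t+1)$ terms gives the bound $\sqrt{k\ell}\ts(2t+1)$, yielding the first term in the maximum; keeping the $2t^2+t$ term in both minima gives the bound $2t^2+t$, yielding the second term. Taking the better of the two choices produces the stated $\max\{\cdot,\cdot\}$. There is no real obstacle here beyond carefully invoking the positivity hypothesis $\aFr(k,\ell+2)\aFr(k+2,\ell)>0$ (which is exactly what Corollary~\ref{cor:thin} requires to apply both of its bounds simultaneously) and observing that $\aFr(k,\ell)\aFr(k+1,\ell+1)>0$ holds automatically by Corollary~\ref{c:vanish-either}, so that Proposition~\ref{prop:cpc1+eps} is applicable.
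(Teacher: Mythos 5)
Your proof follows exactly the paper's route: the paper's own proof is the one-line observation that the theorem follows from Proposition~\ref{prop:cpc1+eps} and Corollary~\ref{cor:thin}, and your chain --- multiply the two bounds of Corollary~\ref{cor:thin}, take square roots to control \. $\aFr(k+1,\ell+1)\big/\sqrt{\aFr(k,\ell+2)\.\aFr(k+2,\ell)}$\., then relax each minimum in the two ways that produce the two terms of the maximum --- is precisely that computation. The main argument is correct.

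One caveat: your closing remark that \. $\aFr(k,\ell)\.\aFr(k+1,\ell+1)>0$ \. ``holds automatically by Corollary~\ref{c:vanish-either}'' is wrong on two counts. That corollary asserts a different implication (vanishing of \. $\aFr(k+1,\ell)\.\aFr(k,\ell+1)$ \. forces vanishing of \. $\aFr(k,\ell)\.\aFr(k+1,\ell+1)$), and the implication you actually need is false. For example, take \. $X=\{z_1,z_2,z_3,w_1,w_2\}$ \. with \. $z_1\prec z_2\prec z_3$, \. $z_1\prec w_i\prec z_3$, and \. $w_1,w_2,z_2$ \. pairwise incomparable; then \. $\aFr(1,3),\ts\aFr(3,1)>0$ \. but \. $\aFr(1,1)=0$, because the support hexagon of Theorem~\ref{thm:vanishing} can exclude \. $(k,\ell)$ \. via the constraint \. $k+\ell\ge b(z_1,z_3)-1$ \. even when it contains \. $(k,\ell+2)$ \. and \. $(k+2,\ell)$. (Convexity does guarantee \. $(k+1,\ell+1)$ \. lies in the support, being the midpoint of those two points, but it says nothing about \. $(k,\ell)$.) The positivity of the denominator must instead be read as an implicit hypothesis of the ratio form of the statement --- which is how the paper treats it, dispatching the case \. $\aFr(k,\ell)\.\aFr(k+1,\ell+1)=0$ \. as trivial in the proof of Theorem~\ref{t:thin}.
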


\smallskip

\begin{proof}
	These inequalities follow from Proposition~\ref{prop:cpc1+eps} and the inequalities in Corollary~\ref{cor:thin}.
\end{proof}

\smallskip

\begin{thm}\label{t:main-part1}
Suppose that \. $\aFr(k,\ell+2) \. \aFr(k+2,\ell) > 0$. Then:
$${\aFr(k+1,\ell)\. \aFr(k,\ell+1)} \, \geq \, {\aFr(k,\ell) \. \aFr(k+1,\ell+1)} \bigg( \frac12 \, + \, \frac{1}{2 \ts \sqrt{(2nk-2n-k+2)(2n\ell-2n-\ell+2)} }\bigg).$$
\end{thm}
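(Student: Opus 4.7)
The plan is to combine Proposition~\ref{prop:cpc1+eps} with the ratio bounds in Lemma~\ref{l:Fkell-bound1}. Proposition~\ref{prop:cpc1+eps} already produces an inequality of exactly the desired shape $\tfrac{1}{2} + \epsilon$, with
\[
\epsilon \ = \ \frac{\sqrt{\aF(k,\ell+2)\,\aF(k+2,\ell)}}{2\,\aF(k+1,\ell+1)}\,,
\]
so the entire task reduces to producing a lower bound on this $\epsilon$ by $1/\bigl(2\sqrt{(2nk-2n-k+2)(2n\ell-2n-\ell+2)}\bigr)$.

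As a preliminary I would dispose of the degenerate cases. If $\aF(k,\ell)=0$, the inequality is trivial since its right-hand side vanishes and the left-hand side is non-negative. Otherwise, because $(k+1,\ell+1)$ is the midpoint of $(k,\ell+2)$ and $(k+2,\ell)$, and Theorem~\ref{thm:vanishing} describes the support of $\aF(\cdot,\cdot)$ as a convex (hexagonal) region in $\Nb^2$, the positivity hypothesis $\aF(k,\ell+2)\,\aF(k+2,\ell)>0$ automatically forces $\aF(k+1,\ell+1)>0$. Thus $\aF(k,\ell)\,\aF(k+1,\ell+1)>0$ and Proposition~\ref{prop:cpc1+eps} applies without issue.

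Next I would feed in Lemma~\ref{l:Fkell-bound1} to control the two ratios $\aF(k+1,\ell+1)/\aF(k,\ell+2)$ and $\aF(k+1,\ell+1)/\aF(k+2,\ell)$. Only two easy ingredients are needed: (i) the trivial estimate $t(x), t^*(x)\leq n-1$ for every $x\in X$; and (ii) the vanishing criterion of Theorem~\ref{thm:vanishing}, which from $\aF(k,\ell+2)>0$ forces $b(z_1,z_2)\leq k+1$ and symmetrically from $\aF(k+2,\ell)>0$ forces $b(z_2,z_3)\leq\ell+1$. Substituting these into the first bound of Lemma~\ref{l:Fkell-bound1} collapses it to
\[
\frac{\aF(k+1,\ell+1)}{\aF(k,\ell+2)} \ \leq \ k \, + \, (k-1)(2n-2) \ = \ 2nk-2n-k+2,
\]
and the second bound becomes $\aF(k+1,\ell+1)/\aF(k+2,\ell)\leq 2n\ell-2n-\ell+2$ by the same calculation.

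The final step is pure bookkeeping: multiplying the two bounds and taking square roots yields
\[
\frac{\aF(k+1,\ell+1)}{\sqrt{\aF(k,\ell+2)\,\aF(k+2,\ell)}} \ \leq \ \sqrt{(2nk-2n-k+2)(2n\ell-2n-\ell+2)}\,,
\]
which is precisely the reciprocal of $2\epsilon$. Plugging this back into Proposition~\ref{prop:cpc1+eps} and clearing the factor $\aF(k,\ell)\,\aF(k+1,\ell+1)$ delivers the claimed inequality. I do not foresee a real obstacle: the genuine combinatorial content already sits inside the injection constructed for Lemma~\ref{l:Fkell-bound1}, and the present argument is essentially arithmetic with the vanishing constraints.
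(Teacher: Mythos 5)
Your proposal is correct and follows essentially the same route as the paper: bound the two ratios via Lemma~\ref{l:Fkell-bound1} using $t(x),t^*(x)\le n-1$ and the vanishing criterion $b(z_1,z_2)\le k+1$, $b(z_2,z_3)\le\ell+1$, then feed the result into Proposition~\ref{prop:cpc1+eps}. Your explicit handling of the degenerate case $\aF(k,\ell)=0$ and the convexity argument showing $\aF(k+1,\ell+1)>0$ is a small but welcome addition that the paper's proof leaves implicit.
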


\smallskip

\begin{proof}
	It follows from the definition that \. $t(x), t^*(x) \leq n-1$ \. for every \ts $x \in X$.
	The nonvanishing condition in the assumption, combined with Theorem~\ref{thm:vanishing}
implies that \. $b(z_1,z_2)\leq k+1$ \. and \. $b(z_2,z_3) \leq \ell+1$.  It then follows from Lemma~\ref{l:Fkell-bound1} that
	\[ \frac{ \aF(k+1,\ell+1) }{\aF(k,\ell+2)}  \ \leq \  k+(k-1) (2t) \ \leq \ k+(k-1)(2n-2) \ = \ 2nk-2n-k+2.\]
	Similarly, we have:
		\[ \frac{ \aF(k+1,\ell+1) }{\aF(k+2,\ell)}  \ \leq \     2n\ell-2n+1.\]
%	\frac{ \aF(k+1,\ell+1) }{\aF(k+2,\ell)} \leq \ell(2t+1) \leq \ell(2n-1).
	The theorem now follows from Proposition~\ref{prop:cpc1+eps}.
\end{proof}

\bigskip

\section{Cross product inequalities in the vanishing case } \label{s:cross-vanish}

\subsection{Double element ratio bounds} \label{ss:vanish-double}
As before, let \. $P=(X,\prec)$ \. be a poset with \. $|X|=n$ \. elements, and let
\. $z_1\prec z_2 \prec z_3$ \. be distinct elements in~$X$.
The following are the counterparts of the
cross product inequalities in~$\S$\ref{ss:words-double}.

\smallskip

\begin{lemma}\label{l:Fkell-bound2}
Suppose that \. $\aFr(k,\ell)> 0$. Then:
$$\aligned
& \frac{\aFr(k+1,\ell)}{\aFr(k,\ell)} \ \leq \ \min\{ k, t^*(z_1)\} \,  +\, \min\{k, t(z_3)-1\}\,  +\, \\
& \qquad + \, \min\{b(z_1,z_2)-1,t(z_2)\} \. \big(\min\{\ell-1,t(z_3)\} + \min\{\ell-1,t^*(z_1)-1\}\big).
\endaligned$$
\end{lemma}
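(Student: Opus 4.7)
The plan is to construct an injection $\psi: \cF(k+1, \ell) \hookrightarrow \cF(k, \ell) \times I$ with $I = I_1 \sqcup I_2 \sqcup I_3 \sqcup I_4$, where $|I_j|$ equals the $j$-th summand of the RHS. As in Lemma~\ref{l:Fkell-bound1}, the four cases correspond to four ``atomic'' move types sending a word from $\cF(k+1, \ell)$ to $\cF(k, \ell)$. Writing $z_1, z_2, z_3$ at positions $i$, $i+k+1$, $i+k+\ell+1$ in $\xx$, the moves are: (M4) move $x_j$ with $j \in [i+1, i+k]$ leftward past $z_1$ via $\tau_{j-1} \cdots \tau_i$; (M6) move such an $x_j$ rightward past $z_3$; (M5+M8) move $x_{j''} \in (z_1, z_2)$ past $z_2$, then move an element $x_r$ (originally in $(z_2, z_3)$) past $z_3$; (M5+M7) the same first step followed by moving $x_r$ leftward past $z_1$.

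Case~1 applies when some $x_j \| z_1$ with $j \in [i+1, i+k]$ exists. Take the smallest such $j$ and apply (M4); the transitivity argument from Lemma~\ref{l:Fkell-bound1} (if $x_r \succ z_1$ and $x_r \prec x_j$ then $x_j \succ z_1$) shows $x_r \| x_j$ for each $r \in [i+1, j-1]$, so the move is valid and $\{z_1, x_{i+1}, \ldots, x_{j-1}\}$ is a $(j-i)$-element subset of $\cL^*(z_1, x_j)$, bounding $j - i$ by $\min\{k, t^*(z_1)\}$. Case~2 applies if Case~1 fails and the largest $j^* \in [i+1, i+k+\ell]$ with $x_{j^*} \| z_3$ lies in $(z_1, z_2)$; apply (M6) with $j = j^*$, and the dual transitivity argument gives $x_r \| x_{j^*}$ for $r \in [j^*+1, i+k+\ell]$. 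The subset $\{z_3, z_2\} \cup \{x_r : j^* < r \leq i+k+\ell,\ r \neq i+k+1\}$ of $\cL(z_3, x_{j^*})$ has size $i+k+\ell+1-j^*$, which combined with $\ell \geq 1$ yields the record $i+k+1-j^* \leq \min\{k, t(z_3)-1\}$.

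For Cases~3 and~4, the first step is (M5) using the largest $j'' \in [i+1, i+k]$ with $x_{j''} \| z_2$; such $j''$ exists because ``not Case~1'' together with $\aF(k,\ell) > 0$ rules out $\{z_1, z_2, x_{i+1}, \ldots, x_{i+k}\} \subseteq B(z_1, z_2)$ by Theorem~\ref{thm:vanishing}. A standard argument yields $i+k+1-j'' \leq \min\{b(z_1,z_2)-1, t(z_2)\}$. Case~3 (where $j^*$ from the previous paragraph lies in $(z_2, z_3)$) applies (M8) to $x_{j^*}$ as the second step, producing the second record $i+k+\ell+1-j^* \leq \min\{\ell-1, t(z_3)\}$. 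Case~4 (no $x_j \| z_3$ in $[i+1, i+k+\ell]$ but some $x_r \| z_1$ in $[i+k+2, i+k+\ell]$) takes the smallest such $r$ and applies (M7); transitivity shows $x_r$ is incomparable with $z_1, z_2$ and all $x_s$ for $s \in [i+1, i+k] \cup [i+k+2, r-1]$, yielding an $(r-i)$-element subset of $\cL^*(z_1, x_r)$ and hence $r - i - k - 1 \leq \min\{\ell-1, t^*(z_1) - 1\}$.

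Exhaustivity uses Theorem~\ref{thm:vanishing} again: if no case applies then all $x_r$ for $r \in [i+1, i+k+\ell]$ are $\succ z_1$ and $\prec z_3$, forcing $b(z_1, z_3) \geq k+\ell+2$ and contradicting $\aF(k,\ell) > 0$. Injectivity holds because in $\psi(\xx)$ the moved elements sit at canonical positions (just before $z_1$, just after $z_2$, or just after $z_3$), so the records uniquely determine the inverse moves. The main obstacle will be matching minimal versus maximal canonical choices in each case so that the transitivity arguments produce precisely the incomparabilities needed, and carefully tracking which ``fixed'' elements (notably $z_3$ in $\cL(z_3, \cdot)$ and the entire $(z_1, z_2)$-block in $\cL^*(z_1, \cdot)$) absorb the slack responsible for the $-1$ adjustments in $t(z_3) - 1$ and $t^*(z_1) - 1$.
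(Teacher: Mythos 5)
Your proposal is correct and follows essentially the same route as the paper's proof: an injection $\cF(k+1,\ell)\to I\times\cF(k,\ell)$ with $I=I_1\sqcup I_2\sqcup I_3\sqcup I_4$, the same four moves, and the same subsets of $\cL^*(z_1,\cdot)$, $\cL(z_3,\cdot)$, $B(z_1,z_2)$ and $\cL(z_2,\cdot)$ bounding the records. The only difference is bookkeeping — you split Cases 2--4 according to the location (or absence) of the last element incomparable to $z_3$, whereas the paper splits on whether all elements of the $(z_2,z_3)$-block are comparable to both $z_1$ and $z_3$ — and both divisions are exhaustive and yield identical interval sizes.
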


\smallskip

Note that the nonvanishing condition implies that \. $b(z_1,z_2)\leq k+1$ \. and \. $b(z_2,z_3) \leq \ell+1$.

\smallskip

\begin{proof}
	We proceed as in the proof of Lemma~\ref{l:Fkell-bound1}, constructing an injection
\. $\psi: \cF(k+1,\ell) \to I \times \cF(k,\ell)$, where \. $I=I_1\sqcup I_2 \sqcup I_{3} \sqcup I_4$ \.
are intervals of lengths specified by the RHS, each of them given in the corresponding case below.

Let $\xx \in \cF(k+1,\ell)$ be a word (corresponding to a linear extension) with \. $x_i=z_1$, \.
$x_{i+k+1}=z_2$ \. and \. $x_{i+k+\ell+1}=z_3$.
We consider several independent cases, which correspond to different parts of the interval~$I$:

\smallskip

\nin
{\bf Case 1:} \ts Suppose that there exists \. $x_j \ts\|\ts{} z_1$ \. with \. $j \in [i+1,i+k]$ \. and let $j$ be the minimal such index. Then \. $\{x_i,\ldots,x_{j-1}\} \subseteq \cL^*(z_1,x_j)$ \. and  \. $j-i \leq \min\{k,t^*(z_1)\}$. Take \. $\xx \tau_{j-1} \cdots \tau_i$, which moves $x_j$ to position $i$ and $z_1$ to position $i+1$.  Then the resulting word is in $\cF(k,\ell)$, and we record the value \. $(j-i)$ \. in the first interval $I_1=[1,\min\{ k, t^*(z_1)\}]$.

\smallskip

\nin
{\bf Case 2:} \ts Suppose that \. $x_j \succ z_1$ \. for all \. $j \in [i+1,i+k]$.  Furthermore, suppose that  \.
$x_r \succ z_1$ \. and \. $x_r \prec z_3$ \. for all \. $r \in [i+k+2,i+k+\ell]$.
These assumptions imply that there exists \. $j \in [i+1,i+k]$ \. such that \. $x_j\ts\|\ts{} z_3$, as otherwise we have
\. $\{x_{i}, \ldots, x_{i+k+\ell+1}\} \in B(z_1,z_3)$, contradicting the assumption that \. $\aF(k,\ell)>0$.
Assume that  $j$ is the maximal such index~$j$.  It then follows that \. $\{x_{j+1},\ldots, x_{i+k+\ell+1}\} \subseteq \cL(z_3,x_j) $.
This implies that \. $i+k+\ell+1-j \leq t(z_3)$, which in turn implies that \.
 $i+k+1-j\leq t(z_3)-\ell \leq t(z_3)-1$.
Then we take \. $\xx' = \xx \tau_{j}\cdots \tau_{i+k+\ell+1} \in\cF(k,\ell)$ \. and record the value \.
$(i+k+1-j)$ \. in the second interval \. $I_2=[1,\min\{k, t(z_3)-1\}]$.

\smallskip

\nin
{\bf Case 3:} \ts Suppose again that \.$x_j \succ z_1$ \. for all \. $j \in [i+1,i+k]$, but now  that there exists \.
$r \in [i+k+2,i+k+\ell]$ \. such that either \. $x_r \ts\|\ts{} z_1$ \. or \. $x_r \ts\|\ts{} z_3$.
The first condition implies that there exists \. $x_j \ts\|\ts{}z_2$ \. with \. $j\in [i+1,i+k]$, as otherwise we would have \. $\aF(k,\ell)=0$.
 Let $j$ be the maximal such index.  Then \.
 $\{x_{j+1},\ldots,x_{i+k+1} \} \subseteq B(z_1,z_2)-z_1$, and thus \. $i+k+1-j \leq b(z_1,z_2)-1$.
 Also note that \.
 $\{x_{j+1},\ldots,x_{i+k+1} \} \subseteq U(z_2,x_j)$, and thus \. $i+k+1-j \leq t(z_2)$.
 Move \ts $x_j$ \ts right past \ts $z_2$ \ts via \. $\xx \tau_j \cdots \tau_{i+k+1}$ \. and record that move with \.
 $s:=i+k+1-j \leq \min\{b(z_1,z_2)-1,t(z_2)\}$.  We now consider the new word \. $\xx' \in \cF(k,\ell+1)$.
 We split this case into two subcases.

\smallskip

\nin
{\bf Subcase 3.1:} \ts Suppose that there exists an element \. $x_r'=x_r \ts\|\ts{}z_3$ \. for some \. $r \in [i+k+2,i+k+\ell]$.
Let $r$ be the maximal such index.  Then \. $\{ x'_{r+1},\ldots,x'_{i+k+\ell+1} \} \subseteq \cL(z_3,x'_r)$ \. and \. $i+k+\ell+1-r \leq t(z_3)$.
 We then create the word \. $\xx' \tau_r \cdots \tau_{i+k+\ell+1}\in \cF(k,\ell)$ \. where \ts $x'_r$ \ts is moved past \ts $z_3$. We record the pair \. $(s,i+k+\ell+1-r)$ \. in the product of intervals \. $I_3=[1,\min\{b(z_1,z_2)-1,t(z_2)\}] \times [1,\min\{\ell-1,t(z_3)\}]$.

\smallskip

\nin
{\bf Subcase 3.2:} \ts Suppose that there exists \.
$x_r'=x_r \ts\|\ts{}z_1$ \. for \. $r \in [i+k+2,i+k+\ell]$.  We take the minimal such \ts $r$.
Then \. $\{x'_{i},\ldots, x'_{r-1}\} \subseteq \cL^*(z_1, x_r')$ and thus $r-i\leq t^*(z_1)$.
This in turn implies that \. $r-i-k-1\leq t^*(z_1)-k-1 \leq t^*(z_1)-1$.
Take a word \. $\xx''\in \cF(k,\ell)$ \. by moving \ts $x_r'$ \ts to the position
before \ts $z_1$ \ts and record the pair \. $(s,r-i-k-1)$ \. in the product of intervals
\. $I_4=[1,\min\{b(z_1,z_2)-1,t(z_2)\}] \times  [1,\min\{\ell-1,t^*(z_1)-1\}]$.

\smallskip

Gathering these cases we obtain the desired inequality in the lemma.\end{proof}

\smallskip

\begin{lemma}\label{l:Fkl-bound3}
Suppose that \. $\aFr(k+2,\ell)> 0$. Then:
$$\frac{\aFr(k+1,\ell)}{\aFr(k+2,\ell)} \ \leq \ t(z_1) \. + \. \big(t^*(z_2)-1\big) \. + \. \min\big\{\ell-1,t^*(z_2)\big\} \. t^*(z_3). $$
\end{lemma}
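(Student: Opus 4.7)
The plan is to construct an explicit injection $\psi: \cF(k+1,\ell) \to I \times \cF(k+2,\ell)$, following the blueprint of the proofs of Lemmas~\ref{l:Fkell-bound1} and~\ref{l:Fkell-bound2}, where $I = I_1 \sqcup I_2 \sqcup I_3$ is a disjoint union of intervals of respective sizes $t(z_1)$, $t^*(z_2)-1$, and $\min\{\ell-1,t^*(z_2)\}\cdot t^*(z_3)$. Given a word $\xx \in \cF(k+1,\ell)$ with $x_i=z_1$, $x_{i+k+1}=z_2$, and $x_{i+k+\ell+1}=z_3$, the target word in $\cF(k+2,\ell)$ would be produced either by shifting $z_1$ one step to the left (Case 1, contributing $I_1$), or by shifting both $z_2$ and $z_3$ one step to the right via a single long swap (Case 2, contributing $I_2$) or two separate swaps (Case 3, contributing $I_3$).

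In Case 1, I would assume some $x_j$ with $j<i$ is incomparable to $z_1$, and take $j$ maximal; then $x_{j+1},\dots,x_{i-1}$ are all $\prec z_1$ and therefore $\ts\|\ts{} x_j$ by the standard linear-extension argument, so applying $\tau_j\cdots\tau_{i-1}$ moves $x_j$ past $z_1$ to yield a word in $\cF(k+2,\ell)$, recording $i-j \in [1,t(z_1)]$ via $\{z_1,x_{j+1},\dots,x_{i-1}\}\subseteq\cL(z_1,x_j)$. If Case 1 fails, then $x_1,\dots,x_{i-1}\prec z_1$ and $b(z_1)=i$, and I would split the remaining analysis by whether some $x_r \in \{x_{i+k+2},\dots,x_{i+k+\ell}\}$ is incomparable to $z_2$.

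If such an $x_r$ exists (Case 3), take $r$ minimal, so that $x_{i+k+2},\dots,x_{r-1}\succ z_2$; applying $\tau_{r-1}\cdots\tau_{i+k+1}$ moves $x_r$ past $z_2$ and lands in $\cF(k+2,\ell-1)$, with the bound $r-(i+k+1)\in[1,\min\{\ell-1,t^*(z_2)\}]$ coming from $\{z_2,x_{i+k+2},\dots,x_{r-1}\}\subseteq\cL^*(z_2,x_r)$. To finish, I would push $z_3$ one step to the right using the minimal $s>i+k+\ell+1$ with $x_s\ts\|\ts{} z_3$; existence of such $s$ is forced by Theorem~\ref{thm:vanishing} applied to $\aF(k+2,\ell)>0$ (otherwise $b^*(z_3)$ would exceed the permitted bound given $b(z_1)=i$), and the bound $s-(i+k+\ell+1)\le t^*(z_3)$ is standard. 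Record the pair in $I_3$.

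In the remaining Case 2, every $x_r\in\{x_{i+k+2},\dots,x_{i+k+\ell}\}$ satisfies $x_r\succ z_2$, so $b^*(z_2)\ge\ell+1$; Theorem~\ref{thm:vanishing} applied to $\aF(k+2,\ell)>0$ then gives $b(z_1)+b^*(z_2)\le n-k-1$, and a short count yields some $s>i+k+\ell+1$ with $x_s\ts\|\ts{} z_2$ (automatically $x_s\ts\|\ts{} z_3$ since $z_2\prec z_3$). For the minimal such $s$, applying $\tau_{s-1}\cdots\tau_{i+k+1}$ sweeps $x_s$ past $z_3$ and $z_2$ in a single pass, shifting each right by one and producing a word in $\cF(k+2,\ell)$; the containment $\{x_{i+k+1},\dots,x_{s-1}\}\subseteq\cL^*(z_2,x_s)$ gives $s-(i+k+1)\le t^*(z_2)$, whence $s-(i+k+\ell+1)\le t^*(z_2)-\ell\le t^*(z_2)-1$ (using $\ell\ge 1$), which is recorded in $I_2$. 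The intervals $I_1,I_2,I_3$ are disjoint and each $\tau$-sequence is reversible, so $\psi$ is injective. The main obstacle I foresee is Case 2: realizing the single long sweep and sharpening the a priori bound $t^*(z_2)$ to $t^*(z_2)-1$ relies on the counting argument through Theorem~\ref{thm:vanishing}, mirroring the corresponding $-1$ gain in the proof of Lemma~\ref{l:Fkell-bound2}.
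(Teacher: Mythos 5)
Your proof is correct and follows essentially the same route as the paper's: an injection $\cF(k+1,\ell)\to I\times\cF(k+2,\ell)$ with the identical three-way case split (your Case~1 is the paper's Case~1; your Cases~2 and~3 are the paper's Subcases~2.1 and~2.2, keyed on whether the first element incomparable to $z_2$ after position $i+k+1$ lies beyond $z_3$ or strictly between $z_2$ and $z_3$), the same $\tau$-sweeps, and the same bounds $t(z_1)$, $t^*(z_2)-\ell\le t^*(z_2)-1$, and $\min\{\ell-1,t^*(z_2)\}\cdot t^*(z_3)$. The only difference is cosmetic: you supply the counting arguments via Theorem~\ref{thm:vanishing} for the existence claims that the paper merely asserts.
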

\smallskip

\begin{proof}
	We proceed as in the proof of Lemma~\ref{l:Fkell-bound1}, constructing an injection \.
$\psi: \cF(k+1,\ell) \to I \times \cF(k+2,\ell)$, where \.
$I=I_1\sqcup I_2 \sqcup I_{3}$ \. are intervals of lengths specified
by the RHS corresponding to each case below.
	
Let \. $\xx \in \cF(k+1,\ell)$ \. be a word (corresponding to a linear extension) with \.
$x_i=z_1$, \. $x_{i+k+1}=z_2$ \. and \. $x_{i+k+\ell+1}=z_3$.
We consider three independent cases, which correspond to different intervals~$I_i$ (see below).

\smallskip

\nin
{\bf Case 1:} \ts Suppose that there exists \. $x_j \ts\|\ts{} z_1$ \. with \. $j \in [1,i-1]$, and let $j$ be the maximal such index. Then $\{x_{j+1},\ldots,x_{i}\} \subset \cL(z_1,x_j)$ and so  $i-j \leq t(z_1)$. We take \. $\xx \tau_{j} \cdots \tau_{i-1}$, which moves
\ts $x_j$ \ts to position \ts $i$ \ts and \ts $z_1$ \ts to position \ts $i-1$. Then the resulting word is in \ts
$\cF(k+2,\ell)$, and we record the value \ts $(i-j)$ \ts in the first interval \. $I_1=[1,t(z_1)]$.
	
\smallskip

\nin
{\bf Case 2:} \ts Suppose that	\. $x_j \prec z_1$ \. for all \. $j \in [1,i-1]$.
	Since \. $\aF(k+2,\ell)>0$, there exists \. $x_j \ts\|\ts{} z_2$ \. with \. $j \in [ i+k+2,n]$.
	Let $j$ be the minimal such index. Then \.
$\{x_{i+k+1},\ldots,x_{j-1} \} \subset \cL^*(z_2,x_j)$, and thus \.  $j-i-k-1 \leq t^*(z_2)$.
Move \ts $x_j$ \ts to the front of \ts $z_2$ \ts via \ts $\xx \tau_{j-1} \cdots \tau_{i+k+1}$ \ts
	to get a new word~$\ts\xx'$. We split this case into two subcases:
	
\smallskip

\nin
{\bf Subcase 2.1:} \ts Suppose that	\.  $j \in [i+k+\ell+2,n]$.  Then \. $\xx' \in \cF(k+2,\ell)$.
	Also note that \. $j-i-k-\ell-1 \leq t^*(z_2)-\ell \leq t^*(z_2)-1$.
	We then record the value \. $(j-i-k-\ell-1)$ \. in the second interval \. $I_2=[1,t^*(z_2)-1]$.
	
\smallskip

\nin
{\bf Subcase 2.2:} \ts Suppose that	\.  $j \in [i+k+2,i+k+\ell]$.  Then \. $\xx' \in \cF(k+2,\ell-1)$.
	By the assumption of Case~2 and the fact that \. $\aF(k+2,\ell)>0$,
	there exists \. $r \in [i+k+\ell+2,n]$ \. such that \. $x_r' \ts\|\ts{} z_3$.
	Assume that \ts $r$ \ts is the minimal such index.  It then follows that \.
	$\{x'_{i+k+\ell+1},\ldots, x'_{r-1}\} \subseteq \cL^*(z_3,x_r')$.  This implies that \.
$(r-i-k-\ell-1) \leq t^*(z_3)$.
	Move \ts $x_r'$ \ts to the front of \ts $z_3$ \ts to obtain a new word \. $\xx'' \in \cF(k+2,\ell)$,
	and we record the value \ts $(j-i-k-1,r-i-k-\ell-1)$ \ts to the product of intervals
\ts $I_3 = [1,\min\{\ell-1,t^*(z_2)\}] \times [1,t^*(z_3)] $.

\smallskip

Gathering these cases we obtain the desired inequality in the lemma.\end{proof}

\medskip

\subsection{Bounds on cross product ratios} \label{ss:vanish-cross}
%
%Suppose that $\aF(k,\ell+2)=0$, but $\aF(k+1,\ell+1)\neq 0$. According to Theorem~\ref{thm:vanishing}, the point $(k,\ell+2)$ is outside the range, but $(k+1,\ell+1)$ is in, which is only possible when
%$$\ell +2 >n+1 -b^*(z_3) -b(z_2), \qquad \ell+1 \leq n+1 -b^*(z_3) -b(z_2).$$
%
% Then $\ell = n- b^*(z_3) - b(z_2)$.
%
We are now ready to obtain bounds on the cross product ratios in the vanishing
case.

\smallskip

\begin{prop}\label{p:Fk-bound1}
%Suppose that $\aFr(k,\ell+2)=0$, i.e. $\ell = n - b^*(z_3)-b(z_2)$.
Suppose that \. $\aFr(k,\ell) \. \aFr(k+2,\ell)> 0$.
Then
$$\frac{ \aFr(k,\ell) \. \aFr(k+2,\ell)}{\aFr(k+1,\ell)^2} \ \geq \ \frac{1}{2n\ell^2 k}\..
$$
\end{prop}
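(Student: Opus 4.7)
The plan is to reduce the proposition to the single-variable ratio bounds already established in Lemmas~\ref{l:Fkell-bound2} and~\ref{l:Fkl-bound3}. Rewrite the target as
\[
\frac{\aF(k,\ell)\,\aF(k+2,\ell)}{\aF(k+1,\ell)^2} \ = \ \frac{1}{R_1\cdot R_2}\,, \quad \text{where} \quad R_1:=\frac{\aF(k+1,\ell)}{\aF(k,\ell)}\,, \ \  R_2:=\frac{\aF(k+1,\ell)}{\aF(k+2,\ell)}\ts.
\]
Both ratios are well-defined because $\aF(k,\ell)\,\aF(k+2,\ell)>0$, and Corollary~\ref{c:vanish-either} (together with the nonvanishing hypothesis) guarantees that $\aF(k+1,\ell)>0$ as well, so neither denominator vanishes. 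Thus it suffices to show that \. $R_1\cdot R_2 \,\le\, 2\ts n\ts k\ts \ell^{2}$.

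First I would bound $R_1$ using Lemma~\ref{l:Fkell-bound2}. The nonvanishing of $\aF(k,\ell)$ combined with Theorem~\ref{thm:vanishing} gives \. $b(z_1,z_2)\le k+1$, so $\min\{b(z_1,z_2)-1,t(z_2)\}\le k$. Replacing every $\min\{k,\.\cdot\.\}$ by $k$ and each of \. $\min\{\ell-1,t(z_3)\}$ \. and \. $\min\{\ell-1,t^{*}(z_1)-1\}$ \. by $\ell-1$ yields the clean estimate
\[
R_1 \ \le \ 2k \. + \. k\ts\bigl(2(\ell-1)\bigr) \ = \ 2k\ell.
\]
Next I would bound $R_2$ using Lemma~\ref{l:Fkl-bound3}. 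Using only the trivial poset-size bounds $t(z_1),\ts t^{*}(z_2),\ts t^{*}(z_3)\le n-1$, plus $\min\{\ell-1,t^{*}(z_2)\}\le \ell-1$, gives
\[
R_2 \ \le \ (n-1) \. + \. (n-2) \. + \. (\ell-1)(n-1) \ \le \ n\ts\ell.
\]
Multiplying the two estimates produces \. $R_1\cdot R_2 \le 2\ts n\ts k\ts \ell^{2}$, which is exactly the desired inequality upon taking reciprocals.

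The main obstacle is purely bookkeeping of the constant: the straightforward use of $t(x),t^{*}(x)\le n-1$ gives a bound of the form $n(\ell+1)$ for $R_2$ rather than $n\ell$, so one must handle the low-order terms carefully (in particular, check $\ell=1$ separately, where the third summand in Lemma~\ref{l:Fkl-bound3} vanishes and the first two are at most $2(n-1)\le 2n$, while $R_1\le 2k$, so $R_1R_2\le 4nk\le 2nk\ell^{2}$ still holds after mild slack is absorbed). No deeper idea is required beyond combining the two injection bounds; the geometric machinery of Section~\ref{sec:CPC} will enter in the next step, when this proposition is fed into Proposition~\ref{p:cpc+eps0} via the identity $\sqrt{1-x}\le 1-x/2$ to produce the improvement over $1/2$ in \eqref{eq:main-thm-2}.
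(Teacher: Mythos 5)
Your decomposition into \. $R_1=\aF(k+1,\ell)/\aF(k,\ell)$ \. and \. $R_2=\aF(k+1,\ell)/\aF(k+2,\ell)$ \. and the bound \. $R_1\le 2k\ell$ \. via Lemma~\ref{l:Fkell-bound2} are exactly what the paper does. But your bound on $R_2$ has a genuine gap. The arithmetic \. $(n-1)+(n-2)+(\ell-1)(n-1)\le n\ell$ \. is false in general: the left side equals \. $n(\ell+1)-\ell-2$, so the claimed inequality requires \. $n\le \ell+2$. You half-acknowledge this, but your patch does not close the gap: for \. $\ell\ge 2$ \. you offer no fix at all (and \. $2k\ell\cdot n(\ell+1)=2nk\ell^2+2nk\ell>2nk\ell^2$, so the slack cannot be ``absorbed''), while for \. $\ell=1$ \. your own computation gives \. $R_1R_2\le 4nk$, and \. $4nk\le 2nk\ell^2=2nk$ \. is simply false.

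The missing observation is that the first two summands in Lemma~\ref{l:Fkl-bound3} should not be bounded separately by \. $n-1$ \. each. Since \. $t(z_1)\le b(z_1)$ \. and \. $t^*(z_2)\le b^*(z_2)$ \. by definition, and since \. $z_1\prec z_2$ \. forces \. $B(z_1)\cap B^*(z_2)=\emp$, one has \. $t(z_1)+t^*(z_2)\le b(z_1)+b^*(z_2)\le n$. Hence
\[
t(z_1)\.+\.\bigl(t^*(z_2)-1\bigr)\.+\.\min\{\ell-1,t^*(z_2)\}\ts t^*(z_3)\ \le\ (n-1)\.+\.(\ell-1)(n-1)\ =\ \ell(n-1)\ <\ n\ell\ts,
\]
which holds uniformly in \. $\ell\ge 1$ \. and gives \. $R_2< n\ell$ \. and therefore \. $R_1R_2\le 2nk\ell^2$ \. as required. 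With that one correction your argument coincides with the paper's proof.
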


\begin{proof}  First, observe that \. $b(z_1,z_2)\leq k+1$ \. and \. $t(z_1) + t^*(z_2) \leq b(z_1)+b^*(z_2) \leq n$.
We then have:
\begin{equation}\label{eq:Fk-bound1}
\aligned
 & \min\{b(z_1,z_2)-1,t(z_2)\}\ts \big(\min\{\ell-1,t(z_3)\} \. + \. \min\{\ell-1,t^*(z_1)-1\}\big)\\
 & \qquad + \, \min\{ k, t^*(z_1)\} \, +\, \min\{k, t(z_3)-1\} \ \leq \   k (2\ell-2) + 2k \ = \ 2k\ell
 \endaligned
\end{equation}
and
\begin{equation}\label{eq:Fk-bound2}
t(z_1) \, + \, (t^*(z_2)-1) \,  + \, \min\big\{\ell-1,t^*(z_2)\big\} \. t^*(z_3) \ \leq \ n-1 \. + \. (\ell-1)(n-1) \ < \ n\ell \ts.
\end{equation}
Lemmas~\ref{l:Fkell-bound2} and~\ref{l:Fkl-bound3} now give:
$$\frac{ \aF(k,\ell) \. \aF(k+2,\ell)}{\aF(k+1,\ell)^2} \ \geq \ \left(\frac{1}{n\ell}\right) \cdot \left(\frac{1}{2k\ell}\right),
$$
as desired.
\end{proof}

\smallskip

We also need the following variation on this proposition.

\smallskip

\begin{prop}\label{p:Fk-bound2}
	%Suppose that $\aFr(k,\ell+2)=0$, i.e. $\ell = n - b^*(z_3)-b(z_2)$.
Let \ts $P=(X,\prec)$ \ts be either a $t$-thin or $t$-flat poset with respect to \. $\{z_1,z_2,z_3\}$.
Suppose also that \. $\aFr(k,\ell) \. \aFr(k+2,\ell) >  0$. Then we have:
	\begin{align*}
		\frac{\aFr(k,\ell)\aFr(k+2,\ell)}{ \aFr(k+1,\ell)^2 }
%  \, \geq \, \frac{1}{2 k \ell \. (t+ \ell(t-1))}
        \ \geq \ \max \left\{\ts\frac{1}{2k\ell(\ell+1)t} \ , \, \frac{1}{2t(t+1)^3} \ts \right\}.
	\end{align*}
\end{prop}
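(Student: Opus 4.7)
The plan is to bound the two ratios $\aF(k+1,\ell)/\aF(k,\ell)$ and $\aF(k+1,\ell)/\aF(k+2,\ell)$ separately using Lemmas~\ref{l:Fkell-bound2} and~\ref{l:Fkl-bound3}, and then multiply the two reciprocal bounds to control $\aF(k,\ell)\aF(k+2,\ell)/\aF(k+1,\ell)^2$ from below. The preliminary step is to verify that under either hypothesis on $P$ we have $t(z_i), t^*(z_i) \leq t$ for every $z_i \in A$. In the $t$-flat case this is immediate from $t(x) \leq b(x)$, $t^*(x) \leq b^*(x)$, and $b(z_i)+b^*(z_i)\leq t+1$. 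In the $t$-thin case, I would use that $z_1 \prec z_2 \prec z_3$ is a chain inside $A$, so any $y$ incomparable to some $z_i$ necessarily lies in $X\setminus A$; the thin hypothesis then bounds the number of incomparables of $y$ by $t$, and since $\cL(z_i,y)$ and $\cL^*(z_i,y)$ are subsets of $\{z\in X : z\ts\|\ts y\}$, this gives $u(z_i,y),u^*(z_i,y) \leq t$ and hence the claimed bound on $t(z_i), t^*(z_i)$.

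To obtain the first bound $1/(2k\ell(\ell+1)t)$, I would keep the $k$- and $\ell$-factors alive in the minima of Lemma~\ref{l:Fkell-bound2} and use the $t$-bound only in Lemma~\ref{l:Fkl-bound3}. From $\aF(k,\ell)>0$ and Theorem~\ref{thm:vanishing} we have $b(z_1,z_2)-1 \leq k$, so the right-hand side of Lemma~\ref{l:Fkell-bound2} is bounded above by $k + k + k\cdot 2(\ell-1) = 2k\ell$. For Lemma~\ref{l:Fkl-bound3}, using $t(z_1), t^*(z_2), t^*(z_3)\leq t$ gives at most $t + (t-1) + (\ell-1)t \leq t(\ell+1)$. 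Multiplying the two estimates yields $\aF(k+1,\ell)^2/(\aF(k,\ell)\aF(k+2,\ell)) \leq 2k\ell(\ell+1)t$, which gives the first bound upon reciprocating.

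To obtain the second bound $1/(2t(t+1)^3)$, I would instead use the $t$-bounds uniformly in every minimum. Lemma~\ref{l:Fkell-bound2} then gives at most $t + (t-1) + t\cdot(t+(t-1)) = (2t-1)(t+1) \leq 2t(t+1)$, and Lemma~\ref{l:Fkl-bound3} gives at most $t + (t-1) + t\cdot t \leq (t+1)^2$. Taking the product produces the second bound, and the two estimates are combined via the maximum in the statement. The argument is routine bookkeeping once the uniform bound $t(z_i),t^*(z_i) \leq t$ is established; if there is any genuine technical point, it is this preliminary observation in the thin case, which really does use that $\{z_1,z_2,z_3\}$ is a chain so that every relevant incomparable element lies in $X\setminus A$ and is covered by the thin hypothesis.
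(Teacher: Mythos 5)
Your proposal is correct and follows essentially the same route as the paper: bound \. $\aFr(k+1,\ell)/\aFr(k,\ell)$ \. via Lemma~\ref{l:Fkell-bound2} and \. $\aFr(k+1,\ell)/\aFr(k+2,\ell)$ \. via Lemma~\ref{l:Fkl-bound3}, once keeping the $k,\ell$ factors (giving \. $2k\ell\cdot(\ell+1)t$) and once using the uniform bound \ts $t(z_i),t^*(z_i)\le t$ \ts (giving \. $2t(t+1)\cdot(t+1)^2$), exactly as in the paper's bounds \eqref{eq:Fk-bound1} and \eqref{eq:Fk-bound3}--\eqref{eq:Fk-bound5}. Your preliminary verification that \ts $t(z_i),t^*(z_i)\le t$ \ts in the $t$-thin case (using that \ts $\{z_1,z_2,z_3\}$ \ts is a chain, so any witness $y$ lies in \ts $X\sm A$) is a detail the paper only asserts, and it is right.
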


\begin{proof}
We follow the proof of the proposition above with the following adjustments.
For the first inequality in the maximum,
we replace the bound \eqref{eq:Fk-bound2} with the following:
\begin{equation}\label{eq:Fk-bound3}
t(z_1) \. + \. (t^*(z_2)-1) \.  + \. \min\big\{\ell-1,t^*(z_2)\big\} \. t^*(z_3) \,
\leq \, 2t-1 \. + \. (\ell-1)(t-1) \, < \, (\ell+1) t \ts.
\end{equation}
Now the first inequality follows from Lemmas~\ref{l:Fkell-bound2} and~\ref{l:Fkl-bound3},
with the parameters bounded by
\eqref{eq:Fk-bound1} and \eqref{eq:Fk-bound3}.

For the second inequality in the maximum, we replace the bound
\eqref{eq:Fk-bound1} and \eqref{eq:Fk-bound2} with the following:
\begin{equation}\label{eq:Fk-bound4}
\aligned
 & \min\{b(z_1,z_2)-1,t(z_2)\}\ts \big(\min\{\ell-1,t(z_3)\} \. + \. \min\{\ell-1,t^*(z_1)-1\}\big)\\
 & \qquad + \, \min\{ k, t^*(z_1)\} \, +\, \min\{k, t(z_3)-1\} \ \leq \   t(2t-1) + t + (t-1)  \, < \, 2t(t+1)
 \endaligned
\end{equation}
and
\begin{equation}\label{eq:Fk-bound5}
t(z_1) \, + \, (t^*(z_2)-1) \,  + \, \min\big\{\ell-1,t^*(z_2)\big\} \. t^*(z_3) \ \leq \ 2t-1 + t^2 \, < \, (t+1)^2.
\end{equation}
Now the second inequality follows from Lemmas~\ref{l:Fkell-bound2} and~\ref{l:Fkl-bound3},
with the parameters bounded by
\eqref{eq:Fk-bound4} and \eqref{eq:Fk-bound5}.
\end{proof}

\smallskip

\begin{thm}\label{t:main-part2}
Suppose that \. $\aFr(k+2,\ell)> 0$ \. and \. $\aFr(k,\ell+2)=0$.  Then we have:
$$
{\aFr(k+1,\ell)\. \aFr(k,\ell+1)} \ \geq \ {\aFr(k,\ell)\. \aFr(k+1,\ell+1)} \left( \frac12 \. + \. \frac{1}{16nk\ell^2} \right).
$$
\end{thm}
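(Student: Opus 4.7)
The goal is to combine the degenerate mixed volume inequality (Proposition~\ref{p:cpc+eps0}) with the ratio bound of Proposition~\ref{p:Fk-bound1}. Before doing so I would dispose of the trivial subcases. If $\aFr(k,\ell)\ts\aFr(k{+}1,\ell{+}1)=0$, then by Corollary~\ref{c:vanish-either} the support hexagon forces $\aFr(k{+}1,\ell)\ts\aFr(k,\ell{+}1)=0$ too, so both sides are zero and the inequality is trivial. Moreover, under the assumption $\aFr(k+2,\ell)>0$, if additionally $\aFr(k,\ell)=0$ then the right-hand side vanishes and the claim is again trivial. So I may assume $\aFr(k,\ell),\ts\aFr(k{+}1,\ell{+}1),\ts\aFr(k{+}2,\ell)$ are all strictly positive, while $\aFr(k,\ell{+}2)=0$.

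Under these hypotheses Proposition~\ref{p:cpc+eps0} applies and yields
\[
\frac{\aFr(k{+}1,\ell)\ts\aFr(k,\ell{+}1)}{\aFr(k,\ell)\ts\aFr(k{+}1,\ell{+}1)} \ \ge \ \bigl(1+\sqrt{1-R}\bigr)^{-1}, \qquad \text{where} \quad R \ := \ \frac{\aFr(k,\ell)\ts\aFr(k{+}2,\ell)}{\aFr(k{+}1,\ell)^{2}}.
\]
Proposition~\ref{p:Fk-bound1} (whose hypothesis $\aFr(k,\ell)\ts\aFr(k{+}2,\ell)>0$ is now in force) gives the lower bound $R \ge \frac{1}{2nk\ell^2}$. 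Substituting and using the elementary estimate $\sqrt{1-x}\le 1-x/2$ for $x\in[0,1]$ (applied to $x=\tfrac{1}{2nk\ell^2}\le 1$) bounds $\sqrt{1-R}\le 1-\frac{1}{4nk\ell^2}$.

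Plugging this back in gives
\[
\bigl(1+\sqrt{1-R}\bigr)^{-1} \ \ge \ \Bigl(2-\tfrac{1}{4nk\ell^2}\Bigr)^{-1} \ = \ \tfrac{1}{2}\Bigl(1-\tfrac{1}{8nk\ell^2}\Bigr)^{-1} \ \ge \ \tfrac{1}{2} + \tfrac{1}{16nk\ell^2},
\]
where the last inequality uses $(1-y)^{-1}\ge 1+y$ for $y\in[0,1)$. This is exactly the desired estimate.

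The only place where anything might go wrong is arithmetic bookkeeping of the constants; every step is a routine consequence of the already-established Propositions~\ref{p:cpc+eps0} and \ref{p:Fk-bound1}, together with the vanishing dichotomy from Corollary~\ref{c:vanish-either}. The main ``content'' of this theorem is thus packaged in the earlier injection-based ratio bound (Proposition~\ref{p:Fk-bound1}), whose proof in Section~\ref{s:cross-vanish} does the combinatorial heavy lifting; the present theorem is essentially a clean corollary obtained by chaining that bound through the degenerate Favard-type inequality.
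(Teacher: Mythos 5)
Your proof is correct and follows essentially the same route as the paper: chain Proposition~\ref{p:Fk-bound1} through Proposition~\ref{p:cpc+eps0} and finish with the elementary estimate \. $\frac{1}{1+\sqrt{1-\al}}\ge\frac12+\frac{\al}{8}$ \. for \ts $0\le\al<1$, which your explicit computation reproves. One small caveat: your appeal to Corollary~\ref{c:vanish-either} in the trivial case runs that corollary backwards --- the implication \. $\aFr(k,\ell)\ts\aFr(k{+}1,\ell{+}1)=0\Rightarrow\aFr(k{+}1,\ell)\ts\aFr(k,\ell{+}1)=0$ \. is false in general (see Example~\ref{ex:vanish-unequal}) --- but this is harmless, since the inequality is trivial there simply because its right-hand side vanishes while the left-hand side is nonnegative.
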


\begin{proof}
	We can assume that \. ${\aF(k,\ell)\. \aF(k+1,\ell+1)}>0$ \.
	as otherwise the result is trivial.
	Propositions~\ref{p:cpc+eps0} and~\ref{p:Fk-bound1} then give:
\begin{align*}
\frac{ \aF(k+1,\ell)\aF(k,\ell+1) }{\aF(k+1,\ell+1)\aF(k,\ell)}  \ \geq \ \bigg(1 + \sqrt{1 - \frac{1}{2nk\ell^2} } \.\bigg)^{-1}
 \ \geq \ \frac12 \, + \, \frac{1}{16nk\ell^2}\,,
\end{align*}
where the last inequality follows from \. $\frac1{1+\sqrt{1-\al}} \ge \frac{1}{2}+ \frac{\al}{8}$ \. for \ts $0\le \al< 1$.
\end{proof}

\smallskip

\begin{thm}\label{t:thin-part2}
Let \ts $P=(X,\prec)$ \ts be either a $t$-thin or $t$-flat poset with respect to \. $\{z_1,z_2,z_3\}$.
	Suppose also that \. $\aFr(k,\ell+2)=0$ \. and \. $\aFr(k+2,\ell) >  0$. Then we have:
$$
{\aFr(k+1,\ell) \. \aFr(k,\ell+1)} \ \geq \ {\aFr(k,\ell) \. \aFr(k+1,\ell+1)} \. \max
\left\{\frac12 \. + \. \frac{1}{16k\ell (\ell+1)t} \ ,\,\frac12 \. + \. \frac{1}{16t(t+1)^3} \right\}.$$
\end{thm}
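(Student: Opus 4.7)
The plan is to follow the exact same template as the proof of Theorem~\ref{t:main-part2}, substituting the thin/flat bound of Proposition~\ref{p:Fk-bound2} in place of the generic bound from Proposition~\ref{p:Fk-bound1}. Since all the geometric input (Favard-type cross-ratio bound in the degenerate case) and the algebraic inequality $\frac{1}{1+\sqrt{1-\alpha}} \ge \frac12 + \frac{\alpha}{8}$ for $0\le \alpha<1$ are already set up, this step is short.

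First, I would reduce to the case $\aF(k,\ell)\,\aF(k+1,\ell+1) > 0$, as otherwise both sides of the target inequality vanish and the statement is trivial. Under the remaining hypotheses $\aF(k,\ell+2)=0$ and $\aF(k+2,\ell) > 0$, Proposition~\ref{p:cpc+eps0} applies and yields
\[
\frac{\aF(k+1,\ell)\,\aF(k,\ell+1)}{\aF(k,\ell)\,\aF(k+1,\ell+1)} \ \ge \ \left(1 \. + \. \sqrt{1 - \frac{\aF(k,\ell)\,\aF(k+2,\ell)}{\aF(k+1,\ell)^2}}\right)^{-1}.
\]

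Next, I would plug in Proposition~\ref{p:Fk-bound2}, which, under the $t$-thin or $t$-flat hypothesis on $\{z_1,z_2,z_3\}$ together with $\aF(k,\ell)\,\aF(k+2,\ell) > 0$, gives
\[
\frac{\aF(k,\ell)\,\aF(k+2,\ell)}{\aF(k+1,\ell)^2} \ \ge \ \max\!\left\{\frac{1}{2k\ell(\ell+1)t},\ \frac{1}{2t(t+1)^3}\right\}.
\]
Call this lower bound $\alpha$; note that $\alpha \in (0,1]$ by Lemma~\ref{l:Fkl-bound3} (or directly by the Alexandrov--Fenchel consequence \eqref{eq:LogC-3}, since $\aF(k+1,\ell)^2 \ge \aF(k,\ell)\,\aF(k+2,\ell)$). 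Combining the two displays with the elementary estimate $\frac{1}{1+\sqrt{1-\alpha}} \ge \frac12 + \frac{\alpha}{8}$ yields
\[
\frac{\aF(k+1,\ell)\,\aF(k,\ell+1)}{\aF(k,\ell)\,\aF(k+1,\ell+1)} \ \ge \ \frac12 \. + \. \frac{\alpha}{8},
\]
and taking each choice in the maximum for $\alpha$ converts the factor $\frac{1}{8\cdot 2k\ell(\ell+1)t}$ and $\frac{1}{8\cdot 2t(t+1)^3}$ into exactly $\frac{1}{16k\ell(\ell+1)t}$ and $\frac{1}{16t(t+1)^3}$, as required.

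There is essentially no obstacle: all the difficult work has been done in Proposition~\ref{p:Fk-bound2} (which required Lemmas~\ref{l:Fkell-bound2} and~\ref{l:Fkl-bound3} together with the thin/flat bounds $t(x), t^*(x)\le t$) and in the derivation of Proposition~\ref{p:cpc+eps0} from the degenerate Favard-type inequality. The only mild point to verify is that $\alpha < 1$ so that the square root inequality is applicable; this holds because otherwise we would need $\aF(k+1,\ell)^2 = \aF(k,\ell)\,\aF(k+2,\ell)$ which, combined with the assumption $\aF(k,\ell+2)=0$, would force $\aF(k+1,\ell+1) = 0$ via the proof of Proposition~\ref{p:cpc+eps0}, contradicting our reduction. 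The resulting inequality is the desired one.
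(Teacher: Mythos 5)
Your proposal is correct and matches the paper's proof, which likewise just reruns the argument of Theorem~\ref{t:main-part2} with Proposition~\ref{p:Fk-bound2} substituted for Proposition~\ref{p:Fk-bound1}. (Your worry about $\alpha<1$ is harmless but unnecessary, since $\tfrac{1}{1+\sqrt{1-\alpha}}\ge\tfrac12+\tfrac{\alpha}{8}$ also holds at $\alpha=1$.)
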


\smallskip

\begin{proof}
	The proof follows the same argument as in Theorem~\ref{t:main-part2}, where Proposition~\ref{p:Fk-bound2} is used in place of Proposition~\ref{p:Fk-bound1}.
\end{proof}

\medskip

\subsection{Putting everything together} \label{ss:vanish-proofs}
We can now combine the results to finish the proofs.

\smallskip

\begin{proof}[Proof of Main Theorem~\ref{t:main}]
The first inequality~\eqref{eq:main-thm-1} follows immediately from
Theorem~\ref{t:main-part1}.  The second inequality~\eqref{eq:main-thm-2} follows
immediately from Theorem~\ref{t:main-part2}.  The third inequality
\eqref{eq:main-thm-2-swap} follows by the symmetry \. $P \lra P^\ast$,
\. $z_1 \lra z_3$ \. and \.  $k\lra \ell$. Finally, the
equality~\eqref{eq:main-thm-3} is the equality in
Lemma~\ref{lem:strict-CPC-zero}.
\end{proof}

\smallskip

\begin{proof}[Proof of Theorem~\ref{t:thin}]
The proof of~\eqref{eq:thin-thm} follows the previous proof.
The result is trivial in the case \. $\aF(k,\ell) \. \aF(k+1,\ell+1) = 0$.
In the vanishing case \. $\aF(k,\ell+2) = \aF(k+2,\ell)=0$ \. and \.
$\aF(k,\ell) \. \aF(k+1,\ell+1) > 0$ \. the result follows from the equality in
Lemma~\ref{lem:strict-CPC-zero}.  In the case when only one of the terms
is vanishing: \. $\aF(k,\ell+2)=0$ \. and \. $\aF(k+2,\ell) > 0$, the result
is given by Theorem~\ref{t:thin-part2}. The case \. $\aF(k+2,\ell)=0$ \.
and \. $\aF(k,\ell+2) > 0$ \. follows via poset duality as in the proof above.
Finally, the nonvanishing case \. $\aF(k,\ell+2)=\aF(k+2,\ell)>0$ \.
is given by the second inequality in Theorem~\ref{t:thin-part1}.
\end{proof}

\smallskip

\begin{proof}[Proof of Theorem~\ref{t:main-converse}]
Lemma~\ref{l:Fkell-bound2} combined with~\eqref{eq:Fk-bound1}, gives
$$\frac{\aF(k+1,\ell)}{\aF(k,\ell) } \, \leq \, 2k\ell.$$
Similarly, Lemma~\ref{l:Fkl-bound3} for \. $k'=k-1$ \. and \. $\ell'=\ell+1$,
combined with~\eqref{eq:Fk-bound2}, gives:
$$
\frac{\aF(k,\ell+1)}{\aF(k+1,\ell+1)} \, = \, \frac{\aF(k'+1,\ell')}{\aF(k'+2,\ell')} \, \leq \, n \ell' \, = \, n(\ell+1).
$$
Multiplying these inequalities, we obtain the first term in the minimum of
the desired upper bound. Via poset duality,
see the proof of Theorem~\ref{t:main} above, we can exchange the $k$ and $\ell$ terms and obtain the other inequality.
\end{proof}

\bigskip

\section{Examples and counterexamples}\label{s:explicit}

\subsection{Inequalities \eqref{eq:CPC-2} and \eqref{eq:CPC-3}} \label{ss:explicit-CPC23}
Recall that by Theorem~\ref{t:CPC-two-three} at least one of these two inequalities
must hold.  We now show that for some posets \eqref{eq:CPC-3} does not hold.  By the
poset duality, the inequality \eqref{eq:CPC-2} also does not hold.

\smallskip

\begin{prop}
 \label{p:CPC3-false}
The inequality \eqref{eq:CPC-3} fails for an infinite family of posets of width three.
\end{prop}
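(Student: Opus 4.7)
The plan is to exhibit an explicit one-parameter family $\{P_n\}_{n \geq 1}$ of posets of width three, together with distinguished elements $z_1 \prec z_2 \prec z_3$ and integer pairs $(k_n,\ell_n)$, for which the inequality \eqref{eq:CPC-3} is strictly violated. My first step would be to search for a small base poset $P_0$ of width three in which \eqref{eq:CPC-3} already fails at some~$(k_0,\ell_0)$. A natural construction takes the chain $z_1 \prec z_2 \prec z_3$ together with two short disjoint chains $a_1 \prec \cdots \prec a_r$ and $c_1 \prec \cdots \prec c_s$ attached to $z_1, z_2, z_3$ through a single asymmetric relation on each side (for instance $a_r \prec z_2$ and $z_1 \prec c_1$, but no other forced comparabilities between the $a_i, c_j$ and the $z_i$), possibly with one extra element incomparable to everything else to guarantee a three-element antichain. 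The deliberate asymmetry between the two chains is what prevents \eqref{eq:CPC-3} from being forced by symmetry, and for small $r, s$ one can verify the violation by direct enumeration of linear extensions at some modest $(k_0, \ell_0)$.

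To turn this single violation into an infinite family, I would lengthen one of the two chains: for instance, replace $a_1 \prec \cdots \prec a_r$ by $a_1 \prec \cdots \prec a_{r+n}$ while keeping the single anchoring relation $a_{r+n} \prec z_2$. Adding elements inside an existing chain preserves width three, and it also preserves the structural asymmetry responsible for the failure of \eqref{eq:CPC-3}. Counting linear extensions of $P_n$ reduces to summing products of multinomial coefficients over configurations of the non-$z_i$ elements across the four intervals determined by $z_1, z_2, z_3$, which is the same type of combinatorial bookkeeping used throughout $\S$\ref{s:vanish} and $\S$\ref{s:words}. As a result, $\aFr(k, \ell)$ becomes an explicit polynomial expression in $n, k, \ell$, and the proposed violation of \eqref{eq:CPC-3} becomes a polynomial inequality in~$n$.

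The main obstacle is to verify that the strict violation persists for every $n \geq 1$, not merely in the limit or at the base case. The cleanest route is to express the ratio of the two sides of \eqref{eq:CPC-3} as a rational function in $n$, exhibit its value at the base case as being strictly larger than $1$, and then argue monotonicity in $n$ (or equivalently check the sign of the leading coefficient of the difference of the two sides after clearing denominators) in order to conclude that the inequality fails for all~$n$. An alternative route is to first construct an infinite family of violations to \eqref{eq:CPC-2} and then dualize via the map $P \leftrightarrow P^*$, $z_1 \leftrightarrow z_3$, $k \leftrightarrow \ell$ explained in $\S$\ref{ss:CPC-other}; since duality preserves width, this converts any width-three family of CPC-2 counterexamples into a width-three family of CPC-3 counterexamples, giving flexibility to work with whichever of \eqref{eq:CPC-2} or \eqref{eq:CPC-3} admits the simpler closed-form count.
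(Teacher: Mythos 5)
There is a genuine gap: for a statement of this kind the counterexample \emph{is} the proof, and your proposal never actually produces one. Every substantive step is deferred to an unperformed computation --- ``for small $r,s$ one can verify the violation by direct enumeration,'' ``the proposed violation \ldots becomes a polynomial inequality in $n$,'' ``argue monotonicity in $n$.'' Nothing in the sketch establishes that the particular poset you describe (two chains anchored by $a_r \prec z_2$ and $z_1 \prec c_1$, plus an extra incomparable element) violates \eqref{eq:CPC-3} at any $(k_0,\ell_0)$; asymmetry alone does not force a violation, and the inequality holds for wide classes of posets (e.g.\ all posets of width two, by Corollary~\ref{c:CPC23-width-two}), so the choice of gadget genuinely matters. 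Your fallback --- build \eqref{eq:CPC-2} counterexamples first and dualize --- is a correct reduction but merely relocates the same unperformed construction.

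For comparison, the paper's proof takes the spine \. $z_1 \prec x_1 \prec \cdots \prec x_{k-1} \prec z_2 \prec y_1 \prec \cdots \prec y_{\ell-2} \prec z_3$ \. together with three extra elements subject to \. $x_{k-1}\prec u \prec y_1$ \. and \. $v,w \succ z_2$, and computes all four terms exactly: \. $\aFr(k,\ell+2)=(\ell+1)\ell$, \. $\aFr(k+1,\ell)=2(\ell-1)$, \. $\aFr(k,\ell+1)=2\ell$, \. $\aFr(k+1,\ell+1)=\ell(\ell-1)$, \. whence the ratio of the right side of \eqref{eq:CPC-3} to the left side equals \. $\ell/(\ell+1)<1$ \. for every \. $k\ge1$, $\ell\ge2$. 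Because each member of the family is verified by exact enumeration, no base-case-plus-monotonicity or asymptotic argument is needed --- which is also why your proposed ``rational function in $n$, check the leading coefficient'' step, while plausible in principle, adds an unnecessary layer of unverified analysis. To repair your proposal you would need to commit to a concrete poset, carry out the enumeration of the four quantities, and exhibit the strict inequality in closed form for all members of the family.
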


%\smallskip

\begin{proof}
 Fix \. $k \geq 1$ \. and \. $\ell \geq 2$, and let \. $P:=(X,\prec)$ \. be the poset given by
\begin{equation*}
	\begin{split}
		& X \ := \ \{x_1,\ldots, x_{k-1}\} \ \sqcup \ \{y_1,\ldots, y_{\ell-2}\} \  \sqcup \  \{z_1, z_2, z_3 \} \ \  \sqcup \ \ \{u,v,w\}\., \\
		& z_1 \. \prec \. x_1 \. \prec \. x_2 \. \prec \. \cdots \. \prec \.  x_{k-1} \. \prec \. z_2 \. \prec \. y_1 \. \prec \. y_2  \. \prec \. \cdots \. \prec \. y_{\ell-2} \. \prec \. z_3\.,\\
		& x_{k-1} \. \prec u \. \prec \. y_1\.,  \ \ v \. \succ \. z_2\., \ \ w \. \succ \. z_2\..
	\end{split}
\end{equation*}
Note that this is a poset of width three.  Let us now compute all four terms in~\eqref{eq:CPC-3}:

\smallskip

First, observe that \. $L\in \cF(k,\ell+2)$ \. if and only if
\. $L(z_2) < L(u) < L(y_1)$ \. and \. $L(v), L(w)<L(z_3)$.  Thus, there is a bijection
between these linear extensions and the pairs \. $(i,j)$ \. satisfying \.
$1 \leq i \neq j \leq \ell+1$, through the map \. $L \mapsto \big(L(v)-L(z_2), L(w)-L(z_2)\big)$.
Therefore, we have \. $\aF(k,\ell+2)=(\ell+1)\ell$.

Second, observe that \. $L\in \cF(k+1,\ell)$ \. if and only if \. $L(x_{k-1}) < L(u)<L(z_2) $,
and either \. $L(v)< L(z_3) <L(w)$ \. or \. $L(w)< L(z_3) <L(v)$.
Note that there is a bijection between those linear extensions satisfying
\. $L(v)< L(z_3) <L(w)$ \. and the integers in $[1,\ell-1]$, through the map
$L\mapsto L(v)-L(z_2)$.  Therefore, we have \. $\aF(k+1,\ell)=2(\ell-1)$.

Third,  observe that \. $L\in \cF(k,\ell+1)$  \. if and only if \.  $L(z_2) < L(u) < L(y_1)$,
and either \. $L(v)< L(z_3) <L(w)$ \. or \. $L(w)< L(z_3) <L(v)$.
Note that there is a bijection between those linear extensions satisfying
\. $L(v)< L(z_3) <L(w)$ \. and the integers in $[1,\ell]$, through the map
$L\mapsto L(v)-L(z_2)$. Therefore, we have \. $\aF(k,\ell+1)=2\ell$.

Fourth,  observe that \. $L\in \cF(k+1,\ell+1)$  \. if and only if \. $L(x_{k-1}) < L(u)<L(z_2)$ \. and  \. $L(v),L(w)<L(z_3)$.
Note that there is a bijection between these linear extensions  and pairs \. $(i,j)$ \. satisfying \.
$1 \leq i \neq j \leq \ell$. Therefore, we have \. $\aF(k+1,\ell+1)=\ell (\ell-1)$.

\smallskip

Combining these observations, we obtain:
\[ \frac{\aF(k,\ell+1) \. \aF(k+1,\ell+1)}{\aF({k,\ell+2}) \. \aF(k+1,\ell)} \ = \ \frac{\ell}{\ell+1} \ < \ 1\ts.    \]
This contradicts \eqref{eq:CPC-3}, as desired.
\end{proof}

\subsection{Counterexamples to the generalized CPC} \label{ss:explicit-gen-CPC}
We now show that the examples in proof of Proposition~\ref{prop:imply} are also
counterexamples to Conjecture~\ref{conj:GCPC}, thus proving Theorem~\ref{t:GCPC-false}.

\smallskip

\begin{prop} \label{prop:imply}
Inequality \eqref{eq:GCPC} implies \eqref{eq:CPC-3}.
\end{prop}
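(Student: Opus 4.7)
The plan is to establish \eqref{eq:CPC-3} from \eqref{eq:GCPC} by contrapositive, guided by the preceding remark that the examples appearing in this proof simultaneously serve as counterexamples to \eqref{eq:GCPC}. Concretely, starting from any data $(P, z_1, z_2, z_3, k, \ell)$ for which \eqref{eq:CPC-3} fails --- such as the width-three family of Proposition~\ref{p:CPC3-false} --- I would construct a derived poset $P'$ (with the same designated triple) and choose a 4-tuple of parameters $(a, b, c, d)$ for which \eqref{eq:GCPC} is non-trivially violated.

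The first step is a ratio reformulation of both inequalities. Setting $R_k(m) := \aFr(k, m+1)/\aFr(k, m)$, the inequality \eqref{eq:GCPC} is equivalent to $R_k(m)$ being non-increasing in $k$ for each fixed $m$, and via the column log-concavity \eqref{eq:LogC-2} of Lemma~\ref{lem:logC}, $R_k(m)$ is also non-increasing in $m$ for each fixed $k$. In this language \eqref{eq:CPC-3} rearranges to $R_k(\ell+1) \le R_{k+1}(\ell)$, which compares $R$ along the anti-diagonal move $(+1,-1)$. Neither $k$- nor $m$-monotonicity alone controls this comparison, so a purely algebraic derivation from \eqref{eq:GCPC} and \eqref{eq:LogC-2} appears insufficient, and the construction of $P'$ becomes essential.

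The second step constructs $P'$ by adjoining auxiliary elements to $P$ in a way that opens up the vanishing $\aFr$-values of the Proposition~\ref{p:CPC3-false} family. Concretely, one would add a chain (or antichain) of new elements, some above $z_3$ and some incomparable to it, so that in the enlarged poset $P'$ the value $\aFr(k+1, \ell+2)$ becomes positive while the five values $\aFr(k, \ell), \aFr(k, \ell+1), \aFr(k, \ell+2), \aFr(k+1, \ell), \aFr(k+1, \ell+1)$ inherit the \eqref{eq:CPC-3}-failing ratio of $P$ up to a common multiplicative correction. The target is a 4-tuple such as $(a, b, c, d) = (k, \ell+1, k+1, \ell+2)$ for which the \eqref{eq:GCPC} instance was trivial in $P$ (because one of the factors vanished) but becomes a non-trivial test in $P'$.

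The main obstacle will be quantitative calibration. The simplest enlargements --- duplicating the antichain $\{v, w\}$ of Proposition~\ref{p:CPC3-false}, adjoining a free antichain element, or lifting by a chain above $z_3$ --- can be shown to preserve \eqref{eq:GCPC} on every testable 4-tuple by direct computation of $\aFr_{P'}$ via a recursion in $\aFr_P$ at shifted parameters. The correct construction must therefore carefully balance the magnitude of the new $\aFr_{P'}(k+1, \ell+2)$ against $\aFr_{P'}(k, \ell+2)$ and $\aFr_{P'}(k+1, \ell+1)$ to push the corresponding GCPC 4-tuple past the critical threshold; this is where I expect the hardest work, and where the bookkeeping via the mixed-volume / order-polytope interpretation of Section~\ref{sec:poset} should be most useful, since it makes the exact ratio of modified counts transparent as a ratio of mixed volumes.
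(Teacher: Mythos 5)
Your proposal has a genuine gap: the whole argument rests on constructing a modified poset $P'$ whose extension counts are ``calibrated'' so that \eqref{eq:GCPC} fails at a positive-parameter $4$-tuple, but that construction is never specified, and you yourself note that the natural candidates (duplicating the antichain $\{v,w\}$, adjoining free elements, lifting by a chain above $z_3$) all preserve \eqref{eq:GCPC}. What remains is a plan whose hardest step is acknowledged but not carried out, so it does not establish the implication. Your ratio analysis is correct as far as it goes --- with $R_k(m):=\aF(k,m+1)/\aF(k,m)$, \eqref{eq:GCPC} says $R_k$ is non-increasing in $k$, \eqref{eq:LogC-2} says it is non-increasing in $m$, and \eqref{eq:CPC-3} is the anti-diagonal comparison $R_k(\ell+1)\le R_{k+1}(\ell)$, which neither monotonicity yields --- but the correct conclusion to draw from this is not that a new poset is needed.

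The idea you are missing is that \eqref{eq:CPC-3} for $(P,z_1,z_2,z_3,k,\ell)$ is \emph{literally} an instance of \eqref{eq:GCPC} for the \emph{same} poset after swapping the roles of $z_1$ and $z_2$. Set $z_1':=z_2$, $z_2':=z_1$, $z_3':=z_3$ and write $\aF'$ for the counts with respect to the new triple; then $\aF'(a,b)=\aF(-a,a+b)$ for all integers $a,b$. Taking $a:=-k-1$ and $b:=k+\ell+1$ gives $\aF'(a,b)=\aF(k+1,\ell)$, \ $\aF'(a+1,b+1)=\aF(k,\ell+2)$, \ $\aF'(a,b+1)=\aF(k+1,\ell+1)$, \ $\aF'(a+1,b)=\aF(k,\ell+1)$, so \eqref{eq:CPC-3} becomes $\aF'(a,b)\,\aF'(a+1,b+1)\le \aF'(a+1,b)\,\aF'(a,b+1)$, which is the case $(k,\ell,p,q)=(a,b,a+1,b+1)$ of \eqref{eq:GCPC}. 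Hence a counterexample to \eqref{eq:CPC-3} is already a counterexample to \eqref{eq:GCPC}, with no modification of the poset. The point --- flagged in $\S$\ref{ss:CPC-other} --- is that $a<0$ here: the relevant \eqref{eq:GCPC} instances live at negative first parameter, which is exactly why your search in the positive quadrant could not close the gap, and why \eqref{eq:GCPC}, quantified over all $k\le p$ and $\ell\le q$, is strictly stronger than its positive-parameter restriction.
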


%\smallskip
%We now show that counterexample to \eqref{eq:CPC-3} also gives a counterexample to Conjecture~\ref{conj:CP-gen-even-more}.

\begin{proof}
Suppose \eqref{eq:CPC-3} fails for a poset \. $P=(X,\prec)$, elements \. $z_1, z_2, z_3\in X$, and integers \.
$k,\ell \ge 1$.

Let \. $z_1':=z_2$, \. $z_2':=z_1$, and \. $z_3':=z_3$.  To avoid the clash of notation, let
\. $\aF'(k,\ell)$ \. be defined by
\[ \aF'(k,\ell) \ := \ \big|\{ \ts L \in \Ec(P) \ : \ L(z_2')-L(z_1')=k, \, \. L(z_3') -L(z_2') = \ell \. \}\big|.  \]
By definition, we have
\[ \aF'(a,b) \ = \  \aF(-a,a+b). \]
Now let \. $a:=-k-1$ \. and \. $b:=\ell+k+1$.  Note aside that \. $a<0$ \. for all \. $k >0$. It then follows that
\begin{align*}
	\aF'(a,b) \ &= \ \aF(-a,a+b) \ = \ \aF(k+1,\ell), \\
		\aF'(a+1,b+1) \ &= \ \aF(-a-1,a+b+2) \ = \ \aF(k,\ell+2), \\
			\aF'(a,b+1) \ &= \ \aF(-a,a+b+1) \ = \ \aF(k+1,\ell+1), \\
					\aF'(a+1,b) \ &= \ \aF(-a-1,a+b+1) \ = \ \aF(k,\ell+1).
\end{align*}
In the new notation, the inequality \eqref{eq:CPC-3} is equivalent to
\[  \aF'(a,b) \. \aF'(a+1,b+1) \  \leq \ \aF'(a,b+1) \. \aF'(a+1,b), \]
and note that \.$a<0$, $b>0$ \. whenever \. $k,\ell >0$.
This shows that a counterexample for \eqref{eq:CPC-3} is also a counterexample to \eqref{eq:GCPC}.
\end{proof}

\smallskip

\begin{cor} \label{c:CPC23-width-two}
Inequalities \eqref{eq:CPC-2} and \eqref{eq:CPC-3} hold for posets of width two.
\end{cor}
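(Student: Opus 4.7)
The plan is to chain together three results already available in the paper and its references. The bulk of the work is done by \cite[Thm.~3.3]{CPP1}, which establishes the generalized cross-product conjecture \eqref{eq:GCPC} for posets of width two. So first I would apply that theorem to conclude that \eqref{eq:GCPC} holds for any poset $P$ of width two, any triple $z_1, z_2, z_3$, and any exponents $k \le p$, $\ell \le q$.

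Next, I would invoke Proposition~\ref{prop:imply} directly: it shows that \eqref{eq:GCPC} implies \eqref{eq:CPC-3} on any poset where the latter is to be checked. Since the substitution used in the proof of Proposition~\ref{prop:imply} only reindexes and relabels $z_1, z_2, z_3$, it does not change the underlying poset, hence the width-two hypothesis is preserved. This immediately yields \eqref{eq:CPC-3} for posets of width two.

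For \eqref{eq:CPC-2}, I would appeal to the duality observation made in $\S$\ref{ss:CPC-other}: the inequality \eqref{eq:CPC-2} for $(P, z_1, z_2, z_3, k, \ell)$ is equivalent to \eqref{eq:CPC-3} for $(P^\ast, z_3, z_2, z_1, \ell, k)$, via the order-reversing bijection on linear extensions that identifies $\aF_{P,z_1,z_2,z_3}(k,\ell) = \aF_{P^\ast, z_3, z_2, z_1}(\ell, k)$. Since the dual of a width-two poset is again a width-two poset, the already-established \eqref{eq:CPC-3} for $P^\ast$ transports back to \eqref{eq:CPC-2} for $P$. Both inequalities then hold and the corollary follows.

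There is no real obstacle here: everything reduces to bookkeeping about which variant of the cross-product inequality follows from which specialization of \eqref{eq:GCPC}, and to checking that width two is preserved under the two operations (relabeling and taking duals) that appear. The only mild point to verify carefully is that the substitution $a = -k-1$, $b = \ell + k + 1$ used in Proposition~\ref{prop:imply} lands in a regime where \eqref{eq:GCPC} is applicable, i.e.\ that the required monotonicity $a \le a+1$, $b \le b+1$ of the indices holds; this is automatic.
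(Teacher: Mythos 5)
Your proposal is correct and is essentially the paper's own argument: the paper derives the corollary from \cite[Thm.~3.3]{CPP1} (which gives \eqref{eq:GCPC} for width two) together with Proposition~\ref{prop:imply}, with the case of \eqref{eq:CPC-2} handled by the poset-duality observation from $\S$\ref{ss:CPC-other}, exactly as you do. You have merely spelled out the bookkeeping (preservation of width two under relabeling and dualization, and the applicability of \eqref{eq:GCPC} at the shifted indices) that the paper leaves implicit.
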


\smallskip

This follows from Proposition~\ref{prop:imply} and Theorem~3.3 in~\cite{CPP1} which proves
\eqref{eq:GCPC} for posets of width two.

\smallskip

\subsection{Stanley ratio}\label{ss:explicit-Stanley}
It follows from Corollary~\ref{cor:Sta-upb}, the following bound on the
\defn{Stanley ratio}:
\begin{equation}\label{eq:Stanley-upper}
\frac{\aN_k^2}{\aN_{k-1}\. \aN_{k+1} } \ \leq \ {(k-1)(n-k)},
\end{equation}
whenever the LHS is well defined.  The following example shows that
both the inequality~\eqref{eq:Stanley-upper} and Corollary~\ref{cor:Sta-upb}
are tight.

\smallskip

%\begin{ex} \label{ex:Stanley-up}
In the notation of~$\S$\ref{ss:words-single}, fix \. $1\le k \le n$.
Let \. $P_k:=(X,\prec)$ \. be the width two poset given by
\begin{equation*}%\label{eq:Sta-up-ex}
	\begin{split}
		& X \ := \ \{x_1,\ldots, x_{k-2}\} \ \sqcup \ \{y_1,\ldots, y_{n-k-1}\} \  \sqcup \  \{a, v, w \}, \\
		& x_1 \. \prec \. x_2 \. \prec \. \cdots \. \prec \.  x_{k-2} \. \prec \. a \. \prec \. y_1 \. \prec \. y_2  \. \prec \. \cdots \. \prec \. y_{n-k-1},\\
		& v \. \prec \. y_1\ts,  \ \ w \. \succ \. x_1\ts, \ \ v \. \prec \. w.
	\end{split}
\end{equation*}

\smallskip

\begin{prop} \label{prop:Stanley-up}
For posets \. $P_k$ \. defined above  the inequality \eqref{eq:Stanley-upper} is an equality. 
\end{prop}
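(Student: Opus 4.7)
The plan is to evaluate the three counts $\aN_{k-1}$, $\aN_k$, and $\aN_{k+1}$ by direct enumeration, after which the identity in~\eqref{eq:Stanley-upper} reduces to a routine algebraic check. The structure of $P_k$ makes each count tractable: every linear extension of $P_k$ extends the main chain $x_1 \prec \cdots \prec x_{k-2} \prec a \prec y_1 \prec \cdots \prec y_{n-k-1}$, and is determined by the positions of just two free elements $v$ and $w$, subject to $v \prec y_1$, $x_1 \prec w$, and $v \prec w$. Since the number of elements of $\{v, w\}$ appearing before $a$ equals $L(a)-(k-1)$, each of the three values $L(a) = k-1, k, k+1$ gives rise to an independent counting problem, which I would handle in turn.

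First I would compute $\aN_{k-1}$: both $v$ and $w$ follow $a$, and the relation $v \prec y_1$ together with $y_1$ being the immediate successor of $a$ in the chain forces $v$ into the slot directly after $a$, after which $w$ may be inserted freely into any of the $n-k$ slots among $y_1 \prec \cdots \prec y_{n-k-1}$. Next, for $\aN_k$ exactly one of $\{v, w\}$ precedes $a$, and the subcase with $w$ preceding $a$ is ruled out by $v \prec w$; in the remaining subcase $v$ can be placed into the $x$-chain and $w$ into the $y$-chain independently, giving the product $(k-1)(n-k)$. Finally, for $\aN_{k+1}$ both of $v, w$ precede $a$, and one counts orderings of the initial segment $\{v, w, x_1, \ldots, x_{k-2}\}$ respecting the $x$-chain together with $v \prec w$ and $x_1 \prec w$; this is handled by a short case analysis on the position of $w$ within the prefix.

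With the three counts in hand, substituting them into the ratio $\aN_k^2 / (\aN_{k-1}\, \aN_{k+1})$ and simplifying yields $(k-1)(n-k)$, which is exactly the equality in~\eqref{eq:Stanley-upper}. The only step requiring any care is the enumeration for $\aN_{k+1}$, where the constraints $v \prec w$ and $x_1 \prec w$ are entangled and must be disentangled carefully; the other two cases decouple cleanly into one-dimensional chain-insertion problems whose counts are immediate.
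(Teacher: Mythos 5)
Your overall strategy is the same as the paper's: enumerate $\aN_{k-1}$, $\aN_k$, $\aN_{k+1}$ directly and substitute. Your first two counts are right, and your reasoning for them matches the paper's ($\aN_{k-1}=n-k$ because $v$ is forced into position $k$ and $w$ floats among the $y$'s; $\aN_k=(k-1)(n-k)$ because the relation $v\prec w$ kills the subcase with $w$ in the prefix).

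The gap is exactly at the step you defer. For $\aN_{k+1}$ you propose to count shuffles of $\{v,w\}$ into the chain $x_1\prec\cdots\prec x_{k-2}$ subject to $v\prec w$ and $x_1\prec w$, and you assert without computing it that the resulting ratio simplifies to $(k-1)(n-k)$. It does not: with only $x_1\prec w$ and $v\prec w$, the element $w$ can sit anywhere after $x_1$ and $v$, and the count is $\binom{k}{2}-1$ (all $\binom{k}{2}$ placements of $v$ before $w$, minus the single placement with both of them ahead of $x_1$), e.g.\ $5$ rather than $3$ when $k=4$. That would give $\aN_k^2/(\aN_{k-1}\aN_{k+1})=2(k-1)(n-k)/(k+2)$, so the claimed equality fails. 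What the paper actually uses is that in $\Nc_{k+1}$ the element $w$ is \emph{forced} into position $k$, which requires $w$ to lie above the entire $x$-chain, i.e.\ the relation must be read as $w\succ x_{k-2}$ (consistent with the analogous poset $P_{k,\ell}$ of $\S$\ref{ss:explicit-converse}, where $w\succ b_{\ell-1}$ is the top of the chain); the displayed "$w\succ x_1$" is a typo. With that reading, $w$ dominates $x_1,\ldots,x_{k-2}$ and $v$, so $L(w)=k$, only $v$ is free among $k-1$ slots, and $\aN_{k+1}=k-1$, which is what makes the ratio collapse to $(k-1)(n-k)$. You need to carry out this count explicitly and with the correct covering relation; as written, your "short case analysis" would produce a number that refutes rather than proves the proposition.
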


\begin{proof}
Note that for all linear extensions \. $L \in \Nc_{k-1}$\ts, we have \. $L(a) < L(v)=k < L(w)$,
where \. $k+1 \le L(w) \le n$.
Similarly, for all linear extensions \. $L \in \Nc_{k}$\ts, we have \. $L(v) <L(a) < L(w)$,
where \. $1 \le L(v) \le k-1$ \. and \. $k+1\le L(w) \le n$.  Finally,
for all linear extensions \. $L \in \Nc_{k+1}$\ts, we have \.  $L(v)< L(w)=k < L(a)$, where
\. $1 \le L(v) \le k-1$.  These three observation imply thati
\begin{equation*}\label{eq:Sta-up-calc}
	\aN_{k-1} \ = \  n-k, \qquad \aN_{k} \ = \  (k-1) (n-k), \qquad \aN_{k+1} \ = \  k-1.
\end{equation*}
Thus, for posets~$P_k$ 
the inequality~\eqref{eq:Stanley-upper} is an equality.
\end{proof}

%\end{ex}

\smallskip

\subsection{Converse cross product ratio}\label{ss:explicit-converse}
The following example shows that Theorem~\ref{t:main-converse}
is essentially tight, up to a multiplicative factor of \ts $2\ell$.
%
%\begin{ex}\label{ex:converse}
Fix \. $k \geq 2$,  \. $\ell \geq 1$, and denote \ts $m:=n-k-\ell-3$.
Let \. $P_{k,\ell}:=(X,\prec)$ \. be the poset given by
\begin{equation*}
	\begin{split}
		& X \ := \ \{a_1,\ldots, a_{k-2}\} \ \sqcup \  \{b_1,\ldots, b_{\ell-1}\} \  \sqcup \ \{c_1,\ldots, c_{m}\} \ \sqcup \  \{z_1, z_2, z_3 \} \   \sqcup  \ \{u,v,w\}, \\
		& z_1 \. \prec \. a_1 \.  \prec \. \cdots \. \prec \.  a_{k-2} \. \prec \. z_2 \. \prec \. b_1  \. \prec \. \cdots \. \prec \. b_{\ell-1} \. \prec \. z_3 \. \prec \. c_1 \. \prec \. \cdots \. \prec \. c_m,\\
		& u \. \prec  \.  z_2\ts,  \ \ a_{k-2} \. \prec \. v \. \prec \. z_3\ts, \ \ w \. \succ \. b_{\ell-1}\ts, \ \ u \. \prec v \. \prec \. w.
	\end{split}
\end{equation*}

\smallskip

\begin{prop}\label{prop:converse}
Fix \. $k \geq 2$,  \. $\ell \geq 1$.  For posets \. $P_{k,\ell}$ \. defined above, we have: 
\begin{equation}\label{eq:sta-converse}
 \frac{\aF(k,\ell+1) \. \aF(k+1,\ell)}{\aF(k,\ell) \. \aF(k+1,\ell+1)}
\ = \ k\ell n \big(1+o(1)\big)  \ \  \text{as \ $n\to \infty$.}
\end{equation} \end{prop}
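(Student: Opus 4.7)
The plan is a direct combinatorial computation for the specific poset $P_{k,\ell}$. Every linear extension is determined by specifying, for each of the three "free" elements $u$, $v$, $w$, the slot of the main chain $z_1 \prec a_1 \prec \cdots \prec a_{k-2} \prec z_2 \prec b_1 \prec \cdots \prec b_{\ell-1} \prec z_3 \prec c_1 \prec \cdots \prec c_m$ into which it is inserted, subject to the chain constraints $u \prec z_2$, $a_{k-2} \prec v \prec z_3$, $b_{\ell-1} \prec w$, and $u \prec v \prec w$.

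To set up the enumeration I would introduce the indicators $\delta_u := [L(u) > L(z_1)]$, $\delta_v^- := [L(v) < L(z_2)]$, and $\delta_w := [L(w) < L(z_3)]$. Since $u$ lies below $z_2$ and $w$ lies above $z_2$ in every linear extension, counting elements in the intervals $(L(z_1), L(z_2)]$ and $(L(z_2), L(z_3)]$ yields
$$L(z_2) - L(z_1) \ = \ (k - 1) + \delta_u + \delta_v^-, \qquad L(z_3) - L(z_2) \ = \ \ell + (1 - \delta_v^-) + \delta_w.$$
Each condition $L(z_2) - L(z_1) = p$, $L(z_3) - L(z_2) = q$, for $(p,q) \in \{k, k+1\} \times \{\ell, \ell+1\}$, thus pins down the indicator profile, which in turn fixes the broad region along the chain where each of $u$, $v$, $w$ must lie.

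For each of the four pairs I would count the remaining combinatorial freedom: a forced element contributes a factor of $1$; $u$ free among the $k-1$ gaps between $z_1$ and $z_2$ (respecting $u \prec v$ if $v$ also lies in that region) contributes $k-1$; $v$ free among the $\ell$ gaps between $z_2$ and $z_3$ contributes $\ell$; and $w$ free among the $m+1$ gaps after $z_3$ contributes $m+1$. The enumeration yields
$$\aF(k, \ell) \ = \ m + 1, \quad \aF(k+1, \ell) \ = \ (k-1)(m+1), \quad \aF(k+1, \ell+1) \ = \ k - 1,$$
while $\aF(k, \ell+1)$ receives contributions from two distinct indicator profiles --- a bulk term $(k-1)\ell(m+1)$ with all three of $u, v, w$ enjoying full freedom within their regions, plus an isolated \emph{tight} configuration with $u$ before $z_1$ and both $v, w$ squeezed into their only allowed slots --- giving $\aF(k, \ell+1) = (k-1)\ell(m+1) + 1$.

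Substituting into the cross product ratio, the factors of $(k-1)$ and $(m+1)$ cancel cleanly against $\aF(k,\ell)\aF(k+1,\ell+1)$ in the denominator, so the ratio simplifies to $(k-1)\ell(m+1) + 1$, and since $n = m + k + \ell + 3$ this yields the claimed asymptotic of order $k\ell n$ as $n \to \infty$. The main obstacle is purely the bookkeeping: enumerating the correct indicator profiles for each of the four $\aF$ values (in particular not missing the isolated tight configuration contributing $+1$ to $\aF(k,\ell+1)$), and properly accounting for the constraint $u \prec v$ when both elements lie in the slot between $a_{k-2}$ and $z_2$.
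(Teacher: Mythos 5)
Your computation is correct and matches the paper's proof in both method and outcome: the paper likewise enumerates the linear extensions of $P_{k,\ell}$ directly, determines in each of the four cases whether $u$, $v$, $w$ are forced or free, and arrives at exactly the counts $m+1$, $(k-1)(m+1)$, $(k-1)\ell(m+1)+1$ (including the isolated extra extension), and $k-1$. Your indicator-variable bookkeeping is just a cleaner packaging of the same case analysis, so this is essentially the paper's argument.
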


%\smallskip

\begin{proof}
Note that for every linear extension \. $L\in \Ec(P_{k,\ell})$,  we have: 
\begin{align}\label{eq:dam-1}
	L(z_2)-L(z_1) \  &\geq \ |B(z_1,z_2) \ts - \ts z_1| \ = \   |\{a_1,\ldots, a_{k-2}, z_2\}| \ = \ k-1,\\
	\label{eq:dam-2}	L(z_3)-L(z_2) \  &\geq  \ |B(z_2,z_3) \ts - \ts z_2| \ =  \ |\{b_1,\ldots, b_{\ell-1}, z_3\}| \ = \ \ell.
\end{align}
Note also that
\begin{align}
	\label{eq:dam-3}
 \text{either} \quad L(u)=1  \quad &\text{or} \quad L(z_1) < L(u)< L(z_2),\\
\label{eq:dam-4}		\text{either} \quad L(v)=L(z_2)-1  \quad & \text{or} \quad L(z_2) < L(v)< L(z_3),\\
\label{eq:dam-5}		\text{either} \quad L(w)= L(z_3)-1  \quad & \text{or} \quad L(w) > L(z_3).
\end{align}

We now compute the cross--product ratio of \. $P_{k,\ell}$ \. consider the following four cases.

\smallskip

\nin
\textbf{Case 1.} % We have \. $\aF(k,\ell)=n-k-\ell-2$ \. by the following argument.
 Let  $L \in \cF(k,\ell)$.
Since $L(z_3)-L(z_2)=\ell$, it then follows from \eqref{eq:dam-2} that both  \. $L(v)$ \. and \. $L(w)$ are not contained in the interval \. $[L(z_2),L(z_3)]$\..
It then follows from \eqref{eq:dam-4} and \eqref{eq:dam-5} that
\.  $L(v)= L(z_2)-1$ \.  and \.  $L(w)>L(z_3)$\., respectively.
Now, since \. $L(z_2)-L(z_1)=k$ \. and \. $L(v)\in \big[L(z_1),L(z_2)\big]$\., it then follows from \eqref{eq:dam-1} that \. $L(u)$ \. is not contained in the interval \. $[L(z_1),L(z_2)]$\..
It then follows from \eqref{eq:dam-3} that
\. $L(u)=1$\, which in turn implies that $L(z_1)=2$.
We conclude that \. $L \in \cF(k,\ell)$ \. satisfy:
\begin{alignat*}{3}
&L(z_1)=2, \quad && L(z_2)=k+2, \quad &&L(z_3)=k+\ell+2, \\
& L(u)=1, \quad && L(v)=k+1, \quad  && L(w) \in [k+\ell+3,n].
\end{alignat*}
This implies that \. $\aF(k,\ell)=n-k-\ell-2$\., as desired.

\smallskip

\nin
\textbf{Case 2.} % We have \. $\aF(k+1,\ell)=(k-1) (n-k-\ell-2)$ \. by the following argument.
 Let  $L \in \cF(k+1,\ell)$.
Since $L(z_3)-L(z_2)=\ell$, it then follows from \eqref{eq:dam-2} that both  \. $L(v)$ \. and \. $L(w)$ are not contained in the interval \. $[L(z_2),L(z_3)]$\..
It then follows from \eqref{eq:dam-4} and \eqref{eq:dam-5} that
\.  $L(v)= L(z_2)-1$ \.  and \.  $L(w)>L(z_3)$, respectively.
Now, since \. $L(z_2)-L(z_1)=k+1$ \. and \. $L(v)\in \big[L(z_1),L(z_2)\big]$\., it then follows from \eqref{eq:dam-1} that \. $L(u)$ \. is  contained in the interval \. $[L(z_1),L(z_2)]$\..
It then follows that $L(z_1)=1$.
We conclude that \. $L \in \cF(k+1,\ell)$ \.  satisfy:
\begin{alignat*}{3}
	&L(z_1)=1, \quad && L(z_2)=k+2, \quad &&L(z_3)=k+\ell+2, \\
	& L(u)\in [2,k], \quad && L(v)=k+1, \quad  && L(w) \in [k+\ell+3,n].
\end{alignat*}
This implies that \. $\aF(k+1,\ell)=(k-1)(n-k-\ell-2)$.

\smallskip

\nin
\textbf{Case 3.} We have \.  $\aF(k,\ell+1)=1+ (k-1) \, \ell (n-k-\ell-2)$ \. by the following argument.
 Let  $L \in \cF(k,\ell+1)$.
By \eqref{eq:dam-3} either  \. $L(u)=1$  \. or \. $L(u) \in [L(z_1), L(z_2)]$\..

\smallskip

\nin
\textbf{Case 3.1} Assume that \. $L(u)=1$\.. This implies that $L(z_1)=2$.
Since $L(u) \notin [L(z_1),L(z_2)]$\.,
it then follows from \. $L(z_2)-L(z_1)=k$ \. and \eqref{eq:dam-1} that \. $L(v)$ \. is contained in the interval $[L(z_1), L(z_2)]$.
It then follows from  \eqref{eq:dam-4}
that $L(v)=L(z_2)-1$.
Since \. $L(z_3)-L(z_2)=\ell+1$ \. and \. $L(v)\notin [L(z_2),L(z_3)]$, it then follows from \eqref{eq:dam-2} that
\. $L(w)$ \. is contained in the interval \.  $[L(z_2),L(z_3)]$\..
By \eqref{eq:dam-5}, this implies that \. $L(w)=L(z_3)-1$.
We conclude:
\begin{alignat*}{3}
	&L(z_1)=2, \quad && L(z_2)=k+2, \quad &&L(z_3)=k+\ell+3, \\
	& L(u)=1, \quad && L(v)=k+1, \quad  && L(w) =k+\ell+2.
\end{alignat*}
Thus, there is exactly one such linear extension.

\smallskip

\nin
\textbf{Case 3.2} Assume that \. $L(u) \in [L(z_1), L(z_2)]$\..
This implies that $L(z_1)=1$.
Since \. $L(z_2)-L(z_1)=k$ \. and \. $L(u) \in [L(z_1), L(z_2)]$, it then follows from \eqref{eq:dam-1} that
\. $L(v)$ \. is not contained in the interval \. $[L(z_1), L(z_2)]$.
By \eqref{eq:dam-4}, this implies that \. $L(v)$ \. is contained in the interval
 \. $[L(z_2), L(z_3)]$.
Since  \. $L(z_3)-L(z_2)=\ell+1$\., it then follows from \eqref{eq:dam-2} that $L(w)$
is not contained in the interval
 \. $[L(z_2), L(z_3)]$.
 By \eqref{eq:dam-5}, this implies that $L(w)>L(z_3)$.
 We conclude: 
 \begin{alignat*}{3}
 	&L(z_1)=1, \quad && L(z_2)=k+1, \quad &&L(z_3)=k+\ell+2, \\
 	& L(u)\in[2,k], \quad && L(v)\in [k+2,k+\ell+1], \quad  && L(w) \in [k+\ell+3,n].
 \end{alignat*}
Thus, there are exactly \. $(k-1)\ell(n-k-\ell-2)$ \. such linear extensions. 
%In total, we have \.  $\aF(k,\ell+1)=1+ (k-1) \, \ell (n-k-\ell-2)$, as desired.

\smallskip

\nin
\textbf{Case 4.} %We have $\aF(k+1,\ell+1)=k-1$ by the following argument.
 Let \. $L \in \cF(k+1,\ell+1)$.
 Since \. $L(z_2)-L(z_1)=k+1$, it follows from \eqref{eq:dam-1} 
 that both \. $L(u)$ and $L(v)$ \. are contained in the interval \. $[L(z_1),L(z_2)]$.
 This implies that \. $L(z_1)=1$.
 Since \. $L(v) \in [L(z_1),L(z_2)]$, it then follows from~\eqref{eq:dam-4} that \. $L(v)=L(z_2)-1$.
 Now, since  \. $L(z_3)-L(z_2)=\ell+1$ \. and \. $L(v) \notin [L(z_2), L(z_3)]$,
 it then follows  from~\eqref{eq:dam-2} that \ts $L(w)$ \ts is  contained in the interval
\. $[L(z_2), L(z_3)]$.
By \eqref{eq:dam-5} this implies that \. $L(w)=L(z_3)-1$.
We conclude that \. $L \in \cF(k+1,\ell+1)$ \. satisfy: 
\begin{alignat*}{3}
	&L(z_1)=1, \quad && L(z_2)=k+2, \quad &&L(z_3)=k+\ell+3, \\
	& L(u)\in [2,k], \quad && L(v)=k+1, \quad  && L(w)=k+\ell+2.
\end{alignat*}
This implies that \. $\aF(k+1,\ell+1)=k-1$.

\smallskip

In summary, for the poset~$P_{k,\ell}$\ts, we have:
\[  \frac{\aF(k,\ell+1) \. \aF(k+1,\ell)}{\aF(k,\ell) \. \aF(k+1,\ell+1)}
\ = \   1+ (k-1) \ell (n-k-\ell-2) \ = \ k\ell n \big(1+o(1)\big)  \ \  \text{as \ $n\to \infty$,}
\]
as desired.
\end{proof}
%When $n \gg k,\ell$, this shows that the bound in Theorem~\ref{thm:CPC-other} is off by a factor of $2\ell$.
%\end{ex}

\bigskip

\section{Final remarks} \label{s:finrem}

\subsection{}\label{ss:finrem-CPC}
The \defng{cross-product conjecture} (Conjecture~\ref{conj:CPC})
has been a major open problem
in the area for the past three decades, albeit with relatively little progress
to show for it (see \cite{CPP1} for the background).  The following quote
about a closely related problem seems applicable:

\smallskip

\begin{center}\begin{minipage}{12cm}%
{\emph{``As sometimes happens, we cannot point to written evidence that
the problem has received much attention; we
can only say that a number of conversations over the last 10 years suggest that the
absence of progress on the problem was not due to absence of effort.''}~\cite[p.~87]{KY}}
\end{minipage}\end{center}

\subsection{}\label{ss:finrem-hist}
Theorem~\ref{thm:Favard} is well-known in the area and can be traced back
to the works of Jean Favard in the early 1930s.\footnote{Ramon van Handel,
personal communication (May 3, 2021).}  Of course, this is not the only
\defng{Favard's inequality} \ts known in the literature.  In fact,
Theorem~\ref{thm:Sch} which goes back to Matsumura (1932) and Fenchel (1936),
seem to also have been inspired by Favard's work.\footnote{Ramon van Handel,
personal communication (June 12, 2023).} In a closely related context of
\defng{Lorentzian polynomials}, Favard's inequality appears in
\cite[Prop.~2.17]{BH}.  For more on Theorem~\ref{thm:Sch},
see \cite[$\S$51]{BF87} and references therein.

\subsection{} \label{ss:finrem-needle}
As we mentioned in the introduction, the \ts $\Ups\ge 1/2$ \ts
lower bound derived from Favard's inequality (Theorem~\ref{thm:Favard})
easily implies the $1/2$ lower bound on the cross product.  Given the
straightforward nature of this implication, one can think of this paper
as the first attempt to finding the best \. $\ve\ge 0$, such that
$$\frac{\aF(k+1,\ell) \. \aF(k,\ell+1)}{\aF(k,\ell)\. \aF(k+1,\ell+1)} \, - \, \frac{1}{2} \ \geq \, \ve.
$$
In this notation, the CPC states that \. $\ve=\frac12$.  Our Main Theorem~\ref{t:main}
and especially the ``$t$-thin or $t$-flat'' Theorem~\ref{t:thin} are the first effective
bounds for~$\ve>0$.  More precisely, here we prove \. $\ve = \Omega\big(\frac{1}{n}\big)$ \.
for all posets, and a constant lower bound on \ts $\ve$ \ts  for posets with bounded parameter~$t$.
Improving these bounds seems an interesting challenging
problem even if the CPC ultimately fails.

We should mention that from the Analysis point of view, the first explicit bound
on \. $\ve>0$ \. is typically an important step, no matter how small.  We refer
to \cite{KLT} and \cite{CGMS} for especially remarkable breakthroughs of this type.

\subsection{} \label{ss:finrem-other}
The constant \ts $1/2$ \ts in Favard's inequality has the same nature as the constant~$2$
in \cite[Cor.~1.5]{RSW} which also follows from Favard's inequality written in terms of 
the Lorentzian polynomials technology.  The
relationships to the constant~2 in \cite[Thm~1.1]{CP3}  and \cite[Thm~5]{HSW} are more
distant, but fundamentally of the same nature.  While in the former case it is tight,
in the latter is likely much smaller, see \cite[$\S$2.3]{Huh}.

\subsection{}\label{ss:finrem-discremancy}
The reader might find surprising the discrepancy between the vanishing and the
nonvanishing cases in the Main Theorem~\ref{t:main}.
Note that the vanishing case actually implies a \emph{worse bound} \eqref{eq:main-thm-2}
compared to the bound \eqref{eq:main-thm-1} in the nonvanishing case, instead of
making things simpler. This is an artifact of the mixed volume inequalities and combinatorial ratios.
Proposition~\ref{p:Fk-bound1} gives a better bound than Proposition~\ref{p:Fk-bound2}
simply because the ratio of \ts $\aF(\cdot,\cdot)$'s \ts in the former is under
a square root which decreases the order. However, these combinatorial bounds can only
be applied when the corresponding terms are nonzero.

Clearly, there is no way to justify this discrepancy, as otherwise we would know
how to disprove the CPC.  Still, one can ask if there is another approach to the
vanishing case which would improve the bound?  We caution the reader that sometimes
nonvanishing does indeed make a difference (see e.g.\ Example~\ref{ex:vanish-unequal}).

\subsection{}\label{ss:finrem-DD}
Theorem~\ref{thm:stanley_vanish} gives the vanishing conditions for the
\defng{generalized Stanley inequalities}.  It was first stated without a proof in
\cite[Thm~8.2]{DD85}, and it seems the authors were aware of a combinatorial
proof by analogy with their proof of the corresponding results for the order polynomial.
The theorem was rediscovered in \cite[Thm~1.12]{CPP}, where it was proved via combinatorics
of words.  Independently, it was also proved in \cite[Thm~5.3]{MS22} by a
geometric argument.

\subsection{}\label{ss:finrem-pin}
There is a large literature on the negative dependence in a combinatorial
context, see e.g.\ \cite{BBL,Huh,Pem}, and in the context of linear
extensions~\cite{KY,She82}.
%\cite[Eq.~$(10)$]{KN}.
When it comes to correlation inequalities for linear extensions of posets,
this paper can be viewed as the third in a series after \cite{CP3} and~\cite{CP4}
by the first two authors.  These papers differ by the tools involved. In~\cite{CP3},
we use the \defng{combinatorial atlas} \ts technology (see~\cite{CP1,CP2}),
while in~\cite{CP4} we use the \defng{FKG-type inequalities}.

The idea of this paper was to use geometric inequalities for mixed volumes,
to obtain new cross product type inequalities.  As we mentioned in the
introduction, it transfers the difficulty to the \defng{combinatorics
of words}.  This is the approach introduced in \cite{Hai,MR} (see also~\cite{Sta-promo}),
and advanced in \cite{CPP1,CPP,CPP2} in a closely related context.

\subsection{} \label{ss:finrem-CS}
Despite the apparent symmetry between the $t$-thin and $t$-flat notions,
there is a fundamental difference between them.  For posets $P=(X, \prec)$
which are $t$-thin with respect to a set~$A$ of bounded size, the number $e(P)$ of linear extension
can be computed in polynomial time, since \ts $P':=(X\sm A,\prec)$ \ts
has width at most~$t$.  On the other hand, for posets which
have bounded height, computing \ts $e(P)$ \ts is  $\SP$-complete \cite{BW,DP},
and the same holds for posets which are $t$-flat with respect to a set 
of bounded size.

\vskip.6cm

\subsection*{Acknowledgements}
We are grateful to Jeff Kahn, Yair Shenfeld and
Ramon van Handel for many helpful discussions and remarks on the subject,
and to Julius~Ross for telling us about~\cite{RSW}.  We are thankful to
Per Alexandersson for providing {\tt Mathematica} packages to test the conjectures.
The first author was partially supported by the Simons Foundation.
The second and third authors were partially supported by the~NSF.

%\vskip.9cm

\newpage

%%%%%%%%%%%%%%%%%%%%%%%%%%%%%%%%%%%%%%%%%%%%%%%%%%%%%%%%%%%%%%%%%%%%%%%%%%%%%
%%%%%%%%%%%%%%%%%%%%%%%%%%%%%%%%%%%%%%%%%%%%%%%%%%%%%%%%%%%%%%%%%%%%%%%%%%%%%
%%%%%%%%%%%%%%%%%%%%%%%%%%%%%%%%%%%%%%%%%%%%%%%%%%%%%%%%%%%%%%%%%%%%%%%%%%%%%
%%%%%%%%%%%%%%%%%%%%%%%%%%%%%%%%%%%%%%%%%%%%%%%%%%%%%%%%%%%%%%%%%%%%%%%%%%%%%
%%%%%%%%%%%%%%%%%%%%%%%%%%%%%%%%%%%%%%%%%%%%%%%%%%%%%%%%%%%%%%%%%%%%%%%%%%%%%
%%%%%%%%%%%%%%%%%%%%%%%%%%%%%%%%%%%%%%%%%%%%%%%%%%%%%%%%%%%%%%%%%%%%%%%%%%%%%
%%%%%%%%%%%%%%%%%%%%%%%%%%%%%%%%%%%%%%%%%%%%%%%%%%%%%%%%%%%%%%%%%%%%%%%%%%%%%
%%%%%%%%%%%%%%%%%%%%%%%%%%%%%%%%%%%%%%%%%%%%%%%%%%%%%%%%%%%%%%%%%%%%%%%%%%%%%
%%%%%%%%%%%%%%%%%%%%%%%%%%%%%%%%%%%%%%%%%%%%%%%%%%%%%%%%%%%%%%%%%%%%%%%%%%%%%


\begin{thebibliography}{abcdefghi}


% \bibitem[Ale50]{Ale}
% Alexander~D.~Alexandrov, \emph{Convex polyhedra} (translated from the 1950 Russian original),
% Springer, Berlin, 2005, 539~pp.

\bibitem[AFO14]{AFO}
Shiri~Artstein-Avidan, Dan~Florentin and Yaron~Ostrover,
Remarks about mixed discriminants and volumes,
\emph{Commun.\ Contemp.\ Math.}~\textbf{16} (2014), no.~2, 1350031, 14~pp.

\bibitem[BF87]{BF87}
Tommy~Bonnesen and Werner~Fenchel, \emph{Theory of convex bodies}
(originally published in 1934 in German),
BCS Associates, Moscow, ID, 1987, 172~pp.


\bibitem[BBL09]{BBL}
Julius~Borcea, Petter~Br\"and\'en and Thomas~M.~Liggett,
Negative dependence and the geometry of polynomials,
\emph{J.~Amer.\ Math.\ Soc.}~\textbf{22} (2009), 521--567.

\bibitem[BH20]{BH}
Petter~Br\"and\'en and  June~Huh, Lorentzian polynomials,
\emph{Annals of Math.}~\textbf{192} (2020), 821--891.

\bibitem[BLP23]{BLP}
Petter~Br\"and\'en, Jonathan~Leake and Igor~Pak,
Lower bounds for contingency tables via Lorentzian polynomials,
\emph{Israel J.~Math.}~\textbf{253} (2023), 43--90.

\bibitem[BGL18]{BGL}
Silouanos~Brazitikos, Apostolos~Giannopoulos and Dimitris-Marios~Liakopoulos,
Uniform cover inequalities for the volume of coordinate sections and projections of convex bodies,
\emph{Adv.\ Geom.}~\textbf{18} (2018), 345--354.

\bibitem[BFT95]{BFT}
Graham~Brightwell, Stefan~Felsner and William~T.~Trotter,
Balancing pairs and the cross product conjecture,
\emph{Order}~\textbf{12} (1995), 327--349.

\bibitem[BW91]{BW}
Graham~Brightwell and Peter~Winkler,
Counting linear extensions,
\emph{Order}~\textbf{8} (1991), 225--247.

\bibitem[BuZ88]{BZ-book}
Yuri~D.~Burago and  Victor~A.~Zalgaller,
\emph{Geometric inequalities},
Springer, Berlin, 1988, 331~pp.

\bibitem[CGMS23]{CGMS}
Marcelo Campos, Simon Griffiths, Robert Morris and Julian Sahasrabudhe,
An exponential improvement for diagonal Ramsey, preprint (2023), 57~pp.;
{\tt arXiv:2303.09521}

\bibitem[CP21]{CP1}
Swee~Hong~Chan and Igor~Pak, Log-concave poset inequalities, preprint (2021), 71 pp;
{\tt arXiv:2110.} {\tt 10740}.

\bibitem[CP22a]{CP2}
Swee~Hong~Chan and Igor~Pak, Introduction to the combinatorial atlas,
\emph{Expo.\ Math.}~\textbf{40} (2022), 1014--1048.

\bibitem[CP22b]{CP3}
Swee~Hong~Chan and Igor~Pak, Correlation inequalities for linear extensions, preprint (2022), 23 pp;
{\tt arXiv:2211.16637}.

\bibitem[CP23]{CP4}
Swee~Hong~Chan and Igor~Pak, Multivariate correlation inequalities for $P$-partitions,
\emph{Pacific J.~Math.} \textbf{323} (2023), 223--252.


\bibitem[CPP22a]{CPP1}
Swee~Hong~Chan, Igor~Pak and Greta~Panova,
The cross--product conjecture for width two posets,
\emph{Trans.\ AMS}~\textbf{375} (2022), 5923--5961.

\bibitem[CPP22b]{CPP}
Swee~Hong~Chan, Igor~Pak, and Greta Panova, Effective poset inequalities,
to appear in \emph{SIAM J.\ Discrete Math.};
preprint (2022), 36~pp; \ts
{\tt arXiv:2205.02798}.

\bibitem[CPP23]{CPP2}
Swee~Hong~Chan, Igor~Pak and Greta~Panova,
Extensions of the Kahn--Saks inequality for posets of width two,
\emph{Combinatorial Theory}~\textbf{3} (2023), no.~1, Paper No.~8, 35~pp.

\bibitem[DD85]{DD85}
David~E.~Daykin and Jacqueline~W.~Daykin,
Order preserving maps and linear extensions of a finite poset,
\emph{SIAM J.\ Algebraic Discrete Methods}~\textbf{6} (1985), 738--748.

\bibitem[DP18]{DP}
Samuel~Dittmer and Igor~Pak,
Counting linear extensions of restricted posets, preprint (2018), 33~pp.; \ts
{\tt arXiv:1802.06312}.

%\bibitem[Fre75]{Fre}
%Michael~L.~Fredman, How good is the information theory bound in sorting?,
%\emph{Theoret.\ Comput.\ Sci.}~\textbf{1} (1975), 355--361.

% \bibitem[HG20]{HW}
% Daniel~Hug and Wolfgang~Weil, \emph{Lectures on convex geometry},
% Springer, Cham, 2020, 287~pp.

\bibitem[Hai92]{Hai}
Mark~D.~Haiman,
Dual equivalence with applications, including a conjecture of Proctor,
\emph{Discrete Math.}~\textbf{99} (1992), 79--113.

\bibitem[Huh18]{Huh}
June~Huh,
Combinatorial applications of the Hodge--Riemann relations, in
\emph{Proc.\ ICM Rio de Janeiro}, vol.~IV, World Sci.,
Hackensack, NJ, 2018, 3093--3111.

\bibitem[HSW22]{HSW}
June~Huh, Benjamin~Schr\"oter and Botong~Wang,
Correlation bounds for fields and matroids,
\emph{Jour.\  Eur.\ Math.\ Soc.}~\textbf{24} (2022), 1335--1351.

%\bibitem[KN12]{KN}
%Jeff~Kahn and Michael~Neiman,
%Conditional negative association for competing urns,
%\emph{Random Structures Algorithms}~\textbf{41} (2012), 262--281.

\bibitem[KS84]{KS}
Jeff~Kahn and Michael~Saks,
Balancing poset extensions,
\emph{Order}~\textbf{1} (1984), 113--126.

\bibitem[KY98]{KY}
Jeff~Kahn and Yang Yu,
Log-concave functions and poset probabilities,
\emph{Combinatorica}~\textbf{18} (1998), 85--99.

\bibitem[KLT00]{KLT}
Nets~Hawk~Katz, Izabella \L aba and Terence~Tao,
An improved bound on the Minkowski dimension of Besicovitch sets in~$\rr^3$,
\emph{Annals of Math.}~\textbf{152} (2000), 383--446.


% \bibitem[Kis68]{Kis}
% Sergey~S.~Kislitsyn, A finite partially ordered set and its
% corresponding set of permutations,
% \emph{Math.\ Notes}~\textbf{4} (1968), 798--801.


\bibitem[MS22]{MS22}
Zhao~Yu~Ma and Yair~Shenfeld,
The extremals of Stanley's inequalities for partially ordered sets,
preprint (2022), 53~pp.; {\tt arXiv:2211.14252}.

\bibitem[MR94]{MR}
Claudia~Malvenuto and Christophe~Reutenauer, Evacuation of labelled graphs,
\emph{Discrete Math.}~\textbf{132} (1994), 137--143.

\bibitem[Pem00]{Pem}
Robin~Pemantle,
Towards a theory of negative dependence,
\emph{J.~Math.\ Phys.}~\textbf{41} (2000), 1371--1390.

\bibitem[RSW23]{RSW}
Julius~Ross, Hendrick~Suss and Thomas~Wannerer,
A note on Lorentzian polyomials and the Kahn--Saks inequality,
preprint (2023), 3~pp.

\bibitem[Sch14]{Sch}
Rolf~Schneider,
\emph{Convex bodies: the Brunn--Minkowski theory}
(second edition),  Cambridge Univ.\ Press, Cambridge, UK, 2014, 736~pp.

\bibitem[SvH20]{SvH}
Yair~Shenfeld and Ramon~van~Handel,
The extremals of the Alexandrov--Fenchel inequality for convex polytopes,
\emph{Acta Math.}, to appear, 82~pp.; \ts {\sf arXiv:2011.04059}.

\bibitem[She82]{She82}
Lawrence~A.~Shepp,
The XYZ conjecture and the FKG inequality,
\emph{Ann.\ Probab.}~\textbf{10} (1982), 824--827.

\bibitem[SZ16]{SZ}
Ivan~Soprunov and Artem~Zvavitch,
Bezout inequality for mixed volumes, \emph{Int.\ Math.\ Res.\ Not.}~\textbf{2016},
no.~23, 7230--7252.

\bibitem[Sta81]{Sta}
Richard~P.~Stanley,
Two combinatorial applications of the Aleksandrov--Fenchel inequalities,
\emph{J.~Combin.\ Theory, Ser.~A} \textbf{31}  (1981), 56--65.

\bibitem[Sta09]{Sta-promo}
Richard~P.~Stanley,
Promotion and evacuation, \emph{Electron.\ J.\ Combin.}~\textbf{16} (2009),
no.~2, RP~9, 24~pp.

\bibitem[Tro95]{Tro}
Wiliam~T.~Trotter, Partially ordered sets, in \emph{Handbook of combinatorics}, vol.~1, Elsevier,
Amsterdam, 1995, 433--480.

% \bibitem[vH21]{RvH}
% Ramon van Handel, Shephard's inequalities, Hodge--Riemann relations, and a conjecture of Fedotov, preprint (2021), 14 pp;
% {\tt arXiv:2109.05169}.

\end{thebibliography}
\end{document}